\newtheorem{thm}{Theorem}[section]
\newtheorem{cor}[thm]{Corollary}
\newtheorem{lem}[thm]{Lemma}
\newtheorem{prop}[thm]{Proposition}
\theoremstyle{definition}
\newtheorem{defn}[thm]{Definition}
\theoremstyle{remark}
\newtheorem{rem}[thm]{Remark}
\numberwithin{equation}{section}
\newtheorem{ex}[thm]{\bf Example}
\newcommand{\norm}[1]{\left\Vert#1\right\Vert}
\def\star{^{*}}
\def\L2{L^{2}}
\def\undemi{\frac{1}{2}}
\def\M{\mathcal{M}}
\def\E{\mathcal{E}}
\def\B{\mathcal{B}}
\def\D{\mathcal{D}}
\def\N{\Bbb{N}}
\def\C{\Bbb{C}}
\def\e{\varepsilon}
\def\m1{^{-1}}
\def\C{\mathcal{C}}
\def\H{\mathcal{H}}
\def\F{\mathcal{F}}
\def\J{\mathcal{J}}
\def\K{\mathcal{K}}
\def\o{\omega}
\def\O{\Omega}
\def\P{\mathcal{P_+}}
\begin{document}

\title[]{Variations in noncommutative potential theory: finite energy states, potentials and multipliers.}%
\author{Fabio Cipriani, Jean-Luc Sauvageot}%
\address{Dipartimento di Matematica, Politecnico di Milano,
piazza Leonardo da Vinci 32, 20133 Milano, Italy.}%
\email{fabio.cipriani@polimi.it}%
\address{Institut de Math\'ematiques, CNRS-Universit\'e Pierre et Marie
Curie Universit\'e Paris VII, boite 191, 4 place Jussieu, F-75252 Paris Cedex 05}
\email{jlsauva@math.jussieu.fr}
\thanks{}
\subjclass{}%
\keywords{Dirichlet space, finite-energy functional, potential, carr\'{e} du champ, Dirichlet space multiplier.}%

\date{July 15, 2012}
\dedicatory{Dedicated to Gabriel Mokobodzki}

\begin{abstract}

In this work we undertake an extension of various aspects of the potential theory of Dirichlet forms from locally compact spaces to noncommutative C$^*$-algebras with trace. In particular we introduce finite-energy states, potentials and multipliers of Dirichlet spaces. We prove several results among which the celebrated Deny's embedding theorem and the Deny's inequality, the fact that the carr\'{e} du champ of bounded potentials are finite-energy functionals and the relative supply of multipliers.

\end{abstract}

\maketitle
\section{Introduction and description of the results.}

In the present work we develop further the potential theory of Dirichlet forms on noncommutative C$^*$-algebras with trace. We introduce and investigate {\it finite-energy states, potentials and multipliers}, objects naturally associated to Dirichlet spaces and which are meant to encode or reveal the geometric nature of the latter.
\vskip0.1truecm\noindent
In a companion work the results here obtained will be crucial to construct on C$^*$-algebras endowed with a Dirichlet form, the building blocks of a metric differential geometry (Dirac operators and Spectral Triples) and topological invariants (summable Fredholm modules in K-homology) in the framework of the Noncommutative Geometry developed by A. Connes \cite{Co}.
\vskip0.1truecm\noindent
Classical potential theory, studying harmonic functions on Euclidean spaces $\mathbb{R}^n$, finite-energy measures and their potentials, was based on the properties of kernel $|x-y|^{-1}$, the so called Green function, to understood as the integral kernel of the inverse of the Laplace operator (see \cite{Bre}, \cite{Ca}, \cite{Do}).
\par\noindent
In the late fifties, A. Beurling and J. Deny outlined, in two seminal papers \cite{BeDe1}, \cite{BeDe2}, the way to develop a kernel-free potential theory on locally compact Hausdorff spaces $X$. There, the central role was no more played by the Green function, but rather by quadratic forms which posses the fundamental {\it Markovian contraction property}
\begin{equation}
\E[a\wedge 1]\le \E[a]\, ,
\end{equation}
generalizing the Dirichlet integral of Euclidean spaces
\[
\E_{\mathbb{R}^n}[a]=\int_{\mathbb{R}^n}\, |\nabla a|^2\, dm\, .
\]
The second fundamental property these quadratic forms are required to have is {\it lower semicontinuity} on the algebra $C_0 (X)$.
Lower semicontinuity is a reminiscence of the fact that Dirichlet forms may represent energy functionals of physical systems (distributions of electric charges or quantum spinless particles in the ground state representation, for example). On the other hand this property allows, by a result of G. Mokobodzki \cite{Moko}, to extend the quadratic form to a lower semicontinuous form on the Hilbert spaces $L^2 (X,m)$, with respect to a wide family of Borel measures $m$ on $X$, giving rise to a positive self-adjoint generator $L$ of a Markovian semigroup $e^{-tL}$ on $L^2 (X,m)$
\[
\E[a]=\|L^{1/2} a\|^2_{L^2 (X,m)}
\]
Semigroups in this class are precisely the symmetric, strongly continuous, contractive, positivity preserving semigroups on $L^2 (X,m)$ which extend to weakly$^*$-continuous, contractive, positivity preserving semigroups on $L^\infty (X,m)$, symmetric with respect to the measure $m$.
\par\noindent
The $L^2$-theory is particularly interesting from at least two points of view. The first is that, as noticed by A. Beurling and J. Deny, there exists a one to one correspondence between Dirichlet forms and Markovian semigroups on $L^2 (X,m)$. The second is that these objects are also in one to one correspondence with Hunt's Markov stochastic processes $(\mathbb{E}_x , \omega_t )$ on $X$, which are symmetric with respect to $m$
\[
(e^{-tL} f)(x)=\mathbb{E}_x (f(\omega_t))\qquad x\in X\, ,\quad t\in [0,+\infty )\, .
\]
The third requirement a Dirichlet form $\E$ on $L^2 (X,m)$ has to satisfies is called {\it regularity}, and concerns the existence of a form core which is also a dense sub-algebra of $C_0(X)$. This allows to develop a rich theory of finite-energy measure and their potentials and, in particular, the construction of a Choquet capacity on the space $X$. Sets having vanishing capacity can be considered to be negligible from the point of view of Potential Theory and M. Fukushima made a crucial use of them to construct the essentially unique Hunt's process on $X$ associated to the regular Dirichlet form (see \cite{F1}, \cite{F2}, \cite{FOT}).
\vskip0.1truecm\noindent
The idea to generalize the notion of Markovian semigroup to C$^*$-algebras $A$ more general than the commutative ones, which are necessarily of type $C_0 (X)$, arose in Quantum Field Theory when L. Gross [G1], [G2] approached the problem of the existence and uniqueness of the ground state of an assembly of $\frac{1}{2}$spin particles, in terms of certain hypercontractivity properties of the Markovian semigroup on the Clifford C$^*$-algebra of an infinite dimensional (one-particle) Hilbert space, generated by the Hamiltonian operator.
\par\noindent
Later, S. Albeverio and R. Hoegh-Krhon [AHK1] introduced Dirichlet forms on C$^*$-algebras with trace $(A,\tau)$ as closed, quadratic forms on the G.N.S. Hilbert space $L^2 (A,\tau)$, satisfying a suitable contraction property generalizing (1.1) and having a form core which is a dense sub-algebra of $A$. They also generalized the Beurling-Deny correspondence between Dirichlet forms and Markovian semigroups on $L^2 (A,\tau)$. This theory was subsequently developed by J.-L. Sauvageot [S2], E.B. Davies and M. Lindsay [DL]. Applications were found in Riemannian Geometry by E. B. Davies and O. Rothaus [DR1,2] to spectral bounds for the Bochner Laplacian and in Noncommutative Geometry by J.-L. Sauvageot [S3,4] to the transverse heat semigroup on the C$^*$-algebra of a Riemannian foliation.
\vskip0.1truecm\noindent
The discovery of the differential calculus underlying the structure of Dirichlet forms [S2], [CS1], allows to represent them as
\[
\E[a]=\|\partial a\|_\H^2
\]
in terms of an essentially unique derivation $\partial$ on $A$ taking its values in a Hilbert $A$-bimodule $\H$. The derivation thus appears as a differential square root of the generator
\[
L=\partial^*\circ\partial\, .
\]
This differential calculus allowed a potential theoretic characterization of Riemannian manifolds having a positive curvature operator as those for which the semigroup generated by the Dirac Laplacian on the Clifford C$^*$-algebra is Markovian \cite{CS3}.
\vskip0.1truecm\noindent
Among the others applications of Dirichlet forms and their differential calculus on a C$^*$-algebra with trace, we mention the use made by D. Voiculescu \cite{V1}, \cite{V2} and Ph. Biane [Bi] in Free Probability Theory to define and investigate Free Entropy and the recent appearance in K-theory of Banach algebras \cite{V3} and in K-homology of fractals \cite{CGIS1}, \cite{CGIS2}.
\par\noindent
Derivations and their associated Markovian semigroups and resolvent has been used by J. Peterson to approach $L^2$-rigidity in von Neumann algebras \cite{Pe1}, \cite{Pe2} to characterize  von Neumann algebras having the property T (a generalization of the Kazhdan property T for groups) and by Y. Dabrowski to prove the property non-$\Gamma$ of von Neumann algebras generated by noncommuting self-adjoint generators under finite nonmicrostates free Fisher information, still in the framework of D. Voiculescu Free Entropy theory \cite{Da}. Markov semigroups and Dirichlet forms appear in connection with L\'{e}vy's processes on Compact Quantum Groups \cite{CFK}.
\vskip0.1truecm\noindent
The paper is organized as follows. In Section 2 we recall the basic definitions and properties of Dirichlet forms $\E$, their Dirichlet spaces $\F$, Markovian semigroups and resolvents on C$^*$-algebras with traces. In Section 3 we introduce finite-energy functionals and potentials associated to Dirichlet spaces. We prove a correspondence between these classes of objects, the positivity of potentials and a version of a ''noncommutative maximum principle". As an important tool, we introduce the fine C$^*$-algebra $\C$, intermediate among the C$^*$-algebra $A$ and the von Neumann algebra $\M$, to which finite-energy functionals automatically extend. The section contains also a detailed discussions of a class of examples on the reduced C$^*$-algebra $C^*_{red} (G)$ of a locally compact group associated to negative definite functions on them. In Section 4 we provide a version, in our noncommutative framework, of a Deny's embedding theorem by which the Dirichlet space $\F$ can be continuously embedded in the G.N.S. space $L^2 (A,\omega)$ of any finite-energy state $\omega$ whose potential is bounded. We prove also a version of the Deny's inequality. In Section 5, making use of the canonical differential calculus associated to Dirichlet spaces, we recall the definition of energy functionals or carr\'{e} du champ $\{\Gamma[a]\in A^*_+ :a\in\F\}$ associated to a Dirichlet space and we show that the energy functional $\Gamma [G]$ of bounded potential $G\in\P$ is a finite-energy functional. In the last Section 6, we introduce {\it multipliers} of a Dirichlet space and show that bounded potentials $g\in\P$ whose energy functional $\Gamma[g]$ has a bounded potential $G(\Gamma[g])\in\P$ is a multiplier. This show a relative abundance of multipliers and, in particular, that bounded potentials can be approximated by potentials that are also multipliers.
\vskip0.1truecm\noindent
The content of this work has been the subject of the following talks: Workshop ''Noncommutative Potential Theory'' Besan\c{c}on January 2011, GDRE-GREFI-GENCO Meeting Institut H. Poincar\'{e} Paris June 2012, INDAM Meeting ''Noncommutative Geometry, Index Theory and Applications'' Cortona-Italy, June 11-15 2012.

\section{Dirichlet forms on C$^*$-algebras}

In this section we summarize the main definitions and some fundamental results of the theory of noncommutative Dirichlet forms on C$^*$-algebras with trace, for which one may refer to [AHK], [C2], [CS1], [DL].
\subsection{C$^*$-algebras, traces and their standard forms}
Let us denote by $(A,\tau)$ a separable C$^*$-algebra $A$ and a densely defined, faithful, semifinite, lower semicontinuous, positive trace on it.
\par\noindent
 We denote by $L^2 (A, \tau )$ the Hilbert space of the
Gelfand--Naimark--Segal (G.N.S.) representation $\pi_\tau$ associated to $\tau$, and by $\M$ or $L^\infty (A, \tau )$ the von Neumann algebra ${\pi_\tau (A)}^{\prime\prime}$ in $\mathbb{B} (L^2 (A, \tau ))$ generated by $A$ through the G.N.S. representation.
\par\noindent
When unnecessary, we shall not distinguish between $\tau$ and its canonical normal extension on $\M$, between elements of $A$ and their representation in $\M$ as a bounded operator in $L^2 (A, \tau )$, nor between elements $a$ of $A$ or $\M$ which are square integrable, in the sense that $\tau(a^* a)<+\infty$, and their canonical image in
$L^2(A, \tau )$.
\par\noindent
Then $\norm a$ stands for the uniform norm of $a$ in $A$ or in $\M$, $\norm \xi _2$ or $\norm \xi _{L^2(A, \tau )}$ for the norm of $\xi\in L^2(A, \tau )\,$ and $1_\M$ for the unit of $\M$.
\par\noindent
As usual $A_+$, $\M_+$ or $L^\infty_+ (A, \tau )$ and $L^2_+ (A, \tau )$ will denote the positive part of $A$, $\M$ and $L^2(A, \tau )$ respectively.
\par\noindent
Recall that ($\M, L^2(A, \tau ), L^2_+(A, \tau )$) is a {\it standard form} of the von Neumann algebra $\M$ (see \cite{Ara}). In particular $L^2_+(A, \tau )$ is a self-polar, closed convex cone in $L^2(A, \tau )$, inducing an anti-linear isometry (the modular conjugation) $J$ on $L^2(A, \tau )$ which is an extension of the involution $a\mapsto a^*$ of $\M$. The subspace of $J$-invariant elements (called {\it real}) will be denoted by $L^2_h(A, \tau )$ (cf. [Dix]). Any element $\xi\in L^2 (A,\tau)$ can written uniquely as $\xi=\xi_r+i\xi_i$ for real elements $\xi_r,\xi_i\in L^2_h (A,\tau)$ and any real element $\xi\in L^2_h (A,\tau)$ can written uniquely as $\xi=\xi_+ - \xi_-$ for orthogonal positive elements $\xi_\pm\in L^2_+ (A,\tau)$, called the positive and negative parts. Recall that $\xi_+$ is the Hilbert projection of $\xi\in L^2_h (A,\tau)$ onto the closed convex set $L^2_+(A, \tau )$. For a real element $\xi\in L^2_h (A,\tau)$, the positive element $|\xi|:=\xi_+ +\xi_-\in L^2_+ (A,\tau)$ will be called the modulus of $\xi$.
\par\noindent
Whenever $\xi\in L^2_h (A, \tau )$ is real, the symbol $\xi\wedge 1$ will denote its Hilbert projection onto the closed and convex subset  $C$ of $L^2_h (A, \tau )$ obtained as the $L^2$--closure of $\{a\in A\cap L^2 (A,\tau): a\le 1_\M\}$.

\subsection{${\rm C}^*$--Dirichlet forms, Dirichlet spaces and Dirichlet algebras}

Let $\mathbb{M}_n (\mathbb{C})$ be, for $n\ge 1$, the C$^*$--algebra of $n\times n$ matrices with complex entries, $1_n$ its unit, I$_n$ its identity automorphism and $tr_n$ its normalized trace. For every $n\ge 1$, we will indicate by $\tau_n$ the trace $\tau\otimes tr_n$ of the C$^*$--algebra $\mathbb{M}_n (A)=A\otimes \mathbb{M}_n (\mathbb{C})$ of $n\times n$ with entries in $A$.
\vskip0.2truecm\noindent
The main object of our investigation is the class of C$^*$--{\it Dirichlet forms} on $L^2 (A, \tau )$ whose definition we recall here (cf. [AHK], [DL], [C1], [CS1]).
\begin{defn}[C$^*$-Dirichlet forms]
A closed, densely defined, nonnegative quadratic form $(\E,\F)$ on $L^2(A, \tau )$ is said to be:
\vskip0.1truecm\noindent
\item{i)} {\it real} if
\begin{equation}
J(\xi)\in \F,\quad \E [J(\xi)]=\E [\xi]\qquad  \xi\in \F\, ,
\end{equation}
\item{ii)} a {\it Dirichlet form} if it is real and {\it Markovian} in the sense that
\begin{equation}
\xi\wedge 1\in \F,\quad \E [\xi\wedge 1 ]\le\E [\xi]\qquad  \xi\in\F\cap L^2_h(A, \tau )\, ,
\end{equation}
\item{iii)} a {\it completely Dirichlet form} if the canonical extension
$(\E_n ,\F_n ))$ to $L^2 (\mathbb{M}_n (A), \tau_n )$
\begin{equation}
\E_n [[\xi_{i,j}]_{i,j=1}^n)] :=\sum_{i,j=1}^n \E[\xi_{i,j}]\qquad
[\xi_{i,j}]_{i,j=1}^n\in \F_n:= \mathbb{M}_n (\F)\, ,
\end{equation}
is a Dirichlet form for all $n\ge 1$\, ,
\item{iv)} a C$^*$-{\it Dirichlet form} if it is a completely Dirichlet form which is {\it regular} in the sense that the subspace $\B := A\cap \F$ is dense in the C$^*$--algebra A and is a {\it form core} for $(\E ,\F)$.
\end{defn}\noindent
Notice that, in general, the property
\[
|\xi|\in \F,\quad \E [\,|\xi|\, ]\le\E [\xi]\qquad  \xi\in \F\cap L^2_h(A, \tau )
\]
is a consequence of the property (2.2) and that it is actually equivalent to it
when $\tau$ is finite, the cyclic and separating vector $\xi_\tau$ representing $\tau$ belongs to $\F$ and $\E [\xi_\tau]=0$ (see [C1]).
\par\noindent
\begin{rem}
Even if in this paper we formulate the results in the setting of the G.N.S. standard form of $(A,\tau)$, they can be equivalently stated and proved in a general standard form of $(A,\tau)$ (see [C1]). This may be an important advantage when considering specific examples where an {\it ad hoc} standard form can be more manageable that the G.N.S. one.
\end{rem}

\vskip0.2truecm
\centerline{{\it To simplify notations, in the rest of the paper}}
\centerline{{\it"Dirichlet form" will always mean C$^*$-Dirichlet form.}}
\vskip0.2truecm

We will denote by $(L, D(L)$ the densely defined, self-adjoint, nonnegative operator on $L^2(A,\tau)$ associated with the closed quadratic form $(\E ,\F)$
\begin{equation}
\E[\xi]=||L^{1/2}\xi||^2\qquad \xi\in \F=\D(L^{1/2})\, .
\end{equation}
This operator is the generator of the strongly continuous, contractive semigroup $\{e^{-tL} :t\ge 0\}$ on the Hilbert space $L^2 (A,\tau)$. This semigroup is Markovian in the sense that it is positivity preserving and extends to a weakly$^*$-continuous semigroup of contractions on the von Neumann algebra $\M$. By duality and interpolation this semigroup extends also as a strongly continuous, positivity preserving, contractive semigroup on the noncommutative $L^p$-space $L^p (A,\tau)$ for each $p\in [1,+\infty]$.
\vskip0.2truecm\noindent
As practice, several aspects of potential theory are more easily managed working with the resolvent family $\{(I+\e L)^{-1} :\e\ge 0\}$ than using the semigroup itself. in particular, we will make use of the following obvious properties.
\begin{lem} 
For $\e>0$, the resolvent $(I+\e L)^{-1}$ is a symmetric contraction in $L^2(A,\tau)$ which operates as a $\sigma$-weakly continuous, completely positive, contraction of the von Neumann algebra $\M$ and converges strongly to the identity on $\F$.
\end{lem}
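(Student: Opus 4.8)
The plan is to base everything on the Laplace--transform representation
\begin{equation*}
(I+\e L)\m1=\int_0^{+\infty}e^{-s}\,e^{-s\e L}\,ds\, ,\qquad \e>0\, ,
\end{equation*}
which realizes each resolvent operator as an average, with respect to the probability measure $e^{-s}\,ds$ on $[0,+\infty)$, of the semigroup operators $e^{-s\e L}$. First I would justify this identity: since $(L,\D(L))$ is nonnegative and self-adjoint it follows from the functional calculus together with the scalar formula $\frac{1}{1+\e\lambda}=\int_0^{+\infty}e^{-s}e^{-s\e\lambda}\,ds$ ($\lambda\ge 0$), the right-hand side being a strongly convergent Bochner integral of uniformly bounded operators on $L^2(A,\tau)$. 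Once this is in place, each of the three assertions of the lemma is obtained by transporting the corresponding property of the operators $e^{-s\e L}$ through this average.

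The $L^2$-part is then immediate: $(I+\e L)\m1=f(L)$ with $0<f(\lambda)=(1+\e\lambda)\m1\le 1$, hence $(I+\e L)\m1$ is self-adjoint and a contraction on $L^2(A,\tau)$. For the action on $\M$ I would recall that each $e^{-s\e L}$ extends to a $\sigma$-weakly continuous contraction of $\M$, completely positive because $(\E,\F)$ is in particular a completely Dirichlet form, so that the amplifications $e^{-s\e L}\otimes I_n$ are positivity preserving on $\M\otimes\mathbb{M}_n(\mathbb{C})$. Averaging, $a\mapsto\int_0^{+\infty}e^{-s}\,e^{-s\e L}(a)\,ds$ defines on $\M$ a completely positive, norm-contractive map which agrees with $(I+\e L)\m1$ on the $\sigma$-weakly dense $*$-subalgebra $A\cap L^2(A,\tau)$, hence on all of $\M$. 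To obtain $\sigma$-weak continuity I would pass to the predual: by the stated $L^p$-interpolation the semigroup is also a strongly continuous contraction semigroup on $L^1(A,\tau)$, so $(I+\e L)\m1$ is a linear contraction of $L^1(A,\tau)$, and, $\tau$ and the resolvent being symmetric, the map on $\M$ just described is exactly its Banach-space adjoint on $\M=L^1(A,\tau)^*$, hence $\sigma(\M,L^1(A,\tau))$-continuous.

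For the last assertion, note that $(I+\e L)\m1$ and $L^{1/2}$ commute, both being Borel functions of $L$; therefore $(I+\e L)\m1$ preserves $\F=\D(L^{1/2})$ and, for $\xi\in\F$,
\begin{equation*}
\bigl\|(I+\e L)\m1\xi-\xi\bigr\|_\F^2=\bigl\|(I+\e L)\m1\xi-\xi\bigr\|_2^2+\bigl\|(I+\e L)\m1(L^{1/2}\xi)-L^{1/2}\xi\bigr\|_2^2\, ,
\end{equation*}
where $\|\,\cdot\,\|_\F^2=\|\,\cdot\,\|_2^2+\E[\,\cdot\,]$. Since $(1+\e\lambda)\m1\to 1$ boundedly on $[0,+\infty)$ as $\eto$, the spectral theorem gives $(I+\e L)\m1\eta\to\eta$ in $L^2(A,\tau)$ for every $\eta\in L^2(A,\tau)$; applying this with $\eta=\xi$ and $\eta=L^{1/2}\xi$ yields $(I+\e L)\m1\to I$ strongly on $\F$ as $\eto$.

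The routine ingredients --- the Bochner/Laplace identity and the elementary spectral estimates --- I would not carry out in detail. The single point genuinely requiring care is the $\sigma$-weak continuity of $(I+\e L)\m1$ on $\M$ together with the compatibility of its $L^2$-, $L^1$- and $\M$-realizations; this is the step I expect to be the main (if mild) obstacle, and it is exactly what the passage to the predual above resolves.
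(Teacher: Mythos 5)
Your proposal is correct. Note, however, that the paper itself supplies no proof of this lemma: it is introduced with the words ``we will make use of the following obvious properties'' and left as a collection of standard facts about Markovian semigroups and their resolvents. Your Laplace-transform representation $(I+\e L)^{-1}=\int_0^{+\infty}e^{-s}e^{-s\e L}\,ds$ is exactly the canonical way to transfer the properties the paper has already recorded for the semigroup (positivity preservation, weak$^*$-continuous contractive extension to $\M$, the $L^p$-scale by duality and interpolation, complete positivity via the matrix ampliations of the completely Dirichlet form) to the resolvent, and each of your three steps is sound: the spectral-calculus argument on $L^2(A,\tau)$, the identification of the averaged map on $\M$ with the Banach adjoint of the $L^1$-contraction (which is the right way to get $\sigma$-weak continuity and to legitimize the agreement on the $\sigma$-weakly dense subalgebra $A\cap L^2(A,\tau)$), and the graph-norm computation for strong convergence on $\F$ as $\e\downarrow 0$. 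The only point worth making explicit if you were to write this out in full is that the integral defining the average on $\M$ must be understood in the weak$^*$ sense (tested against normal functionals), since the semigroup is only weak$^*$-continuous there; your passage to the predual handles precisely this.
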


\begin{defn}[Dirichlet spaces, Dirichlet algebras and their fine C$^*$-algebras]
\par\noindent
The domain $\F$ of the Dirichlet form will called {\it Dirichlet space} when considered as a Hilbert space endowed with its {\it graph norm}
\begin{equation}
||\xi||_\F :=\bigl(\E[\xi]+||\xi||^2_{L^2(A,\tau)}\bigr)^{1/2}\qquad \xi\in\F
\end{equation}
and the scalar product
\begin{equation}
\langle\xi ,\eta\rangle_\F :=\E (\xi ,\eta)+ (\xi ,\eta)_{L^2(A,\tau)}\qquad \xi ,\eta\in\F\, .
\end{equation}
The subspace $\B :=\F\cap A$ is in fact an involutive, sub-algebra of $A$ called the {\it Dirichlet algebra} (see [DL], [C2]). By the regularity assumption, it is dense in the Dirichlet space $\F$ as well in the C$^*$-algebra $A$, with respect to their own topologies.
\vskip0.2truecm\noindent
The subspace $\widetilde \B :=\F\cap \M$ is an involutive sub-algebra of $\M$ called the {\it extended Dirichlet algebra}. It is dense in the Dirichlet space $\F$ as well in the von Neumann algebra $\M$ with respect to its $\sigma$-weak topology.
\vskip0.2truecm\noindent
In our approach to potential theory on noncommutative C$^*$algebras, a distinguished role will be played by the {\it fine C$^*$-algebra} $\mathcal{C}\supseteq A$, closure of the extended Dirichlet algebra $\widetilde \B$ in the norm topology of the von Neumann algebra $\M$. In particular, we will make use of the fact that the Dirichlet form $(\E ,\F)$, originally assumed to be regular with respect to the C$^*$-algebra $A$, is still regular with respect to the larger fine C$^*$-algebra $\mathcal{C}$ (see Section 5 below).
\end{defn}

We conclude this section with three examples of Dirichlet space. In the first we recall the classical Beurling-Deny theory on locally compact spaces $X$, where the C$^*$-algebra $A$ is the commutative algebra $C_0 (X)$ of continuous functions vanishing at infinity endowed with its uniform norm. The second one deals with typical situations in harmonic analysis where the (reduced) group C$^*$-algebra C$^*_{red} (G)$ of a locally compact group $G$ is most of the time noncommutative. The third illustrates the standard Dirichlet form on noncommutative tori.

\begin{ex}[Dirichlet spaces on commutative C$^*$-algebras]
By a fundamental result of I.M. Gelfand (see [Dix]), commutative C$^*$-algebras are of type $C_0 (X)$ for a suitable locally compact, Hausdorff space $X$. In this case, positive maps are automatically completely positive so that positive or Markovian semigroup are automatically completely positive or Markovian and all Dirichlet forms are automatically completely Dirichlet forms. In the commutative case our framework thus coincides with that introduced by A. Beurling and J. Deny [BeDe2] to develop potential theories on locally compact Hausdorff spaces.
\par\noindent
The model Dirichlet form on the Euclidean space $\mathbb{R}^n$ or, more generally, on any Riemmannian manifold $M$ endowed with its Riemannian measure $m$, is the Dirichlet integral
\[
\E[a]=\int_M |\nabla a|^2\, dm\qquad a\in L^2 (M,m)\, .
\]
In this case the trace on $C_0 (M)$ is given by the integral with respect to the measure $m$ and the Dirichlet space is the Sobolev space $H^1 (M)\subset L^2 (M,m)$.
\par\noindent
\par\noindent
Much of the potential theory of Dirichlet forms on locally compact spaces, relies on a notion of smallness for subsets of $X$ called polarity. This can be expresses in terms of a set function called capacity (see \cite{FOT}). In the present noncommutative setting, it will be the fine C$^*$-algebra $C\subseteq\M$ to play the role of the Choquet capacity (see Lemma \ref{finecalculus} below).
\end{ex}

\begin{ex}[Dirichlet spaces on group C$^*$-algebras]

Let $G$ be a locally compact, unimodular group, with unit $e\in G$, whose elements will be denoted by $s,t,\dots$, and let $ds$ be a Haar measure on it.
Denote by $\lambda_G$ its left regular representation on $L^2 (G, ds)$ acting by
\[
(\lambda_G (s)a)(t):=a(s^{-1}t)\qquad s,t\in G\, ,\quad a\in L^2 (G,ds)
\]
and by $C\star_{red}(G)$ its reduced C$\star$-algebra in $\mathbb{B}(L^2(G,ds))$ generated by $\{\lambda_G (s)\in\mathbb{B}(L^2 (G,ds)) : s\in G\}$ (see [Dix]).
More explicitly, for $a,b\in C_c (G)\subseteq C^*_{red}(G)$ their product is defined by convolution
\[
(a\ast b)(s):=\int_G a(t)b(st^{-1})\, dt\quad s\in G
\]
while involution is defined by
\[
(a^*)(s):=\overline{a(s^{-1})}\qquad s\in G\, .
\]
The left regular representation of $G$ extends to a $^*$-representation of the reduced C$^*$-algebra and will be denoted by the same symbol. The functional $C\star_{red}(G)\supseteq C_c (G)\ni a\mapsto a(e)\in\mathbb{C}$ extends to a trace state on $C\star_{red}(G)$ and the associated G.N.S. representation coincides with the left regular representation above. In particular the G.N.S. Hilbert space $L^2 (C\star_{red}(G),\tau )$ can be identified with $L^2 (G,ds)$ and its positive cone with the cone of positive definite, square integrable functions.
\par\noindent
Any positive, conditionally negative definite function $\ell :G\to [0,+\infty)$ (see for example \cite{CCJJV}) gives rise to a regular Dirichlet form
\[
\E_\ell [a] = \int_G |a(s)|^2 \ell(s)\, ds\, ,
\]
with domain the space of those $a$ in $L^2 (G,ds)$ for which the integral converges (see [CS1], [C2]).
\vskip0.2truecm\noindent
Examples of the above framework arise on $\mathbb{Z}^n$, where as negative definite function one can choose the Euclidean length $\ell (k):=|k|$ or its square $\ell (k):=|k|^2$, and on free groups $\mathbb{F}_n$ with $n\in\{1,2,\dots\}$ generators where the most important negative definite functions are the length functions associated to systems of generators (see \cite{Haa1}).

%
\end{ex}

\begin{ex}{\bf Dirichlet forms on noncommutative tori.}
Noncommutative tori are a family of C$^*$-algebras which represent a sort of gymnasium for Noncommutative Geometry [Co].
They are defined, for any fixed irrational $\theta\in [0,1]$, as the universal C$^*$-algebras $A_\theta$ generated by two unitaries $U$ and $V$,
satisfying the relation
\[
VU=e^{2i\pi\theta}UV\, .
\]
The functional $\tau : A_\theta \to\mathbb{C}$ given by
\[
\tau(U^n V^m)=\delta_{n,0}\delta_{m,0} \qquad n,m\in \mathbb{Z}
\]
is a tracial state and the {\it heat semigroup} $\{T_t :t\ge 0\}$ on $A_\theta$ is defined by
\[
T_t(U^nV^m)= e^{-t(n^2+m^2)}U^nV^m \qquad n,m\in
\mathbb{Z}\, .
\]
It is $\tau$-symmetric and the associated Dirichlet form is the closure of the quadratic form given by
\[
\E\Bigl[\sum_{n,m\in\mathbb{Z}}\alpha_{n,m}U^nV^m\Bigr] =
\sum_{n,m\in\mathbb{Z}}(n^2+m^2)|\alpha_{n,m}|^2
\]
defined on the algebra $\{\sum_{n,m\in\mathbb{Z}}\alpha_{n,m}U^nV^m\in A_\theta :[\alpha_{n,m}]_{n,m\in\mathbb{Z}}\in c_c (\mathbb{Z}^2)\}$
\end{ex}

\section{Finite-energy functionals and potentials.}

In this section we introduce two of the main objects of our investigation: the class of finite-energy functionals and the class of potentials of a Dirichlet space. These are generalizations to possibly noncommutative C$^*$-algebras of the corresponding objects introduced by A.Beurling and J. Deny in their work on Dirichlet forms on locally compact spaces [BeDe2].

\begin{defn}[Finite-energy functionals and potentials]

Let $(\E ,\F)$ be a Dirichlet form on the separable C$^*$-algebra $(A,\tau)$ endowed with a densely defined, faithful, semifinite, lower semicontinuous, positive trace.

\begin{itemize}
\item A positive functional $\omega\in A^*_+$ will be said to be a {\it finite-energy functional} if
\begin{equation}
\omega (b)\leq c_\omega \|b\|_\F\qquad b\in\B_+
\end{equation}
for some $c_\omega \ge 0$.
\item An element $\xi\in\F$ will be called a {\it potential} if
\begin{equation}
\langle\xi ,b\rangle_\F\ge 0\qquad b\in\B_+ :=\B\cap L^2_+ (A,\tau)\, .
\end{equation}
\item Let $\omega\in A^*_+$ be a finite-energy functional. By the regularity of the Dirichlet form, in particular the fact that the Dirichlet algebra $\B$ is a form core, the exists a unique element $\xi\in\F$ determined by the the relation
\begin{equation}
\omega (b)=\langle \xi,b\rangle_\F =\E (\xi,b)+(\xi,b)_2\qquad b\in\B\, .
\end{equation}
The element $\xi$ will be called the {\it potential associated with $\omega$} and will be denoted by $G(\omega)$.
\end{itemize}
Thus, finite-energy functionals and their potentials satisfy the relation
\begin{equation}
\omega (b)=\langle G(\omega) ,b\rangle_\F\qquad b\in\B\, .
\end{equation}
Moreover, by the formula above, any finite-energy functional can then be extended to the whole Dirichlet space $\F$, the quantity
\begin{equation}
\E [\omega]:=\E [G(\omega)]=\omega (G(\omega))
\end{equation}
is called the {\it energy content} of $\omega$ and one has $|\omega (b)|\le \sqrt{\E[\omega]} \|b\|_\F$ for all $b\in \F$.
\end{defn}
\par\noindent
The set $\P$ of potentials is, by definition, the polar cone of the positive cone $\F_+ :=\F\cap L^2_+ (A,\tau)$ in the Dirichlet space,
\[
\P := \F_+^\circ =\{\xi\in \F : \langle\xi , \eta\rangle_\F \ge 0\, \,\, {\rm for\,\,all\,\, \eta\in\F_+}\}\, .
\]
We will prove in Proposition \ref{positivity} below that potentials are necessarily positive elements of $L^2_+ (A,\tau)$ so that $\P\subseteq \F_+$ and then $\P\subseteq \P^\circ$.

\begin{ex}[Finite-energy normal functionals] Let $h\in L^2_+ (A,\tau)\cap L^1 (A,\tau)$ and consider the normal positive functional $\omega_h\in\mathcal{M}_{*+}$ defined by
\[
\omega_h (b):=\tau (hb)\qquad b\in \mathcal{M}\, .
\]
Since $h\in L^2 (A,\tau)$ then $\xi:=(I+L)^{-1}h\in\F$ is such that
\[
\big<\xi\,,\,b\big>_\F=(L^{1/2}\xi\,,\,L^{1/2}b)+(\xi\,,\,b) = \tau(hb) \qquad b\in \B\, ,
\]
the vector $\xi\in\F$ is a potential, the normal positive linear form $\omega_h$ is a finite-energy functional, its potential coincides with $\xi$
\[
G(\omega_h)=(I+L)^{-1}h
\]
and its energy content is given by $\E[\omega_h]=\omega_h ((I+L)^{-1}h)=\tau (h(I+L)^{-1}h)$.
\end{ex}


\begin{ex}[Finite-energy functionals and potentials on group C$^*$-algebras]

Let us consider the Dirichlet form on a group algebra C$^*_{red}(\Gamma)$ of a discrete group $\Gamma$ associated to negative definite function $\ell :\Gamma\to [0,+\infty)$, as in Example 2.5,
\[
\E_\ell [a]=\sum_{s\in\Gamma} \ell (s)|a(s)|^2\qquad a\in l^2 (\Gamma)\, .
\]
In this case $\o$ is a finite-energy state on C$^*_{red}(\Gamma)$ if and only if
\[
\sum_{s\in\Gamma} \frac{|\varphi_\omega (s)|^2}{1+\ell (s)} < +\infty
\]
and its potential $G(\omega)$ is given by
\[
G(\omega)(s) = \frac{\varphi_\omega (s)}{(1+\ell (s))}\qquad s\in \Gamma\, ,
\]
where $\varphi_\omega :\Gamma\to\mathbb{C}$ is the normalized, positive definite function associated to the state $\omega$ and defined as $\varphi_\omega (s):=\omega (\delta_s)$ for all $s\in\Gamma$.
In particular the energy content of $\omega$ is equal to
\[
\E_\ell [\omega ]= \E_\ell [G(\omega)] = \sum_{s\in\Gamma} \frac{|\varphi_\omega (s)|^2}{1+\ell (s)}\, .
\]
In other words, since states $\omega$ on $C\star_{red}(\Gamma)$ are characterized by the fact that the associated function $\varphi_\omega$ is positive definite (see [Dix]), potentials $\xi\in \P$ associated to the Dirichlet form $\E_\ell$ have the form
\[
\xi (s)=\frac{\varphi_\xi (s)}{1+\ell (s)}\qquad s\in\Gamma
\]
for some positive definite function $\varphi_\xi :\Gamma\to \mathbb{C}$. Notice that, since $\ell$ is a negative definite function, the function $(1+\ell)^{-1}$ is positive definite so that the potential $\xi$ is a positive definite element of $L^2 (G)$. It will be shown later in this section that positivity of potentials is a general fact valid in all Dirichlet spaces.
\par\noindent
On groups having the Kazhdan property T, all negative definite function are bounded so that the cone of potential associated to any such negative definite function $\ell$ simply coincides with the cone of square integrable, positive definite functions. Richer classes of examples can be found on groups having the Haagerup property, where there exist proper, negative definite functions (see for example \cite{CCJV}).
\vskip0.2truecm\noindent
Suppose that $\Gamma$ has polynomial growth (i.e. by a theorem of M. Gromov, it has a nilpotent subgroup of finite index) so that, with respect a system of generators $S\subset \Gamma$, the associated length function $\ell_S$, assumed to be negative definite, has spherical growth $\sigma_S :\mathbb{N}\to\mathbb{N}$ behaving as $\sigma_S (k)\sim k^{d-1}$ for some $d>1$. If $\Gamma$ is nilpotent, by a theorem of J. Dixmier, the exponent $d$ coincides with the homogeneous dimension $d(\Gamma)$, defined in terms of the relative indexes of its lower central series (see \cite{CCJJV}). Then
\[
\|(1+\ell)^{-1}\|^q_{\ell^q (\Gamma)} = \sum_{s\in\Gamma}(1+\ell (s))^{-q} = \sum_{k\in\mathbb{N}} (1+k)^{-q} \sigma_S (k)<+\infty
\]
for all $q>d$. If $\omega\in A^*_+$ is a (pure) state whose cyclic (irreducible) representation is $l^p (\Gamma)$-integrable for some $2\le p<\frac{2d}{d-1}$, by definition this means that $\varphi\in l^p (\Gamma)$, then, by the H\"older inequality, it is a finite-energy state with respect to the Dirichlet form $\E_l$
\[
\E_\ell [\omega ]= \E_\ell [G(\omega)] = \sum_{s\in\Gamma} \frac{|\varphi_\omega (s)|^2}{1+\ell (s)}\le
\|\varphi_\omega\|_{\ell^p (\Gamma)}\cdot\|(1+\ell)^{-1}\|^q_{\ell^q (\Gamma)} <+\infty\, .
\]
For a specific example one may consider the Heisenberg group which is nilpotent with homogeneous dimension $d(\Gamma)=4$.
\vskip0.2truecm\noindent
As $\ell$ is a negative definite function, so is its square root $\sqrt\ell$. Hence $(1+\sqrt\ell)^{-1}$ is a positive definite, normalized function and there exists a state $\omega_\ell\in A^*_+$ such that $\varphi_{\omega_\ell} (s)=(1+\sqrt\ell (s))^{-1}$ for all $s\in\Gamma$. Since
\[
(1+\sqrt x)^2 \le 2(1+x)\le 2(1+\sqrt x)^2\qquad x>0\, ,
\]
a functional $\omega\in A^*_+$ is a finite-energy state if and only if
\[
\sum_{s\in\Gamma} \frac{|\varphi_\omega (s)|^2}{(1+\sqrt\ell (s))^2}=\sum_{s\in\Gamma} |\varphi_{\omega_\ell} (s)\cdot\varphi_\omega (s)|^2 <+\infty\, .
\]
Notice that $\varphi_{\omega_\ell}\cdot\varphi_{\omega}$ is a coefficient of a cyclic sub-representation of the tensor product $\pi_{\omega_\ell}\otimes\pi_\omega$ of the cyclic representations $(\pi_\ell ,\H_\ell ,\xi_\ell)$ and $(\pi_\omega ,\H_\omega ,\xi_\omega)$ associated to the states $\omega_\ell$ and $\omega$.
Hence if $\omega$ is a finite-energy state, the representation $\pi_{\omega_\ell}\otimes\pi_\omega$ is not disjoint from the left regular representation $\lambda_\Gamma$.
\par\noindent
Moreover, since a state $\omega$ has finite energy with respect to the Dirichlet form generated by a negative definite function $\ell$ if and only if it is a finite energy state with respect to the Dirichlet forms associated to each negative type functions $\lambda^{-2}\ell$ for all $\lambda >0$, we have that the family of normalized, positive definite functions $\{\varphi_\lambda :=\varphi_{\omega_{\lambda^{-2}\ell}}\cdot\varphi_\omega:\lambda >0\}$, explicitly given by
\[
\varphi_\lambda (s)=\frac{\lambda}{\lambda +\sqrt{\ell (s)}}\cdot\varphi_\omega (s)\qquad s\in\Gamma\, ,
\]
generates a family of cyclic representations $\{\pi_\lambda :\lambda >0\}$, contained in the left regular representation $\lambda_\Gamma$ which interpolate between the left regular representation $\lambda_\Gamma$ and the cyclic representation $\pi_\omega$ associated to the finite energy state $\omega$. In fact
\[
\lim_{\lambda\to 0^+} \varphi_\lambda = \delta_e\, ,\qquad \lim_{\lambda\to +\infty} \varphi_\lambda = \varphi_\omega
\]
pointwise.
\end{ex}

Now we prove that finite-energy functionals extends to positive functionals on the fine C$^*$-algebras $\mathcal{C}$. For this we need the following approximation result.

\begin{lem} \label{BdansBtilde}
Let $b\in \widetilde \B$ such that $b^*=b$. Then there exists a sequence of self-adjoint elements $\{b_n\}_{n\in \N}\subset\B$ such that $||b_n-b||_\F\to 0$, $||b_n||\leq ||b||$ and $b_n\to b$ $\sigma$-weakly in $\M$. If $\beta\geq 0$, one can get $b_n\geq 0$ for all $n$.
\end{lem}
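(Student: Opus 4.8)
The plan is to approximate $b$ first in the $\F$-norm by self-adjoint elements of the Dirichlet algebra $\B=A\cap\F$, and then to truncate those by the continuous functional calculus so as to bring the uniform norm down to $\|b\|$ without spoiling the $\F$-convergence. So I would begin from regularity: $\B$ is a form core, hence there are $c_n\in\B$ with $\|c_n-b\|_\F\to0$. Since $\B$ is a $*$-subalgebra of $A$, the element $(c_n+c_n^*)/2$ is again in $\B$; and since the graph norm is $J$-invariant --- $\|J\xi\|_\F^2=\E[J\xi]+\|J\xi\|_2^2=\E[\xi]+\|\xi\|_2^2$ by reality of $\E$ and the fact that $J$ is an anti-linear isometry --- while $Jb=b^*=b$, one has $\|(c_n+c_n^*)/2-b\|_\F\le\|c_n-b\|_\F\to0$. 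Thus we may assume $c_n=c_n^*$ from the start.

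Next, set $M:=\|b\|$ (the case $b=0$ being trivial) and let $\phi(t):=(t\wedge M)\vee(-M)$ be the median of $-M,t,M$: a normal contraction fixing $0$ with $|\phi|\le M$. I would put $b_n:=\phi(c_n)$, the continuous functional calculus of the self-adjoint $c_n\in A$; then $b_n\in C^*(c_n)\subseteq A$, $b_n=b_n^*$ and $\|b_n\|\le M=\|b\|$. The substantive point is that $b_n\in\F$ with $\E[b_n]\le\E[c_n]$. I would get this from the Markovian inequality: by quadratic scaling of $\E$ one has $c_n\wedge M = M\bigl((c_n/M)\wedge1\bigr)\in\F$ with $\E[c_n\wedge M]\le\E[c_n]$, and then, using reality of $\E$, $\phi(c_n)=(c_n\wedge M)\vee(-M)=-\bigl((-(c_n\wedge M))\wedge M\bigr)\in\F$ with $\E[\phi(c_n)]\le\E[c_n\wedge M]\le\E[c_n]$ --- invoking here the standard identification of the $L^2$-Hilbert-projection operations $\cdot\wedge M$ and $\cdot\vee(-M)$ with the continuous functional calculus on self-adjoint elements of $\M\cap L^2(A,\tau)$. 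Hence $b_n\in\B$.

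For the convergence, I would first note that $\phi$ is a composition of the metric projections of $L^2_h(A,\tau)$ onto the closed convex sets $\{x\le M\}$ and $\{x\ge-M\}$, hence $\|\cdot\|_2$-nonexpansive, and that $\phi(b)=b$ since $\|b\|\le M$; therefore $\|b_n-b\|_2=\|\phi(c_n)-\phi(b)\|_2\le\|c_n-b\|_2\le\|c_n-b\|_\F\to0$. At the same time $\|b_n\|_\F^2=\E[b_n]+\|b_n\|_2^2\le\E[c_n]+\|c_n\|_2^2=\|c_n\|_\F^2$, so $\{b_n\}$ is bounded in the Hilbert space $\F$ with $\limsup_n\|b_n\|_\F\le\|b\|_\F$. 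Since every weak-$\F$ cluster point of $\{b_n\}$ is also a weak-$L^2$ cluster point (each functional $\zeta\mapsto(\zeta,\eta)_2$, $\eta\in L^2(A,\tau)$, being $\F$-continuous) and hence equals the $L^2$-limit $b$, I get $b_n\to b$ weakly in $\F$, then $\|b_n\|_\F\to\|b\|_\F$, and finally $\|b_n-b\|_\F\to0$ because in a Hilbert space weak convergence plus convergence of norms forces norm convergence. The $\sigma$-weak convergence $b_n\to b$ in $\M$ then follows for free from $\|b_n-b\|_2\to0$ and $\sup_n\|b_n\|\le M<\infty$, by approximating an arbitrary $h\in L^1(A,\tau)$ in $L^1$-norm by elements of $L^1\cap L^2$. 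When $b\ge0$, I would run the same argument with $\phi$ replaced by $\psi(t):=(t\vee0)\wedge M$ --- a normal contraction fixing $0$ with values in $[0,M]$, which is the composition of the metric projections onto $L^2_+(A,\tau)$ and $\{x\le M\}$ --- so that $b_n:=\psi(c_n)\ge0$; here $b_n\in\B$ and the energy bound use the stability of $\F$ under this normal contraction (with $\cdot\vee0=(\cdot)_+$ the projection onto $L^2_+(A,\tau)$), and $\psi(b)=b$ holds since $0\le b\le M$.

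I expect the main obstacle to be precisely the bookkeeping in the second step: extracting the stability of $\F$ under the truncations $\phi$ and $\psi$, with non-increasing energy, from the single Markovian inequality $\E[\xi\wedge1]\le\E[\xi]$, together with the identification of the $L^2$-order operations with the functional calculus on $\M\cap L^2(A,\tau)$. These are classical facts of noncommutative Dirichlet form theory (see [C1], [DL], [CS1]), but they carry the real content; once they are granted, the weak-to-strong upgrade in $\F$ and the $\sigma$-weak statement are routine.
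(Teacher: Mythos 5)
Your proof is correct, and it departs from the paper's precisely at the step that carries the real content: showing that the truncated approximants still converge to $b$ in $\F$. The paper truncates the form-core approximants to $b'_n=b_n\wedge K$, notes that this sequence is bounded in $\F$, extracts a weak $\F$-limit $\gamma\le b$ along a subsequence, and identifies $\gamma=b$ by passing to a weak limit $p$ of the spectral projections $I-e_n$ and comparing signs in the operator identity $(b-KI)p=b-\gamma$; only then does it upgrade to strong convergence in $\F$ using $\E[b'_n]\le\E[b_n]\to\E[b]$. You instead obtain the $L^2$-convergence of the truncations directly from the nonexpansiveness of the metric projections onto the closed convex sets $\left\{x\le M\right\}$ and $\left\{x\ge -M\right\}$ (resp.\ onto $L^2_+(A,\tau)$), together with the observation that these projections fix $b$, and then run the same weak-convergence-plus-convergence-of-norms upgrade in the Hilbert space $\F$. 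Your route is shorter and dispenses entirely with the subsequence extraction and the weak-limit identification; what it costs is an explicit appeal to the identification of $\cdot\wedge M$ and $\cdot\vee(-M)$ with the continuous functional calculus on bounded self-adjoint elements of $\M\cap L^2(A,\tau)$, and to the stability of $\F$ (with nonincreasing energy) under the resulting normal contractions $\phi$ and $\psi$ --- but the paper relies on exactly the same facts, since it writes $b_n\wedge K=e_nb_n+K(I-e_n)$ and invokes $b_n\vee 0$ in the positive case without further comment, so you are not assuming more than the authors do. The remaining points (symmetrization of the approximants via reality of $\E$, the bound $\|b_n\|\le\|b\|$, and $\sigma$-weak convergence from $L^2$-convergence plus uniform boundedness in $\M$) are handled essentially identically in both arguments.
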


\begin{proof} As, by the regularity of $(\E ,\F)$, the Dirichlet algebra $\B$ is a form core, there exists a sequence $\{b_n\}_{n\in \N}\subset\B$ which converges to $b$ in $\F$. By reality (2.1) of $\E$, the sequence $b_n^*$ converges also to $b^*$, so that one can suppose $b_n=b_n^*$ for all $n$.
\par\noindent
Set $K:=||b||$ and, for each $n$, let $e_n$ be the spectral projection of $b_n$ corresponding to the interval $(-\infty,K]$. Set $b'_n=b_n \wedge K=e_n\beta_n+K(I-e_n)$. One has $||b'_n||_{L^2(A,\tau)}\leq ||b_n||_{L^2(A,\tau)}$ (since ${b'_n}^2\leq b_n^2$) and, by the Markovian property (2.2) of the Dirichlet form, $\E[b'_n]\leq \E[b_n]$. Hence, the sequence $b'_n$ is bounded in $\F$. Replacing it by a subsequence, one can suppose that it has a weak limit $\gamma$ in $\F$, with $\gamma\leq b$.
\par\noindent
As $b'_n\to \gamma$ weakly in $L^2(A,\tau)$, we have
\begin{equation}\label{ineqgamma}
\tau(\gamma^2)\leq \liminf \tau({b'_n}^2)\leq \lim \tau(b_n^2)=\tau(b^2)
\end{equation}
and, weakly in $L^2(A,\tau)$,
\begin{equation}\label{betamoinsgamma}
(\beta_n-KI)(I-e_n)=b_n-b_n\wedge K\to b-\gamma\,.
\end{equation}
As $K^2\tau(I-e_n)\leq \tau(b_n^2(I-e_n))\leq \tau(b_n^2)\to \tau(b^2)$, one can suppose that the $I-e_n$ have a weak limit $p$ in $L^2(A,\tau)$, which is also a $\sigma$-weak limit in $\M$. So, $b_n (1-e_n)$ converges weakly to $b p$ in $L^2(A,\tau)$ and (\ref{betamoinsgamma}) provides
$$(b-KI)p=b-\gamma\,.$$
As $b_n$ commute with $e_n$, $b$ will commute with $p$, so that, in this equality, the left hand side is a negative operator while the right hand side is a positive operator. This proves $\gamma=b$ and, by (\ref{ineqgamma}), that $b'_n\to b$ strongly in $L^2(A,\tau)$. As the sequence $b'_n$ is bounded in $\F$, it converges to $b$ weakly in $\F$. As moreover $\E[b'_n]\leq \E[b_n]$ which converges to $\E[b]$, this must be a strong limit in $\F$.

\medskip\noindent Similarly, $b_n=b'_n\vee (-K)=-(-b_n\wedge K)$ converges to $b$ in $\F$. It is a bounded sequence in $\M$, with norm less that $K=\|b\|$. As its only possible $\sigma$-weak limit is $b$, it converges to $b$ $\sigma$-weakly in $\M$.

\smallskip\noindent Note that, if $b\geq 0$, one can replace $b_n=b'_n\vee (-K)$ by $b_n=b_n \vee 0$, so that $b_n\geq 0$ for all $n$.

\end{proof}

\begin{prop}\label{omegatilde}
If $\omega\in A^*_+$ is a finite-energy functional, then the linear map $\widetilde \omega :\widetilde \B \rightarrow\mathbb{C}$
\begin{equation}
\widetilde \omega (b):=\big<G(\omega)\,,\,b\big>_\F
\end{equation}
extends to the C$^*$-algebra $\mathcal C$ as a positive map with norm equal to $\|\omega\|_{A^*}$.
\end{prop}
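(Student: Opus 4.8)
The plan is to define $\widetilde\omega$ on the extended Dirichlet algebra $\widetilde\B$ by the stated formula $\widetilde\omega(b):=\langle G(\omega),b\rangle_\F$ (this is meaningful because $\widetilde\B\subseteq\F$) and then to establish three facts about it: it is positive on the positive part of $\widetilde\B$; it satisfies $|\widetilde\omega(b)|\le\|\omega\|_{A^*}\,\|b\|$ for self-adjoint $b\in\widetilde\B$; and it coincides with $\omega$ on $\B$. The first two come from approximating elements of $\widetilde\B$ by elements of $\B$ via Lemma \ref{BdansBtilde}, the third is immediate from the defining relation $\omega(b)=\langle G(\omega),b\rangle_\F$ for $b\in\B$. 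Together these let us extend $\widetilde\omega$ by norm-continuity to the fine C$^*$-algebra $\mathcal{C}$ and then identify the norm of the extension.

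First I would handle positivity. Let $b\in\widetilde\B$ with $b=b^*$ and $b\ge 0$ in $\M$. Lemma \ref{BdansBtilde} furnishes self-adjoint $b_n\in\B$ with $b_n\ge 0$, $\|b_n\|\le\|b\|$ and $\|b_n-b\|_\F\to 0$; since the scalar product of $\F$ is continuous and $\langle G(\omega),b_n\rangle_\F=\omega(b_n)$, this gives
\[
\widetilde\omega(b)=\langle G(\omega),b\rangle_\F=\lim_n\langle G(\omega),b_n\rangle_\F=\lim_n\omega(b_n)\geq 0.
\]
Running the same computation for an arbitrary self-adjoint $b\in\widetilde\B$, again with the control $\|b_n\|\le\|b\|$ from Lemma \ref{BdansBtilde}, shows that $\widetilde\omega(b)=\lim_n\omega(b_n)$ is a real number and that $|\widetilde\omega(b)|\le\|\omega\|_{A^*}\limsup_n\|b_n\|\le\|\omega\|_{A^*}\|b\|$. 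Splitting a general $c\in\widetilde\B$ as $c=c_r+ic_i$ with $c_r,c_i\in\widetilde\B$ self-adjoint, and using that $\widetilde\omega(c_r)$ and $\widetilde\omega(c_i)$ are real, one gets $|\widetilde\omega(c)|^2=\widetilde\omega(c_r)^2+\widetilde\omega(c_i)^2\le 2\|\omega\|_{A^*}^2\|c\|^2$. Hence $\widetilde\omega$ is bounded on $\widetilde\B$ for the norm of $\M$, and, $\widetilde\B$ being norm-dense in $\mathcal{C}$, it extends uniquely to a bounded linear functional on $\mathcal{C}$, still denoted $\widetilde\omega$.

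Next I would check that this extension is positive and compute its norm. Given $c\in\mathcal{C}_+$, write $c^{1/2}\in\mathcal{C}$ as a norm-limit of self-adjoint elements $d_n\in\widetilde\B$ (possible since $\widetilde\B$ is a norm-dense $*$-subalgebra of $\mathcal{C}$); then $d_n^2=d_n^*d_n\in\widetilde\B$ is positive in $\M$ and $d_n^2\to c$ in norm, so $\widetilde\omega(c)=\lim_n\widetilde\omega(d_n^2)\ge 0$ by the previous step. Thus $\widetilde\omega$ is a positive functional on $\mathcal{C}$, so its norm equals the supremum of $\widetilde\omega(a)$ over $a\in\mathcal{C}_+$ with $\|a\|\le 1$ (the non-unital characterization of the norm of a positive functional). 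Approximating such an $a$ by the elements $d_n^2\in\widetilde\B$ as above, for which $\|d_n^2\|=\|d_n\|^2\to\|a\|$ and $\widetilde\omega(d_n^2)\le\|\omega\|_{A^*}\|d_n^2\|$ by the self-adjoint estimate, yields $\widetilde\omega(a)\le\|\omega\|_{A^*}\|a\|\le\|\omega\|_{A^*}$, whence $\|\widetilde\omega\|_{\mathcal{C}^*}\le\|\omega\|_{A^*}$. For the reverse inequality, $\widetilde\omega$ restricts to $\omega$ on $\B$ by the defining relation, and since $\B$ is dense in $A$ and carries there the same norm as a subspace of $\mathcal{C}$, we get $\|\widetilde\omega\|_{\mathcal{C}^*}\ge\sup\{|\omega(b)|:b\in\B,\ \|b\|\le 1\}=\|\omega\|_{A^*}$. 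Therefore $\|\widetilde\omega\|_{\mathcal{C}^*}=\|\omega\|_{A^*}$.

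The step needing care is that $\widetilde\B$ and $\mathcal{C}$ are in general non-unital (for instance whenever $\tau$ is infinite), so the norm bounds cannot be obtained by testing against a unit. This is precisely why Lemma \ref{BdansBtilde}, which produces $\F$-approximants with the uniform control $\|b_n\|\le\|b\|$, is the right tool, and why the last step relies on the non-unital form of the identity $\|\psi\|=\sup\{\psi(a):a\in\mathcal{C}_+,\ \|a\|\le 1\}$ for positive functionals together with the density of the elements $d_n^*d_n$ in $\mathcal{C}_+$; the passage from self-adjoint to general elements of $\widetilde\B$ also needs a separate (easy) argument since Lemma \ref{BdansBtilde} is stated only for self-adjoint $b$.
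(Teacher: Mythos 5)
Your proposal is correct and follows essentially the same route as the paper: approximate self-adjoint elements of $\widetilde\B$ by elements of $\B$ via Lemma \ref{BdansBtilde} (with positive approximants when $b\geq 0$), deduce the bound $|\widetilde\omega(b)|\leq\|\omega\|_{A^*}\|b\|_\M$ and positivity, and extend by norm-density of $\widetilde\B$ in $\mathcal C$. You are somewhat more careful than the paper on the points it leaves implicit (the passage from self-adjoint to general elements, positivity on all of $\mathcal C_+$ rather than just $\widetilde\B_+$, and the actual verification of the norm equality via the non-unital characterization of the norm of a positive functional), and all of these supplementary steps are sound.
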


\begin{proof}
Note first that $G(\omega)^*=G(\omega)$ since, by symmetry of $\E$, one has, for $b\in \B$:
\[
\big<G(\omega)^*,b\big>_\F= \big<b^*,G(\omega)\big>_\F=\overline{\omega(b^*)}= \omega(b)=\big<G(\omega),b\big>_\F\,.
\]
The same computation proves that $\widetilde \omega$ is hermitian: $\widetilde \omega(b^*)=\overline {\widetilde \omega(b)}$ for $b\in \widetilde \B$.
\par\noindent
Let $b=b^*\in \widetilde \B$ and $b_n$ a sequence in $\B$ provided by Lemma \ref{BdansBtilde}. Since any finite energy functional is continuous with respect to the topology of $\F$, one has
\[
|\widetilde \omega(b)|=\lim| \omega(b_n)|\leq \|\omega\|_{A^*} \limsup \|b_n\|_A \leq \|\omega\|_{A^*}\, \|b\|_\M\,.
\]
By definition, $\widetilde\B$ is dense in $\mathcal{C}$ so that $\widetilde \omega$ extends by continuity to $\mathcal{C}$. To prove positivity, recall that, again by Lemma \ref{BdansBtilde}, if $b\ge 0$ we may assume the approximating sequence to be positive so that $\widetilde \omega(b)=\lim \omega(b_n)\geq 0$.
\end{proof}

Next proposition contains approximation and positivity results, we will need in the forthcoming section. They will be also used below to prove that potentials of finite-energy functionals are positive.

\begin{prop}\label{omegaepsilon} Let $\omega\in A^*_+$ be a finite-energy functional, $\widetilde \omega\in \mathcal{C}^*_+$ its canonical extension to the fine algebra $\mathcal{C}$ and $\e>0$. Then
\begin{itemize}
\item i) ${\widetilde \omega\circ (I+\e L)^{-1}}_{|A}$ is a positive finite-energy functional on $A$;
\item ii) one has $G\big({\widetilde \omega\circ (I+\e L)^{-1}}_{|A}\big)=(I+\e L)^{-1}G(\omega)$;
\item iii) one has $(I+L)(I+\e L)^{-1}G(\omega)\in L^1(A,\tau)\cap L^2_+(A,\tau)$;
\end{itemize}
\end{prop}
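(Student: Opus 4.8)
The plan is to build everything on the observation, already noted in the statement of Lemma~\ref{BdansBtilde} and Proposition~\ref{omegatilde}, that $\widetilde\omega\in\mathcal{C}^*_+$ and that $(I+\e L)^{-1}$ is a $\sigma$-weakly continuous completely positive contraction of $\M$ which maps $\F$ into $\F$. For part~(i), I would first check that $(I+\e L)^{-1}$ maps $A$ into $\mathcal{C}$: by regularity $\B$ is dense in $A$, and for $b\in\B\subseteq\F$ one has $(I+\e L)^{-1}b\in\F\cap\M=\widetilde\B\subseteq\mathcal{C}$; since $(I+\e L)^{-1}$ is a uniform contraction of $\M$ and $\mathcal{C}$ is norm-closed, it follows that $(I+\e L)^{-1}A\subseteq\mathcal{C}$. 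Hence $\widetilde\omega\circ(I+\e L)^{-1}$ restricted to $A$ is well defined, and it is positive because $(I+\e L)^{-1}$ is positivity preserving on $\M$ (so on $A$) and $\widetilde\omega$ is positive on $\mathcal{C}$. It remains to see it has finite energy, which will come out of the identification in part~(ii).

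For part~(ii), set $\eta:=(I+\e L)^{-1}G(\omega)\in\F$ and compute, for $b\in\B$, using self-adjointness of the resolvent on $L^2(A,\tau)$ and the fact that it commutes with $L$ on $\F$,
\[
\langle\eta,b\rangle_\F=\E(\eta,b)+(\eta,b)_2=\langle G(\omega),(I+\e L)^{-1}b\rangle_\F=\widetilde\omega\big((I+\e L)^{-1}b\big),
\]
where the last equality is the defining relation for $\widetilde\omega$ on $\widetilde\B$ applied to $(I+\e L)^{-1}b\in\widetilde\B$. Thus $\langle\eta,b\rangle_\F=\big(\widetilde\omega\circ(I+\e L)^{-1}\big)(b)$ for all $b\in\B$, which is exactly the relation characterizing the potential of $\widetilde\omega\circ(I+\e L)^{-1}{}_{|A}$; by uniqueness of the potential this both shows the functional has finite energy (completing (i)) and gives $G\big(\widetilde\omega\circ(I+\e L)^{-1}{}_{|A}\big)=\eta=(I+\e L)^{-1}G(\omega)$.

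For part~(iii), note $(I+L)(I+\e L)^{-1}G(\omega)=(I+\e L)^{-1}(I+L)G(\omega)$ since $L$ and $(I+\e L)^{-1}$ commute, and $(I+L)G(\omega)$ is the element of $L^2(A,\tau)$ which represents $\omega$ in the sense $\big((I+L)G(\omega),b\big)_2=\langle G(\omega),b\rangle_\F=\omega(b)$ for $b\in\B$; thus pairing with positive $b$ shows $(I+L)G(\omega)\in L^2_+(A,\tau)$. Applying the resolvent, which is positivity preserving, keeps it in $L^2_+(A,\tau)$. For the $L^1$ bound I would use that $(I+\e L)^{-1}$ is a contraction on $L^1(A,\tau)$ (stated in Section~2, from duality/interpolation of the Markovian semigroup): writing $h:=(I+\e L)^{-1}(I+L)G(\omega)$, positivity gives $\|h\|_1=\tau(h)$, and since $\widetilde\omega\circ(I+\e L)^{-1}{}_{|A}$ is a positive finite-energy functional, its potential $(I+\e L)^{-1}G(\omega)$ is represented (as in Example 3.3) by $h$ against the trace, so $\tau(h)=\tau\big((I+\e L)^{-1}(I+L)G(\omega)\big)<\infty$ because this equals the value of the normal extension of that finite-energy functional on $1_\M$, bounded by its norm $\le\|\omega\|_{A^*}$. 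The main obstacle I expect is the careful handling of integrability: namely verifying that $(I+L)G(\omega)$ genuinely lies in $L^1$ after one application of the resolvent (a priori $(I+L)G(\omega)$ is only in $L^2_+$), and for that the key input is interpreting $\tau$ of it as the mass of the finite-energy functional rather than attempting a direct estimate.
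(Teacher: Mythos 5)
Parts (i) and (ii) of your argument are correct and essentially the paper's own: the identity $\langle (I+\e L)^{-1}G(\omega),b\rangle_\F=\widetilde\omega\big((I+\e L)^{-1}b\big)$ for $b\in\B$, obtained from self-adjointness of the resolvent and its commutation with $L^{1/2}$, is exactly the chain of equalities in the paper, and it simultaneously yields positivity, finite energy and the identification of the potential.

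Part (iii), however, contains a genuine gap. You write $(I+L)(I+\e L)^{-1}G(\omega)=(I+\e L)^{-1}(I+L)G(\omega)$ and then treat $(I+L)G(\omega)$ as ``the element of $L^2(A,\tau)$ which represents $\omega$''. But $G(\omega)$ only lies in $\F=\D(L^{1/2})$, not in $\D(L)$, so $(I+L)G(\omega)$ need not exist in $L^2(A,\tau)$; it exists precisely when $\omega$ is of the normal form $\omega(b)=\tau(hb)$ with $h\in L^2(A,\tau)$, and the paper explicitly warns (Section 4) that in current examples most finite-energy functionals are singular with respect to $\tau$. The whole point of item (iii) is that the resolvent regularizes: $(I+\e L)^{-1}G(\omega)\in\D(L)$ always, so $h_\e:=(I+L)(I+\e L)^{-1}G(\omega)$ is well defined (the operator $(I+L)(I+\e L)^{-1}$ is bounded on $L^2(A,\tau)$ with norm at most $\max(1,\e^{-1})$), whereas your factorization through $(I+L)G(\omega)$ is not. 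The repair is the paper's argument and is already contained in your part (ii): the identity $(h_\e,b)_2=\widetilde\omega\big((I+\e L)^{-1}b\big)$ gives $h_\e\ge 0$ (pair with $b\in\B_+$) and $|\tau(h_\e b)|\le\|\widetilde\omega\|_{\C^*}\,\|b\|_A$ for all $b\in\B$, which, by density of $\B$ in $A$ and an approximate-unit argument together with lower semicontinuity of $\tau$, yields $h_\e\in L^1(A,\tau)$. Note also that your closing step is slightly circular: evaluating ``the normal extension of the functional at $1_\M$'' presupposes that $b\mapsto\tau(h_\e b)$ is normal, which is equivalent to $h_\e\in L^1(A,\tau)$, i.e.\ to what is being proved.
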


\begin{proof} As $(I+\e L)^{-1}$ is a positivity preserving, norm contraction on $\M$, the functional $\widetilde \omega\circ (I+\e L)^{-1}$ is positive on $\mathcal C$ and so it is its restriction to $A$, thus proving the statement in i).
\par\noindent
As $(I+\e L)^{-1}(b)\in \D(L)$ for $b\in\B$, the identities
\begin{equation}\label{eqL1}\begin{split}
\widetilde\omega\big( (I+\e L)^{-1}(b)\big)&=\big<G(\omega)\,,\, (I+\e L)^{-1}(b)\big>_\F\\
&=(G(\omega)\,,\, L(I+\e L)^{-1}(b))_{2}+(G(\omega), (I+\e L)^{-1}(b))_{2}\\
&=((I+L)(I+\e L)^{-1}G(\omega)\,,\,b)_{2}\\
&=\big<(I+\e L)^{-1}G(\omega)\,,\, b\big>_\F
\end{split}\end{equation}
allow us to conclude that ${\widetilde \omega\circ (I+\e L)^{-1}}_{|A}$ has finite energy, its potential is given by $G\big({\widetilde\omega\circ (I+\e L)^{-1}}_{|A}\big)=(I+\e L)^{-1}G(\omega)$ and $(I+L)(I+\e L)^{-1}G(\omega)$ is a positive element in $L^2(A,\tau)$.
\par\noindent
The second line in equations (\ref{eqL1}) tells us that the element
\[
h:=(I+L)(I+\e L)^{-1}G(\omega)\in L^2_+(A,\tau)
\]
satisfies
\[
|\tau(hb)|=|(h,b)_2|=|\widetilde \omega((I+\e L)^{-1}b)|\leq \|\widetilde \omega\|_{C^*} \|b\|_A\qquad b\in \B
\]
which suffices to imply $h\in L^1(A,\tau)$ thus proving the first assertion of iii).
\end{proof}

\begin{prop}\label{positivity}
The cone of potentials is contained in the standard cone: $\P\subset L^2_+ (A,\tau)$.
\end{prop}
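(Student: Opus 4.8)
The plan is to show that a potential $\xi\in\P=\F_+^\circ$ has vanishing negative part in the standard decomposition of $L^2_h(A,\tau)$. The first point is that $\xi$ is automatically real. Since the modular conjugation $J$ leaves $\F$ invariant and satisfies $\E[J\eta]=\E[\eta]$, complex polarization gives $\langle J\eta_1,J\eta_2\rangle_\F=\overline{\langle\eta_1,\eta_2\rangle_\F}$, so that $\langle\cdot,\cdot\rangle_\F$ is real-valued on the real subspace $\F_h:=\F\cap L^2_h(A,\tau)$. Decompose $\xi=\xi'+i\xi''$ with $\xi',\xi''\in\F_h$ (possible because $\F$ is a $J$-invariant complex subspace). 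For every $\eta\in\F_+$ both $\langle\xi',\eta\rangle_\F$ and $\langle\xi'',\eta\rangle_\F$ are real, so the inequality $\langle\xi,\eta\rangle_\F\ge 0$ forces $\langle\xi'',\eta\rangle_\F=0$; since $\F_h=\F_+-\F_+$ (by the Markovian property $|\zeta|\in\F$ for $\zeta\in\F_h$, whence $\zeta_\pm=\tfrac12(|\zeta|\pm\zeta)\in\F_+$) and $\langle\xi'',\cdot\rangle_\F$ is real on $\F_h$, it vanishes on all of $\F_h$; evaluating at $\xi''$ gives $\xi''=0$, i.e. $\xi\in L^2_h(A,\tau)$.

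Now write $\xi=\xi_+-\xi_-$ with orthogonal $\xi_\pm\in L^2_+(A,\tau)$, so that $|\xi|=\xi_++\xi_-$. By the Markovian property of $(\E,\F)$ one has $|\xi|\in\F$ and $\E[|\xi|]\le\E[\xi]$; hence $\xi_\pm=\tfrac12(|\xi|\pm\xi)\in\F$, and expanding $\E[|\xi|]\le\E[\xi]$ yields $\E(\xi_+,\xi_-)\le 0$. In particular $\xi_-\in\F_+=\F\cap L^2_+(A,\tau)$, so it may serve as a test element against the potential $\xi$:
\[
0\le\langle\xi,\xi_-\rangle_\F=\E(\xi,\xi_-)+(\xi,\xi_-)_2=\E(\xi_+,\xi_-)-\E[\xi_-]-\|\xi_-\|_{L^2(A,\tau)}^2 .
\]
The three terms on the right are respectively $\le 0$ (by the previous line), $\le 0$ (nonnegativity of $\E$) and $\le 0$ (orthogonality of $\xi_\pm$ in $L^2(A,\tau)$), so they all vanish; in particular $\|\xi_-\|_{L^2(A,\tau)}=0$ and $\xi=\xi_+\in L^2_+(A,\tau)$.

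The essential inputs are the reality step and, in the second paragraph, the quoted consequence of the Markovian axiom (2.2) that $|\xi|\in\F$ with $\E[|\xi|]\le\E[\xi]$ for real $\xi$; granted these, the argument is the familiar ``test against the negative part'' computation and presents no further difficulty. An alternative route, closer to the preceding propositions, avoids the reality discussion altogether: for $\e>0$ one shows, exactly as in the proof of Proposition \ref{omegaepsilon}, that $(I+\e L)^{-1}\xi=(I+L)^{-1}h_\e$ with $h_\e:=(I+L)(I+\e L)^{-1}\xi$, and that $(h_\e,b)_2=\langle\xi,(I+\e L)^{-1}b\rangle_\F\ge 0$ for all $b\in\B_+$ — here $(I+\e L)^{-1}b\in\F_+$ because the resolvent is positivity preserving on $\M$ and maps into $\D(L)\subseteq\F$. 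Since $\B_+$ is dense in the self-polar cone $L^2_+(A,\tau)$, this gives $h_\e\in L^2_+(A,\tau)$, hence $(I+\e L)^{-1}\xi\ge 0$; letting $\e\downarrow 0$ and using that $L^2_+(A,\tau)$ is closed while $(I+\e L)^{-1}\xi\to\xi$ in $L^2(A,\tau)$ yields $\xi\in L^2_+(A,\tau)$.
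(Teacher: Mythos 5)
Your argument is correct, but it is a genuinely different (and longer) route than the one the paper takes. The paper's proof is a two-liner: for any $b\in L^2_+(A,\tau)$ the positivity-preserving resolvent gives $(I+L)^{-1}b\in\F_+$, hence $(G,b)_2=(G,(I+L)(I+L)^{-1}b)_2=\big<G,(I+L)^{-1}b\big>_\F\ge 0$, and self-polarity of the cone $L^2_+(A,\tau)$ finishes. Your ``alternative route'' at the end is essentially this proof, except that the $\e$-regularization and the appeal to density of $\B_+$ in $L^2_+(A,\tau)$ are superfluous --- the paper tests directly against arbitrary $b\in L^2_+(A,\tau)$ and never needs that density. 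Your main argument instead works entirely at the level of the quadratic form: you first extract reality of $\xi$ from $J$-invariance of $\langle\cdot,\cdot\rangle_\F$ on $\F_h$ together with $\F_h=\F_+-\F_+$, then use the modulus contraction $|\xi|\in\F$, $\E[|\xi|]\le\E[\xi]$ (which the paper records after Definition 2.1 as a consequence of the Markovian axiom (2.2)) to get $\xi_\pm\in\F_+$ and $\E(\xi_+,\xi_-)\le 0$, and finally test the potential against its own negative part. This is the classical Beurling--Deny/Fukushima computation; it buys a self-contained argument that never invokes the resolvent's action on $\M$ and makes explicit that potentials are automatically $J$-real, at the cost of the polarization bookkeeping and of implicitly using the identification $\P=\F_+^\circ$ (testing against $\xi_-\in\F_+$ rather than only against $\B_+$) --- but the paper itself adopts that identification, and its own proof uses it too. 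Both proofs are valid.
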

\begin{proof}
Let us consider a potential $G\in\P$. By the positivity preserving property of the resolvents, we have that $(I+L)^{-1}b\in\F_+ :=\F\cap L^2_* (A,\tau)$ for any  $b\in L^2_* (A,\tau)$ and then
\[
(G,b)_2 = (G,(I+L)(I+L)^{-1}b)_2 = \big<G,(I+L)^{-1}b\big>_\F \ge 0\, .
\]
\end{proof}

Here we prove some useful property shared by potentials.

\begin{lem}\label{potmult}
If $G\in\P$ is a potential then $\displaystyle \frac{1}{\sqrt{G+\delta}}$
is a multiplier of the fine $C^*$-algebra $\mathcal C$, for all $\delta>0$.
\end{lem}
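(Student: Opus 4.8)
The plan is to establish the slightly stronger fact that the bounded operator $\psi(G):=\frac{1}{\sqrt{G+\delta}}-\frac{1}{\sqrt\delta}\,1_\M$ belongs to $\mathcal C$ itself, where $\psi(x):=(x+\delta)^{-1/2}-\delta^{-1/2}$. From this the multiplier property follows at once, because any element of $\M$ of the form $\lambda\,1_\M+c$ with $c\in\mathcal C$ maps $\mathcal C$ into itself, both on the left and on the right, $\mathcal C$ being a $C^*$-subalgebra of $\M$. To begin, I would record that, by Proposition \ref{positivity}, the potential $G$ lies in $L^2_+(A,\tau)$ and is therefore a nonnegative self-adjoint operator; hence $G+\delta\ge\delta\,1_\M>0$, the element $\frac{1}{\sqrt{G+\delta}}$ is positive in $\M$ with norm at most $\delta^{-1/2}$, and $\psi$ is a bounded continuous function on $[0,+\infty)$ with $\psi(0)=0$, so that $\psi(G)$ is a well-defined element of $\M$.

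The core of the argument is to approximate $G$ from below by bounded elements of the Dirichlet algebra. For $n\ge 1$ I would take the spectral truncation $G_n:=G\wedge n$: one has $0\le G_n\le n\,1_\M$, so $G_n\in\M$, while the Markovian property (2.2) applied to $n^{-1}G\in\F\cap L^2_h(A,\tau)$ gives $(n^{-1}G)\wedge 1\in\F$ and hence $G_n=n\bigl((n^{-1}G)\wedge 1\bigr)\in\F$; thus $G_n\in\widetilde\B=\F\cap\M\subset\mathcal C$. Since $\widetilde\B$ is a $*$-subalgebra of $\M$, every polynomial in $G_n$ without constant term lies in $\widetilde\B$, and because $\psi(0)=0$ the Stone--Weierstrass theorem on $\sigma(G_n)\subset[0,n]$ exhibits $\psi(G_n)$ as a norm limit of such polynomials; therefore $\psi(G_n)\in\overline{\widetilde\B}^{\,\|\cdot\|_\M}=\mathcal C$.

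It then remains to let $n\to\infty$. Since $\psi(G)$ and $\psi(G_n)$ are both functions of the single self-adjoint operator $G$, one has $\psi(G)-\psi(G_n)=f_n(G)$ with $f_n:=\psi-\psi\circ(\,\cdot\wedge n\,)$, a function that vanishes on $[0,n]$ and satisfies $|f_n(x)|\le(n+\delta)^{-1/2}$ for $x>n$; consequently $\|\psi(G)-\psi(G_n)\|_\M\le(n+\delta)^{-1/2}\to 0$. As $\mathcal C$ is norm closed and contains each $\psi(G_n)$, this gives $\psi(G)\in\mathcal C$, which completes the argument.

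I expect the two points requiring the most care to be the structural facts used in passing: that the spectral truncation $G\wedge n$ is governed by the Markovian hypothesis (2.2) — this is handled exactly as the terms $b_n\wedge K$ and $b_n\vee(-K)$ in the proof of Lemma \ref{BdansBtilde} — and that the fine $C^*$-algebra $\mathcal C$, being the norm completion of the $*$-algebra $\widetilde\B$, is stable under the continuous functional calculus, vanishing at the origin, of its bounded self-adjoint elements. Everything else reduces to the elementary spectral estimate $\|f_n\|_\infty\le(n+\delta)^{-1/2}$, so I do not foresee further difficulties.
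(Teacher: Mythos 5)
Your argument is correct, and it reaches the conclusion by the same decomposition as the paper --- write $\frac{1}{\sqrt{G+\delta}}$ as a constant multiple of $1_\M$ plus a function of $G$ vanishing at $0$, and check that the latter lies in $\C$ --- but the key containment is established by a genuinely different mechanism. The paper simply observes that $f(t)=(t+\delta)^{-1/2}-\delta^{-1/2}$ is bounded, vanishes at $0$ and has bounded derivative, and invokes the Lipschitz functional calculus of \cite{CS1} (Lemma 7.2 there) to conclude $f(G)\in\widetilde\B\subset\C$; this is shorter and yields the strictly stronger statement that $f(G)$ has finite energy, i.e.\ lies in $\F\cap\M$ and not merely in $\C$. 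Your route avoids that citation entirely: you truncate $G_n=G\wedge n$, which lands in $\widetilde\B$ by the Markov property (2.2) exactly as in the proof of Lemma \ref{BdansBtilde} (including the identification of the Hilbert projection $\,\cdot\wedge n$ with the spectral truncation, which the paper itself uses there), apply Stone--Weierstrass to get $\psi(G_n)\in\C$, and pass to the norm limit via the elementary bound $\|\psi(G)-\psi(G_n)\|_\M\le(n+\delta)^{-1/2}$. This is self-contained and uses nothing beyond the Markovianity of $\E$ and the norm-closedness of $\C$, at the cost of losing the information that $\psi(G)\in\F$ --- which the multiplier statement does not need. One incidental point in your favour: the constant you subtract, $\delta^{-1/2}$, is the correct one (the paper's $f$ is written with $1/\delta$ in place of $1/\sqrt\delta$, evidently a typo, since otherwise $f(0)\ne 0$).
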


\begin{proof} The function
\[
f:[0,+\infty)\to\mathbb{R}\qquad f(t):= \frac{1}{\sqrt{t+\delta}}-\frac{1}{\delta}
\]
vanishes at $0$, it is bounded and differentiable with bounded derivative. Hence by [\cite{CS1} Lemma 7.2] we have $f(G)\in \widetilde\B\subset \C$. Adding the constant operator $\frac{1}{\delta}$ we get a multiplier of $\mathcal C$.
\end{proof}

\begin{lem} For $\xi,\eta\in \mathcal F$ we have
\begin{equation}\label{der}
\frac{d}{dt}\left<e^{-t(1+L)}\xi,\eta\right>_{L^2(A,\tau)}=-\left<e^{-t(1+L)}\xi,\eta\right>_{\F}\qquad t\ge 0\,.
\end{equation}
\end{lem}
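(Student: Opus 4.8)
The plan is to combine the smoothing property of the symmetric Markovian semigroup with a continuity argument at the endpoint $t=0$. Write $S_t:=e^{-t(1+L)}=e^{-t}e^{-tL}$, so that $\{S_t\}_{t\ge 0}$ is the strongly continuous semigroup generated by $-(I+L)$ on $L^2(A,\tau)$.

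First I would settle the case $t>0$. Since $L$ is self-adjoint and nonnegative, $S_t\xi\in D(L)\subset D(L^{1/2})=\F$ for every $t>0$, and $r\mapsto S_r\xi$ is differentiable in $L^2(A,\tau)$ on $(0,\infty)$ with $\frac{d}{dr}S_r\xi=-(I+L)S_r\xi$. Hence, for $t>0$,
\[
\frac{d}{dt}\langle S_t\xi,\eta\rangle_{L^2(A,\tau)}=-\big((I+L)S_t\xi,\eta\big)_{L^2(A,\tau)}=-(S_t\xi,\eta)_{L^2(A,\tau)}-(LS_t\xi,\eta)_{L^2(A,\tau)}.
\]
Using $S_t\xi\in D(L)$, $\eta\in D(L^{1/2})$ and self-adjointness of $L^{1/2}$, the last term equals $(L^{1/2}S_t\xi,L^{1/2}\eta)_{L^2(A,\tau)}=\E(S_t\xi,\eta)$, so the right-hand side is $-\big(\E(S_t\xi,\eta)+(S_t\xi,\eta)_{L^2(A,\tau)}\big)=-\langle S_t\xi,\eta\rangle_\F$, which is (\ref{der}) for $t>0$.

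Next I would promote this to $t=0$. The function $g(t):=\langle S_t\xi,\eta\rangle_{L^2(A,\tau)}$ is continuous on $[0,\infty)$ by strong continuity of the semigroup, and $t\mapsto\langle S_t\xi,\eta\rangle_\F$ is continuous on $[0,\infty)$ as well: by the spectral theorem $L=\int_0^\infty\lambda\,dE_\lambda$ one has $\|S_t\xi-\xi\|_\F^2=\int_0^\infty(1+\lambda)\,|e^{-t(1+\lambda)}-1|^2\,d\langle E_\lambda\xi,\xi\rangle\to 0$ as $t\to 0^+$ by dominated convergence, since $(1+\lambda)\in L^1(d\langle E_\lambda\xi,\xi\rangle)$ because $\xi\in\F$. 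Thus $g$ is continuous on $[0,\infty)$, differentiable on $(0,\infty)$ with continuous derivative $g'(t)=-\langle S_t\xi,\eta\rangle_\F$, whence $g(t)-g(0)=-\int_0^t\langle S_r\xi,\eta\rangle_\F\,dr$ for all $t\ge 0$; differentiating this identity gives (\ref{der}) including the one-sided derivative at $t=0$. (Alternatively, one can prove everything in one stroke via the spectral representation $g(t)=\int_0^\infty e^{-t(1+\lambda)}\,d\langle E_\lambda\xi,\eta\rangle$, differentiating under the integral sign — legitimate for all $t\ge 0$ because $(1+\lambda)e^{-t(1+\lambda)}\le 1+\lambda$ and $\int_0^\infty(1+\lambda)\,d|\langle E_\lambda\xi,\eta\rangle|\le\|\xi\|_\F\|\eta\|_\F<\infty$ by Cauchy–Schwarz for the spectral measure — and identifying $\int_0^\infty(1+\lambda)e^{-t(1+\lambda)}\,d\langle E_\lambda\xi,\eta\rangle$ with $\E(S_t\xi,\eta)+(S_t\xi,\eta)_{L^2(A,\tau)}=\langle S_t\xi,\eta\rangle_\F$ by the same functional calculus.)

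The only genuinely delicate point is the behaviour at $t=0$: there $\xi$ need not lie in $D(L)$, so the pointwise formula $\frac{d}{dt}S_t\xi=-(I+L)S_t\xi$ fails in $L^2(A,\tau)$, and one must instead rely on the $\F$-norm continuity of $t\mapsto S_t\xi$ (equivalently, strong continuity on the Dirichlet space $\F$ of the semigroup) or on the $\F$-integrability of $1+\lambda$ against $d\langle E_\lambda\xi,\xi\rangle$ — which is precisely the membership $\xi\in\F=D(L^{1/2})$ — to bridge from $(0,\infty)$ down to the endpoint. Everything else is routine.
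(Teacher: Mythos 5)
Your argument is correct, and it reaches the same integral identity
\[
\left<e^{-t(1+L)}\xi,\eta\right>_{L^2(A,\tau)}=\left<\xi,\eta\right>_{L^2(A,\tau)}-\int_0^t\left<e^{-s(1+L)}\xi,\eta\right>_{\F}\,ds
\]
that the paper uses, but by a different regularization. The paper regularizes in the \emph{space} variable: it observes that the identity is obvious for $\xi\in D(L)$, rewrites it in the integral form above, and then extends to all $\xi\in\F$ by density of $D(L)$ in $\F$ together with continuity of both sides in the $\F$-norm of $\xi$. You instead regularize in the \emph{time} variable: the smoothing property $e^{-t(1+L)}\xi\in D(L)$ for $t>0$ gives the pointwise derivative away from the origin for arbitrary $\xi\in\F$, and the endpoint $t=0$ is recovered from the $\F$-norm continuity of $t\mapsto e^{-t(1+L)}\xi$, which you justify by spectral calculus and dominated convergence (your parenthetical one-stroke spectral proof is also valid and arguably the cleanest). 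The two approaches are of comparable length and both are standard; yours avoids invoking density of $D(L)$ in $\F$ but pays for it with the explicit spectral estimate at $t=0$, which is exactly the point you correctly single out as the only delicate one.
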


\begin{proof} For $\xi\in Dom_{L^2}(L)$ the identity is obvious. Writing it in integral form
$$\left<e^{-t(1+L)}\xi,\eta\right>_{L^2(A,\tau)}=\left<\xi,\eta\right>_{L^2(A,\tau)} - \int_0^t \left<e^{-s(1+L)}\xi,\eta\right>_{\mathcal F}ds\,,$$
it extends easily to $\xi ,\eta\in\F$.
\end{proof}

\begin{lem}\label{potres} For any potential $G\in \P$ one has
\[
e^{-t(1+ L)}G\leq G \;\text{ in }L^2(A,\tau)\qquad t\ge 0
\]
and
\[
\frac{1}{1+\e L}G \leq \frac{1}{1-\e} G \;\text{ in }L^2(A,\tau)\qquad 0<\e<1\,.
\]
Viceversa, any one of the two above properties implies that $G$ is a potential.
\end{lem}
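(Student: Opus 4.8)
The plan is to prove both inequalities by testing against the positive cone $\F_+:=\F\cap L^2_+(A,\tau)$ of the Dirichlet space. This suffices because $L^2_+(A,\tau)$ is self-polar and $\F_+$ is dense in it: for $\eta\in L^2_+(A,\tau)$ the vectors $e^{-s(1+L)}\eta$ belong to $\F_+$ (the Markovian semigroup is positivity preserving, and $e^{-sL}$ maps $L^2(A,\tau)$ into $\D(L)\subset\F$) and converge to $\eta$ in $L^2(A,\tau)$ as $s\downarrow 0$. I will also use repeatedly that $e^{-s(1+L)}$ and the resolvents $R_\e:=(I+\e L)^{-1}$ commute with $L$, hence are self-adjoint for $\langle\cdot,\cdot\rangle_\F$ and map $\F_+$ into $\F_+$.

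For the two forward implications, fix a potential $G\in\P$ and $\eta\in\F_+$. Integrating \eqref{der} with $\xi=G$ gives
\[
\bigl(G-e^{-t(1+L)}G,\eta\bigr)_2=\int_0^t\bigl\langle e^{-s(1+L)}G,\eta\bigr\rangle_\F\,ds=\int_0^t\bigl\langle G,e^{-s(1+L)}\eta\bigr\rangle_\F\,ds\ge 0\,,
\]
because $e^{-s(1+L)}\eta\in\F_+$ and $G$ is a potential; by the reduction above this yields $e^{-t(1+L)}G\le G$. For the resolvent inequality I invoke the elementary identity $\e\,(I+L)R_\e=I-(1-\e)R_\e$ together with $\langle R_\e G,\eta\rangle_\F=\bigl((I+L)R_\e G,\eta\bigr)_2$ (legitimate since $R_\e G\in\D(L)$), obtaining for $0<\e<1$ and $\eta\in\F_+$
\[
0\le\bigl\langle G,R_\e\eta\bigr\rangle_\F=\bigl\langle R_\e G,\eta\bigr\rangle_\F=\tfrac{1}{\e}(G,\eta)_2-\tfrac{1-\e}{\e}(R_\e G,\eta)_2\,,
\]
hence $(R_\e G,\eta)_2\le(1-\e)^{-1}(G,\eta)_2$ for all $\eta\in\F_+$, i.e. $R_\e G\le(1-\e)^{-1}G$.

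For the converse, assume $G\in\F$ satisfies $e^{-t(1+L)}G\le G$. Then $t^{-1}\bigl(G-e^{-t(1+L)}G,\eta\bigr)_2\ge 0$ for every $\eta\in\F_+$ and $t>0$; letting $t\downarrow 0$ and using \eqref{der} at $t=0$ gives $\langle G,\eta\rangle_\F\ge 0$, so $G\in\P$. If instead $G\in\F$ satisfies $R_\e G\le(1-\e)^{-1}G$, rewrite $(1-\e)^{-1}G-R_\e G=\tfrac{\e}{1-\e}(I+L)R_\e G$; thus $(I+L)R_\e G\ge 0$ in $L^2(A,\tau)$, and therefore $\langle R_\e G,\eta\rangle_\F=\bigl((I+L)R_\e G,\eta\bigr)_2\ge 0$ for all $\eta\in L^2_+(A,\tau)$. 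Letting $\e\downarrow 0$ and using that $R_\e$ converges strongly to the identity on $\F$, we obtain $\langle G,\eta\rangle_\F\ge 0$ for all $\eta\in\F_+$, so $G\in\P$.

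The manipulations are routine once the framework is set up; the one step requiring a little care is the reduction of an inequality ``in $L^2(A,\tau)$'' to testing against $\F_+$, namely the density of $\F_+$ in the self-polar cone $L^2_+(A,\tau)$, together with checking that each vector appearing along the way ($e^{-s(1+L)}\eta$, $R_\e\eta$, $R_\e G$) really lies in $\F$, respectively $\D(L)$, so that the identities for $\langle\cdot,\cdot\rangle_\F$ apply. No deeper obstacle is anticipated.
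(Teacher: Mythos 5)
Your proof is correct. For the semigroup inequality and its converse you follow essentially the paper's line: the authors apply (\ref{der}) to get $\frac{d}{dt}\langle e^{-t(1+L)}G,b\rangle_2=-\langle e^{-t(1+L)}G,b\rangle_\F\le 0$ for $b\in\F_+$, and obtain the converses by differentiating the inequalities weakly in $\F$ at $t=0$ and $\e=0$; your integrated form of (\ref{der}) and the limit $t^{-1}\bigl(G-e^{-t(1+L)}G,\eta\bigr)_2\to\langle G,\eta\rangle_\F$ are the same computation. Where you genuinely diverge is the resolvent inequality: the paper deduces it from the semigroup bound by integrating $e^{-t(1+L)}G\le G$ against the probability measure $me^{-tm}\,dt$, which yields $\frac{m}{m+1+L}G\le G$ and the claim upon choosing $(m+1)\e=1$, whereas you work directly at the resolvent level via the identity $\e\,(I+L)(I+\e L)^{-1}=I-(1-\e)(I+\e L)^{-1}$ together with $\langle G,(I+\e L)^{-1}\eta\rangle_\F\ge 0$. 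Both are valid; the subordination argument is a one-liner once the semigroup bound is in hand, while your identity is self-contained and, as you note, immediately hands you the converse as well. You are also more explicit than the paper about two points it leaves implicit: that an inequality ``in $L^2(A,\tau)$'' may be tested against $\F_+$ because $\F_+$ is dense in the self-polar cone $L^2_+(A,\tau)$, and that $e^{-t(1+L)}$ and $(I+\e L)^{-1}$ are self-adjoint for $\langle\cdot,\cdot\rangle_\F$ and preserve $\F_+$. One small slip: in the resolvent converse you assert $\langle (I+\e L)^{-1}G,\eta\rangle_\F\ge 0$ ``for all $\eta\in L^2_+(A,\tau)$''; that pairing is only defined for $\eta\in\F_+$, which is all you actually need.
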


\begin{proof} Applying (\ref{der}), for $b\in \mathcal F_+$ one has
\begin{equation*}\begin{split}
\frac{d}{dt}\left< e^{-t(1+ L)}G,b\right>_{L^2(A,\tau)} &= -\left< e^{-t(1+ L)}G,b\right>_{\mathcal F}\leq 0
\end{split}\end{equation*}
and then $e^{-t(1+ L)}G\leq G$. Integrating this inequality between $0$ and $+\infty$ with respect to the probability measure $me^{-tm}dt$ for $m>0$, one gets
$$\frac{m}{m+1+L}G \leq G\,,$$
and the result choosing $m$ such that $(m+1)\e=1$. The converse of the two above results are easily obtained deriving the inequalities, weakly in $\F$, in $t=0$ and $\e=0$, respectively.

\end{proof}

We conclude this section with a result that could be considered as a version of a ''noncommutative maximum principle" in Dirichlet spaces (for other versions see [C3], [CS2], [S4]). We will need it in the proof of Proposition \ref{casparticulier} below.

\begin{prop}\label{compare} Let $\omega$ and $\omega'$ in $A^*_+$ be such that $\omega'\leq \omega$ and $\omega$ has finite energy. Then $\omega'$ has finite energy, the potential of $\omega'$ is dominated by the potential of $\omega$
\[
G(\omega')\leq G(\omega)\, ,
\]
meaning that $G(\omega) - G(\omega')\in \F_+$, and the energy content of $\omega'$ is not greater than the one of $\omega$
\[
\E[\omega']\le\E[\omega]\, .
\]
\end{prop}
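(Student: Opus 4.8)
The plan is to reduce the whole statement to the additivity of the potential map together with Proposition~\ref{positivity}, without touching the semigroup or the resolvent.

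\emph{Step 1 (the relevant functionals have finite energy).} Put $\mu:=\omega-\omega'$. Since $\omega'\le\omega$ in $A^*_+$ we have $\mu\in A^*_+$, and since $\omega'(b)\ge 0$ the finite-energy bound $\omega(b)\le c_\omega\|b\|_\F$ for $b\in\B_+$ yields at once
\[
\omega'(b)\le\omega(b)\le c_\omega\|b\|_\F ,\qquad \mu(b)=\omega(b)-\omega'(b)\le\omega(b)\le c_\omega\|b\|_\F \qquad (b\in\B_+).
\]
Hence $\omega'$ and $\mu$ have finite energy, and their potentials $G(\omega')$, $G(\mu)$ are defined elements of $\F$. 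Moreover each of $G(\omega)$, $G(\omega')$, $G(\mu)$ is a potential: for $b\in\B_+$ its $\F$-pairing with $b$ equals the value of the corresponding positive functional on $b\ge 0$, hence is $\ge 0$. Thus all three belong to $\P$.

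\emph{Step 2 (domination of potentials).} From the defining identity $\nu(b)=\langle G(\nu),b\rangle_\F$, $b\in\B$, and the uniqueness of the representing vector (the Dirichlet algebra $\B$ being dense in $\F$), the map $\nu\mapsto G(\nu)$ is additive on finite-energy functionals. Applied to $\omega=\omega'+\mu$ it gives $G(\omega)-G(\omega')=G(\mu)$. By Proposition~\ref{positivity}, $G(\mu)\in\P\subseteq L^2_+(A,\tau)$; since also $G(\mu)\in\F$, we get $G(\omega)-G(\omega')=G(\mu)\in\F\cap L^2_+(A,\tau)=\F_+$, which is the assertion $G(\omega')\le G(\omega)$.

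\emph{Step 3 (comparison of energy contents).} Recall that a finite-energy functional $\nu$ extends to $\F$ by $\nu(b)=\langle G(\nu),b\rangle_\F$, so that $\E[\nu]=\nu(G(\nu))=\langle G(\nu),G(\nu)\rangle_\F$. Substituting $G(\omega)=G(\omega')+G(\mu)$ and expanding the inner product bilinearly,
\[
\E[\omega]=\E[\omega']+2\langle G(\omega'),G(\mu)\rangle_\F+\E[\mu].
\]
Both $G(\omega')$ and $G(\mu)$ lie in $\P$; since $\P\subseteq\F_+$ while $\P=\F_+^\circ$, the cross term $\langle G(\omega'),G(\mu)\rangle_\F$ is nonnegative, and $\E[\mu]\ge 0$, whence $\E[\omega']\le\E[\omega]$.

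The argument is essentially bookkeeping once Proposition~\ref{positivity} is available. The only step requiring attention is that the hypothesis $\omega'\le\omega$ and the finite-energy bound are each imposed on restricted sets ($A$ and $\B_+$), so one must check that $\mu=\omega-\omega'$ still satisfies the finite-energy bound on $\B_+$ — which it does trivially, by discarding the nonnegative term $\omega'(b)$ — and that in Step~3 the extension of $\nu$ to all of $\F$ is the bilinear-form one, so that the expansion of $\langle\cdot,\cdot\rangle_\F$ is legitimate. I do not expect a genuine obstacle.
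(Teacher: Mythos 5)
Your proposal is correct and follows essentially the same route as the paper: both deduce finite energy of $\omega'$ and of $\mu=\omega-\omega'$ from the domination on $\B_+$, identify $G(\omega)-G(\omega')=G(\mu)$ and invoke Proposition~\ref{positivity} to get positivity, and then compare energy contents using that potentials pair nonnegatively with $\F_+$. Your bilinear expansion of $\E[\omega]=\langle G(\omega'),G(\mu)\rangle_\F$-type terms is just an unfolded version of the paper's one-line chain $\E[\omega']=\omega'(G(\omega'))\le\omega(G(\omega'))\le\omega(G(\omega))=\E[\omega]$.
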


\begin{proof} If $b\in \B$ is positive one has $\omega'(b)\leq \omega(b)\leq c_\omega ||b||_\F \leq c_\omega ||b||_\F$ for some $c_\omega >0$. Decomposing a generic $b\in \B$ as a linear superposition of positive elements in $\B$ one gets $|\omega(b)|\leq 4c_\omega ||b||_\F$ so that $\omega'$ is a finite-energy functional.
\par\noindent
Notice that, for the same reason, $\omega-\omega'$ is a finite-energy functional on $A$ whose potential is given by  $G(\omega-\omega')=G(\omega)-G(\omega')$. This is a positive element in $L^2(A,\tau)$ by the previous proposition. We conclude the proof by the estimate
\[
\E[\omega']=\omega' (G(\omega'))\le \omega (G(\omega'))\le \omega (G(\omega))=\E[\omega]\, .
\]
\end{proof}

\newpage \section{Deny's embedding and Deny's inequality.}

\medskip
This section is devoted, in present setting of Dirichlet spaces over noncommutative C$^*$-algebras with traces, to prove a theorem obtained by J. Deny ([Den]) in the classical framework.
\par\noindent
What Deny proved is that, if $\mu$ is a finite-energy measure on the locally compact space $X$, having a bounded potential, then the Dirichlet space $\F$ is continuously imbedded in the space $L^2(X,\mu)$. In other words, the Dirichlet form, initially considered as a closed form on $L^2 (X,m)$ with respect to a fixed positive measure $m$, results to be closable on all the spaces $L^2 (\mu ,X)$ with respect to finite-energy measures having bounded potentials. The probabilistic counterpart of this property is the "change of speed measure" or "random time change" of the stochastic Hunt processes $X$ associated to the Dirichlet form and to the different reference measures. A detailed discussion about this can be found in [FOT].
\par\noindent
We will prove below that if $\omega\in A_+^*$ is a finite-energy functional with respect to a Dirichlet form $(\E,\F)$, based on the Hilbert space $L^2 (A,\tau)$ of a trace $\tau$ on $A$, having a bounded potential $G(\omega)\in\M$, then the Dirichlet space $\F$ is embedded in the G.N.S. space $L^2 (A,\omega)$ with embedding norm less than $\sqrt{\|G(\omega)\|_\M}$.
\par\noindent
One of the problem to circumvent in the proof of the result is that, in general, the functional $\omega$ need not to be a trace and consequently the extension of bounded maps on the von Neumann algebra $\M$ to bounded maps on the Hilbert space $L^2 (A,\omega)$ cannot rely on their G.N.S.-symmetry but rather on their K.M.S.-symmetry with respect to $\omega$ (as introduced in \cite{C1}, \cite{C2}). Note that, in general, finite-energy functionals need not to be absolutely continuous with respect to the trace $\tau$ and, as a matter of fact, in current examples most of them are singular with respect to $\tau$.
\vskip0.2truecm\noindent
In the following we will denote by $\Omega\in L^2_+(A,\omega)$ the cyclic vector representing the functional $\omega\in A^*_+$:
\[
\omega (b)=(\Omega ,b\Omega)_{L^2(A,\omega)}\qquad b\in A\, .
\]
We also prove below the Deny's inequality in the noncommutative framework.

\begin{thm}\label{Deny}{\bf (Deny's embedding Theorem)} Let $\omega\in A^*_+$ be a finite-energy functional. If its potential $G(\omega)\in\F$ is bounded, hence belongs to extended Dirichlet algebra $\F\cap\M =\widetilde \B$, then one has
\begin{equation}
\omega(b^*b)\leq ||G(\omega)||_\M \,||b||_\F^2\qquad b\in\B\,.
\end{equation}
Hence, there exist a continuous imbedding $T:\F\to L^2(A,\omega)$, with norm less than $||G(\omega)||_\M^{1/2}$, such that $Tb=b\Omega$ for $b\in \B$.
\end{thm}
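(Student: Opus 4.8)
The plan is to reduce everything to the quadratic estimate
$$\omega(b^*b)\le \|G(\omega)\|_\M\,\|b\|_\F^2,\qquad b\in\B,$$
from which the embedding follows at once. Indeed, setting $Tb:=b\Omega$ for $b\in\B$, if $b_n\to b$ in $\F$ with $b_n\in\B$ then, applying the estimate to $b_n-b_m$, the sequence $(b_n\Omega)$ is Cauchy in $L^2(A,\omega)$, so $T$ extends by continuity to $T:\F\to L^2(A,\omega)$ with $\|T\|\le\|G(\omega)\|_\M^{1/2}$; here one uses that $\B$ is dense in $\F$, i.e. the regularity of the Dirichlet form. Since $\B$ is an algebra, $b^*b\in\B$, and the defining relation of the potential gives $\omega(b^*b)=\langle G(\omega),b^*b\rangle_\F=\E(G(\omega),b^*b)+(G(\omega),b^*b)_2$; write $G:=G(\omega)$ and $M:=\|G\|_\M$ in what follows.

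The $L^2$-contribution is harmless: by Proposition \ref{positivity} the potential $G$ is positive, and being bounded it satisfies $0\le G\le M\,1_\M$, so by cyclicity of $\tau$ one gets $(G,b^*b)_2=\tau(G^{1/2}b^*bG^{1/2})\le M\,\tau(b^*b)=M\|b\|_2^2$. It is tempting to conclude by estimating $\E(G,b^*b)\le M\E[b]$, but this inequality is \emph{false} for a general bounded positive element (e.g. a ``sawtooth'' $G$ with $0\le G\le M$, which is not a potential), so the argument must use that $G$ is a \emph{potential}. The relevant form of this is Lemma \ref{potres}: $e^{-t(1+L)}G\le G$ in $L^2(A,\tau)$ for all $t\ge 0$, equivalently $A_t:=(I-e^{-t(1+L)})G$ is positive, and $0\le A_t\le M\,1_\M$ because $e^{-t(1+L)}G\ge 0$. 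Combining this with the derivative identity \eqref{der} applied to $\xi=G$, $\eta=b^*b$,
$$\omega(b^*b)=\langle G,b^*b\rangle_\F=\lim_{t\downarrow 0}\frac1t\big((I-e^{-t(1+L)})G,\,b^*b\big)_2=\lim_{t\downarrow 0}\frac1t\,\tau(A_t\,b^*b).$$

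The heart of the matter, which I expect to be the main obstacle, is to pass from this formula to the bound $M\|b\|_\F^2$: one uses the $\tau$-symmetry of $I-e^{-t(1+L)}$ on $L^2(A,\tau)$ to rewrite $\tau(A_t b^*b)=\tau\big(G\,(I-e^{-t(1+L)})(b^*b)\big)$, and then exploits simultaneously the positivity $0\le G\le M$ and the structure of $b^*b$; to make sense of square-root manipulations on $G$ one invokes Lemma \ref{potmult}, namely that $(G+\delta)^{-1/2}$, hence also $(G+\delta)^{1/2}$ (which differs from $\sqrt\delta$ by an element of $\widetilde\B$), is a multiplier of the fine C$^*$-algebra $\C$, letting $\delta\downarrow 0$ at the end. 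The genuinely noncommutative difficulty is that $\omega$ need not be a trace, so the form and $L^2$ parts of $\langle G,b^*b\rangle_\F$ cannot be estimated separately; it is likely that one first replaces $\omega$ by the normal finite-energy functionals $\widetilde\omega\circ(I+\e L)^{-1}{}_{|A}$ of Proposition \ref{omegaepsilon}, whose potentials $(I+\e L)^{-1}G$ still satisfy $0\le (I+\e L)^{-1}G\le M$ and lie in $D(L)$ with $(1+L)(I+\e L)^{-1}G\in L^1_+(A,\tau)$, proves the estimate in that regularized situation, and recovers the general case by letting $\e\downarrow 0$ (using that $\langle (I+\e L)^{-1}G,b^*b\rangle_\F\to\langle G,b^*b\rangle_\F$ since $b^*b\in\F$). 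Once the quadratic estimate is established, the assertion about $T$ follows as in the first paragraph.
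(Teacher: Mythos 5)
You have correctly isolated the statement to be proved (the quadratic estimate $\omega(b^*b)\le\|G(\omega)\|_\M\|b\|_\F^2$ on $\B$, from which the embedding follows by density), correctly observed that the naive bound $\E(G,b^*b)\le M\,\E[b]$ fails and that the potential property of $G$ must enter, and correctly anticipated the outer layer of the paper's argument, namely the reduction to bounded Dirichlet forms $\E_\e$ and normal faithful functionals via $\widetilde\omega\circ(I+\e L)^{-1}$ plus an auxiliary $\e\omega_0$. But the central estimate is never actually proved: at the decisive point you write that one ``exploits simultaneously the positivity $0\le G\le M$ and the structure of $b^*b$,'' which is not an argument. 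The route you sketch does not close on its own. From $A_t:=(I-e^{-t(1+L)})G$ with $0\le A_t\le M\,1_\M$ one only gets $\tau(A_t b^*b)\le M\,\|b\|_2^2$, and dividing by $t$ this blows up; extracting a bound of order $t\,\|b\|_\F^2$ from $\tau(A_t b^*b)$ requires a genuine Cauchy--Schwarz mechanism for the Markovian semigroup (already in the commutative case Deny's proof needs the kernel inequality $P_t(u^2)\ge (P_tu)^2$ and a careful symmetrization), and none of this is supplied. Lemma \ref{potmult} on $(G+\delta)^{-1/2}$ being a multiplier of $\C$ is not used in the paper's proof of this theorem and does not substitute for that mechanism.

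What the paper actually does in the regularized situation (Proposition \ref{casparticulier}) is construct a completely positive kernel $V(x):=G(\rho_x)$ with $\rho_x(y)=(Jx^*\Omega,y\Omega)_2$, show $V(1_\M)=G(\omega)=\beta^*\beta$ for a nonsingular $\beta\in\M$, factor $V(x)=\beta^*W(x)\beta$ with $W$ a unital, $\sigma$-weakly continuous completely positive map, and then prove that $W$ is KMS-symmetric with respect to $\omega'=\rho_{G(\omega)}$, hence contractive on $L^2(A,\omega')$; the inequality $\|V(x)\|_\F\le\|x\Omega'\|_2$ and a duality step then yield $\|x\Omega\|_2\le\|G(\omega)^{1/2}x\|_\F$. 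This is exactly the noncommutative replacement for the kernel argument, and it is also where your own worry --- that $\omega$ is not a trace, so GNS-symmetry is unavailable --- gets resolved, via KMS-symmetry. Since this entire construction is absent from your proposal, the proof has a genuine gap at its core: the inequality is reduced to a plausible-looking limit formula, but the limit is never estimated.
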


\medskip Before proving the theorem in its full generality, we investigate the special case where $\E$ is bounded and $\omega$ is the restriction of a faithful normal functional on $\M$. The general case will be deduced from this special one with help of Proposition 3.6.

\begin{prop}\label{casparticulier} Let $\E$ be a bounded Dirichlet form on $L^2(A,\tau)$ and $\omega\in \M_{*^+}$ be faithful with finite energy. If its potential is bounded $G(\omega)\in \F_+\cap \M$, then one has
\begin{equation}
\omega(b^*b)\leq \|G(\omega)\|_\M \,\|b\|_\F^2\qquad b\in {\widetilde\B}\,.
\end{equation}

\end{prop}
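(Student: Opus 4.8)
## Proof proposal

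The plan is to establish the quadratic estimate $\omega(b^*b)\le\|G(\omega)\|_\M\,\|b\|_\F^2$ for every $b\in\widetilde\B$ by adapting J. Deny's classical argument to the present noncommutative, non‑tracial situation. Write $G:=G(\omega)$ and $M:=\|G\|_\M$. The preliminary observations are routine. Since $\widetilde\B=\F\cap\M$ is a $*$-algebra and $\omega$ is faithful normal, the extension $\widetilde\omega$ of Proposition~\ref{omegatilde} coincides with $\omega$ on $\mathcal C\supseteq\widetilde\B$, so that $\omega(v)=\langle G,v\rangle_\F$ for every self-adjoint $v\in\widetilde\B$; in particular $\omega(b^*b)=\langle G,b^*b\rangle_\F$. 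As $\E$ is bounded, $L$ is a bounded operator, $\F=L^2(A,\tau)$ with equivalent norms, and for $\delta>0$ the operator $(G+\delta)^{-1}$ lies in $\M$ with $(G+\delta)^{-1}\ge(M+\delta)^{-1}1_\M$, while $(G+\delta)^{-1}-\delta^{-1}1_\M\in\widetilde\B$ by Lemma~7.2 of \cite{CS1}, so that $v_\delta:=b^*(G+\delta)^{-1}b\in\widetilde\B$.

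The easy half of Deny's trick is the inequality
\[
\omega(b^*b)\ \le\ (M+\delta)\,\omega\big(b^*(G+\delta)^{-1}b\big)\qquad(\delta>0).
\]
Indeed $\tilde D_\delta:=(M+\delta)(G+\delta)^{-1}-1_\M\ge0$ in $\M$, hence $b^*\tilde D_\delta b\ge0$, and positivity of $\omega$ gives $\omega(b^*\tilde D_\delta b)\ge0$, which is the claim. Combining this with $\omega(v_\delta)=\langle G,v_\delta\rangle_\F$, everything is reduced to the noncommutative ``$h$-transform'' (ground-state-transform) estimate
\[
\big\langle G,\ b^*(G+\delta)^{-1}b\big\rangle_\F\ \le\ \|b\|_\F^2\qquad(b\in\widetilde\B,\ \delta>0),
\]
the counterpart of the classical $\E_1\!\big(g,\tfrac{f^2}{g+\delta}\big)\le\E_1(f)$. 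Granting it one obtains $\omega(b^*b)\le(M+\delta)\|b\|_\F^2$ for all $\delta>0$, and letting $\delta\downarrow0$ finishes the proof. (The general case of Theorem~\ref{Deny} is then deduced from this special one by the resolvent approximations of Proposition~\ref{omegaepsilon}.)

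The $h$-transform estimate is the technical core, and the main obstacle is the non-commutativity: one cannot bound $\E(G,v_\delta)$ and $(G,v_\delta)_2$ separately, because $(G,v_\delta)_2$ alone is \emph{not} dominated by $\|b\|_2^2$ — the two terms must compensate, exactly as the decomposition $g=(g+\delta)-\delta$ does classically. The route I would take is to use the canonical bounded derivation $\partial\colon\widetilde\B\to\H$ of Section~2, for which $\E(x,y)=\langle\partial x,\partial y\rangle_\H$, $\partial(x^*)=\J\partial x$, and $\partial\big((G+\delta)^{-1}\big)=-(G+\delta)^{-1}(\partial G)(G+\delta)^{-1}$. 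Setting $\psi:=(G+\delta)^{-1}$, $\xi:=\partial G$ (which is $\J$-invariant since $G=G^*$) and $\zeta:=\psi\,b\,\xi\in\H$, expanding $\partial v_\delta$ by the Leibniz rule and using reality of $\E$ yields
\[
\E\big(G,\,b^*\psi b\big)\ =\ 2\,\mathrm{Re}\,\langle\zeta,\partial b\rangle_\H\ -\ \langle\xi,\,b^*\psi\,\xi\,\psi b\rangle_\H\,.
\]
Completing the square $2\,\mathrm{Re}\,\langle\zeta,\partial b\rangle_\H=\|\partial b\|_\H^2+\|\zeta\|_\H^2-\|\partial b-\zeta\|_\H^2$ then reduces the desired inequality to
\[
\big(\|\zeta\|_\H^2-\langle\xi,b^*\psi\xi\psi b\rangle_\H\big)\ +\ \big(G,\,b^*\psi b\big)_2\ \le\ \|b\|_2^2\ +\ \|\partial b-\zeta\|_\H^2\,,
\]
an identity that in the commutative case collapses (the first bracket vanishes and $(G,v_\delta)_2=\int\tfrac{g}{g+\delta}f^2\le\|f\|_2^2$). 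In the light of the Beurling–Deny representation of the \emph{bounded} completely Dirichlet form $\E$ — whose diffusion part vanishes — this last estimate is an Araki/AM–GM-type inequality on the underlying jump kernel, the noncommutative analogue of the pointwise bound $(a-b)\big(\tfrac{s^2}{a}-\tfrac{t^2}{b}\big)\le(s-t)^2$. Carrying out this final step with the careful bookkeeping of the non-commuting factors $b,G,(G+\delta)^{-1}$ is the delicate point of the whole argument.
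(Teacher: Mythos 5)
Your reduction is fine as far as it goes: the identification $\omega(v)=\langle G,v\rangle_\F$ on $\widetilde\B$, the operator inequality $(M+\delta)(G+\delta)^{-1}\ge 1_\M$ giving $\omega(b^*b)\le(M+\delta)\,\omega\bigl(b^*(G+\delta)^{-1}b\bigr)$, and the membership $b^*(G+\delta)^{-1}b\in\widetilde\B$ are all correct and cheap. But the entire content of the proposition has been pushed into the ``$h$-transform'' estimate $\bigl\langle G,\,b^*(G+\delta)^{-1}b\bigr\rangle_\F\le\|b\|_\F^2$, and that estimate is never proved. It is exactly the Deny inequality of Theorem 4.5 (equation (\ref{denyeq})), and in the paper the logical order is the reverse of yours: the first particular case of Theorem 4.5 is \emph{deduced from} Proposition \ref{casparticulier} (it ``is just (4.13) or (4.14) at the end of the proof''). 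So you may not quote it, and your sketch of a direct proof does not close: after the Leibniz expansion and completion of the square you are left with an inequality that you yourself flag as ``the delicate point,'' and the tool you propose for it --- a Beurling--Deny decomposition of the bounded completely Dirichlet form into a jump part with a kernel, on which a pointwise AM--GM inequality such as $(a-b)(s^2/a-t^2/b)\le(s-t)^2$ could act --- is not available in this noncommutative setting. The differential calculus of Section 5 gives only a derivation into an abstract Hilbert bimodule $\H$; there is no jump kernel, and the non-commuting factors $b$, $G$, $(G+\delta)^{-1}$ prevent the classical pointwise argument from even being formulated. This is a genuine gap, not a bookkeeping issue.

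For comparison, the paper's proof takes a completely different route that avoids the $h$-transform altogether: it builds the completely positive kernel $V(x)=G(\rho_x)$ with $\rho_x(y)=(Jx^*\Omega,y\Omega)_2$, uses $V(1_\M)=G(\omega)=\beta^*\beta$ to factor $V(x)=\beta^*W(x)\beta$ through a unital completely positive map $W$, shows $W$ is KMS-symmetric with respect to $\omega'=\rho_{G(\omega)}$ and hence a contraction of $L^2(A,\omega')$, obtains $\|V(x)\|_\F\le\|x\Omega'\|_2$, and concludes by a duality argument together with the nonsingularity of $\beta$ (which is where the faithfulness of $\omega$ and the boundedness of $\E$ are actually used --- two hypotheses your sketch never invokes in the hard step). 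If you want to salvage your plan, you would need an independent proof of the Deny inequality for bounded forms; as written, the argument assumes what is to be proved.
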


\smallskip
 \begin{proof} The proof proceeds in several steps.

\medskip\noindent
{\it Step 1. Construction of a completely positive kernel.}

\smallskip\noindent
Notice first that, by assumption, there exist $h\in L^1_+(A,\tau)$ such that $\omega(x)=\tau(hx)$ for $x\in \M$. In this case one may realize the G.N.S. representation of $\omega$ in the Hilbert space $L^2 (A,\tau)$ setting $\Omega :=h^{1/2}\in L^2_+(A,\tau)$
\[
\omega(b)=(\Omega, b\,\Omega)_2\qquad b\in \M\, .
\]
One checks easily that $G(\omega)=(I+L)^{-1}h\in L^2 (A,\tau)\cap\M$ and that it is nonsingular: in fact, if $p\in\M$ is the support projection of $G(\omega)$ in $\M$, one has
\[
0=\tau\big(G(\omega)(1_\M-p)\big)=\omega\big((I+L)^{-1}(1_\M-p)\big)\, ,
\]
hence $(I+L)^{-1}(1_\M-p)=0$ by faithfulness of $\omega$ so that $p=1_\M$.
\vskip0.1truecm\noindent
For $x\in \M$, denote $\rho_x\in\M_*$ the $\sigma$-weakly continuous linear form on $\M$ defined by
\[
\rho_x(y)=(Jx^*\Omega,y\Omega)_{2}\qquad y\in\M\,.
\]
By the properties of the standard forms of von Neumann algebras (see \cite{Ara}), if $x\in\M_+$ then
\[
\rho_x(y)=(Jx^*\Omega,y\Omega)_{2}\ge 0\qquad  y\in\M_+
\]
so that $\rho_x\in\M_{*+}$.
The map $\M\ni x\to\rho_x\in\M_*$ is antilinear, $\sigma(\M,\M_*)$-$\sigma(\M_*,\M)$ continuous and satisfies
\[
0\leq \rho_x\leq ||x||\cdot\omega\qquad x\in \M_+\, .
\]
Notice that, since, by assumption, $\E$ is bounded, we have $\F =L^2 (A,\tau)$ and $\widetilde \B = L^2 (A,\tau)\cap\M$. Applying proposition \ref{compare}, we get that $\rho_x$ has finite energy and
\[
G(\rho_x)\leq ||x|| G(\omega)\qquad x\in\M_+\,.
 \]
Since, by assumption, the potential of $\omega$ is bounded, $G(\omega)\in\widetilde\B =L^2(A,\tau)\cap\M$, we have a well defined $\sigma$-weakly continuous, positive linear map $V:\M\to\M$ characterized  by
\[
V(x):=G(\rho_x)\qquad x\in \M_+
\]
and satisfying $V(x)\in \widetilde \B = L^2 (A,\tau)\cap\M$ as well as
\begin{equation}
(Jx^*\Omega,b\Omega)_2=\rho_x(b)=\big<V(x)\,,\,b\big>_\F\qquad x\in\M\, ,\quad b\in \widetilde\B\,.
\end{equation}
We now proceed to check that $V:\M\rightarrow\M$ is a completely positive map. We first check that $V$ is completely positive when considered as a map $V:\M\rightarrow\F$ between the ordered Banach spaces $\M$ and $\F$: for $b_1,\dots,b_n\in\widetilde \B$, $c_1,\dots,c_n\in \M$\,, we compute
\begin{equation*}\begin{split}
\sum_{i,j}\big<V(c_i^*c_j)\,,\,b_i^*b_j\big>_\F &= (Jc_j^*c_i\Omega\,,\,b_i^*b_j\Omega)_2 \\
&=\sum_{i,j}(b_iJc_i\Omega\,,\,b_jJc_j\Omega)_2 \\
&=\|\sum_i\,b_iJc_i\Omega\|_2^2 \geq 0\,.
\end{split}\end{equation*}
This means that, not only $V(x)\in\widetilde\B$ is a potential for any $x\in\M_+$, so that it is positive in $\M$, because of Proposition \ref{positivity}, but also the matrix $[V(c_i^* c_j)]_{i,j=1}^n\in \mathbb{M}_n (\widetilde\B)$ is positive in $\mathbb{M}_n (\M)$ just applying again Proposition \ref{positivity} to the matrix ampliation $\E_n$ of the complete Dirichlet form $\E$ to $L^2 (\mathbb{M}_n(A),\tau_n )$ described, in Definition 2.1 iii).

\smallskip\noindent
Notice that $V(1_\M)=G(\omega)$ so that the endomorphism $V:\M\to\M$ has norm not greater than $\|G(\omega)\|_\M$. More precisely, for $x=x^*\in\M$ one has
$$V(x_+)-V(x)=V(x_-)=G(\rho_{x_-})\geq 0$$
hence $V(x)\leq V(x_+)\leq ||x_+||\,G(\omega)$ and, for sake of symmetry,
\begin{equation}\label{Vborne}
-\|x_-\|\,G(\omega)\leq V(x)\leq \|x_+\|\,G(\omega)\qquad x=x^*\in \M\, .
\end{equation}

\medskip\noindent
{\it Step 2. Reduction of $V$ and $\omega$.}
\par\noindent
Let us consider now the normal, positive functional $\omega':=\rho_{G(\omega)}\in \M_{*+}=L^1_+(A,\tau)$. By the properties of standard forms of von Neumann algebras (see \cite{Ara}), there exists $\Omega'\in L^2_+(A,\tau)$ such that
\begin{equation}
\omega'(x)=(\Omega'\,,\,x\,\Omega' )_{2}\qquad x\in \M\,.
\end{equation}
Moreover, $\|x\Omega'\|^2_2=(x\Omega\,,\,JG(\omega)Jx\Omega)_2\leq \|G(\omega)\|_\M \, \|x\Omega\|^2_2$. Consequently, there exists $\beta'\in \M'$ (the von Neumann algebra commutant of $\M$ in $\B(L^2(A,\tau)$\,) such that $\Omega'=\beta'\,\Omega$ characterized by
\[
\beta' (x\Omega):=x\Omega'\qquad x\in\M\, .
\]

\smallskip\noindent
Notice that, as $\Omega$ and $\Omega'$ belong to the self-polar cone $L^2_+(A,\tau)$ of a standard form, one has $J\Omega=\Omega$ and $J\Omega'=\Omega'$. Setting $\beta=J\beta'J\in J\M'J=\M$, one has $\beta\Omega =\Omega'$.
\par\noindent
Notice also that, as $\omega$ and $\omega'$ are faithful states (by assumption for $\omega$, and by nonsingularity of $G(\omega)$ for $\omega'$) and the vectors $\Omega$ and $\Omega'$ are cyclic and separating, then $\beta$ and $\beta'$ act in $L^2(A,\tau)$ as one to one operators with dense range.
\smallskip\noindent
Then, for $x,y\in \M$ one has
\begin{equation*}\begin{split}
(y\Omega,{\beta'}^*\beta'x\Omega)_2 &= (y\beta'\Omega,x\beta'\Omega)_2 \\
&=(y\Omega'\,,x\Omega')_2 \\ &= \omega'(y^*x) = (JG(\omega) J\Omega,y^*x\Omega)_2 \\
&=(y\Omega\,,\,JG(\omega) J x\Omega)_2
\end{split}\end{equation*}
so that ${\beta'}^*\beta'=JG(\omega)J$ and, finally,
\begin{equation}
\beta^*\beta=G(\omega)\,.
\end{equation}

\smallskip\noindent
As $V$ is completely positive and $V(1_\M)=G(\omega)=\beta^*\beta$, with $\beta$ having initial and final support equal to $1_\M$, there will exist a $\sigma$-weakly continuous completely positive endomorphism $W:\M\to\M$ such that
\begin{equation}
V(x)=\beta^*W(x)\beta\qquad x\in\M\,.
\end{equation}
Moreover, $W(1_\M)=1_\M$, so that $W$ is a noncommutative Markov kernel and, in particular, a contraction of $\M$.

\medskip\noindent {\it Step 3. $\omega'$-KMS-symmetry and $L^2(A ,\omega')$-contractivity of $W$.}
\par\noindent
By the properties of standard forms (see \cite{Ara}), we have, for $x,y\in \M$, the identities
 \begin{equation*}\begin{split}
(  Jy\Omega' \,,\,  W(x)\Omega' )_2&= ( JyJ\beta\Omega  \,,\, W(x)\beta \Omega  )_2\\
&= (Jy\Omega \,,\, \beta^*W(x)\beta\Omega)_2\\
&=(Jy\Omega   \,,\, V(x)\Omega  )_2 \\
&=\big< V(y^*)  \,,\, V(x)  \big>_\F\\
&=\big< V(x^*)  \,,\, V(y)  \big>_\F\\
&=( Jx\Omega'  \,,\,W(y)\Omega')\\
&=( JW(y)\Omega' \,,\,  x\Omega' )_2\,.
\end{split}\end{equation*}
This reveals that $W$ is $\omega'$-KMS-symmetric so that it extends to a bounded map on $L^2(A, \omega')$ by [C2 Proposition 2.24]. As it is a contraction of $\M$, it will be also a contraction in $L^2(A, \omega')$. Alternatively, we can check the boundedness of the extension to $L^2(A, \omega')$ invoking the 2-positivity of $W$:
\begin{equation}\begin{split}
\|W(x)\Omega'\|_2&=(\Omega\,,\,W(x)^*W(x)\Omega')_2 \\
&\leq (\Omega'\,,\,W(x^*x)\Omega') \\
&=(JW(1_\M)\Omega'\,,\,x^*x\Omega')\\
&=(\Omega',x^*x\Omega')_2=\|x\Omega'\|_2^2\qquad x\in\M\,.
\end{split}\end{equation}

\smallskip\noindent Consider now $x\in \M$ and compute
\begin{equation*}\begin{split}
\big<V(x)\,,\,V(x)\big>_\F &= (Jx\Omega\,,\,V(x)\Omega)_{L^2} \\
&=(Jx\Omega'\,,\,W(x)\Omega')_2 \\
&\leq ||x\Omega'||_{L^2(A,\tau)}^2
\end{split}\end{equation*}
so that
\begin{equation}\label{ineqDeny1}
||V(x)||_\F\leq ||x\Omega'||_2\,,\;x\in \M\,.
\end{equation}

\medskip\noindent {\it End of the proof of the proposition.}
For $x$ and $y$ in $\M$, with $y$ such that $\beta^*y\beta\in L^2(A,\tau)$, one computes
\begin{equation*}\begin{split}
\big|\big<Jy\Omega'\,,x\Omega'\big>_2\big| &= \big|\big<J\beta^*y\beta \Omega\,,x\Omega\big>_2\big|\\
&=\big|\big<\beta^*y\beta\,,V(x)\big>_\F \big|\\
&\leq ||\beta^*y\beta||_\F \,||V(x)||_\F \\
&\leq ||\beta^*y\beta||_\F \,||x\Omega'||_2 \text{ by (\ref{ineqDeny1})}\,,
\end{split}\end{equation*}
which provides $||y\Omega'||_2 \leq ||\beta^*y\beta ||_\F$ for all $y\in\M$ and then
\begin{equation}\label{Deny0}
||y\beta\O||_2 \leq ||\beta^*y\beta ||_\F\qquad y\in\M\,.
\end{equation}
As we are assuming that the Dirichlet form is bounded, the $\|\cdot\|_\F$ norm is equivalent to the $L^2 (A,\tau)$ norm. Moreover, since the functional $\o$ is assumed to be faithful, the potential $G(\o)$ has been proved to be nonsingular and, since $\beta^*\beta =G(\o)$, $\beta\in\M$ is nonsingular too. Hence (\ref{Deny0}) extends as
\begin{equation}\label{Deny00}
||x\O||_2 \leq ||\beta^* x ||_\F\qquad x\in \F = L^2 (A,\tau )\,.
\end{equation}
Considering the polar decomposition, there exists a unitary $u\in\M$ such that $\beta^* = G(\o)^{1/2} u^*$ which implies
\begin{equation}\label{Deny000}
||x\O||_2 \leq ||G(\o)^{1/2} u^* x ||_\F\qquad x\in \F = L^2 (A,\tau )
\end{equation}
or
\begin{equation}\label{Deny0000}
||ux\O||_2 \leq ||G(\o)^{1/2} x ||_\F\qquad x\in \F = L^2 (A,\tau )
\end{equation}
and finally
\begin{equation}\label{Deny00000}
||x\O||_2 \leq ||G(\o)^{1/2} x ||_\F\qquad x\in \F = L^2 (A,\tau )
\end{equation}
which provides the result:
$$
\frac{1}{\|G(\o)\|_\M} \|x\O\|_2^2 \le \o (x^* G(\o)^{-1}x)\le \|x\|_\F^2\qquad x\in \F = L^2 (A,\tau )\, .
$$
\end{proof}

%

\medskip\noindent {\it Proof of the theorem.} For $\e>0$, the operator
$$
L_\e = L(I+\e L)^{-1}=\frac{1}{\e}\big(I-(I+\e L)^{-1}\big)
$$
acts as a bounded positive operator in $L^2(A,\tau)$, but also (as it is of the form {\it constant $\times \big($ identity - completely positive contraction $\big)$} it acts on $\M$ as the generator of a semigroup of symmetric completely positive contractions. This means that
\begin{equation}\label{approxDirichletform}
\E_\e\,: L^2 (A,\tau)\to [0,+\infty)\qquad \E_\e[\xi]=\big<\xi, L_\e \xi\big>_2\qquad\xi \in L^2(A,\tau)
\end{equation}
is a bounded symmetric Dirichlet form on $L^2(A,\tau)$.
\par\noindent
The associated Dirichlet space, denoted by $\F_\e$, is the vector space $L^2(A,\tau)$, equipped with the scalar product
$$
\big<\eta\,,\,\xi\big>_{\F_\e}=\big<\eta\,,\,(I+L_\e)\,\xi\big>_2\qquad \xi,\eta\in L^2(A,\tau)\, .
$$
Notice that
\begin{equation}\label{F}
||\xi||_\F=\lim_{\e\downarrow 0} ||\xi||_{\F_\e}\qquad\forall\,\xi\in \F\,.
\end{equation}

\medskip\noindent
Consider now the positive linear form $\widetilde \omega \circ (I+\e L)^{-1}$ on $\mathcal C$, with $\widetilde \omega$ provided by Proposition \ref{omegatilde}. It is well defined since $(I+\e L)^{-1}$ acts as a positive contraction on $L^2(A,\tau)$, hence as a positive contraction of $\F$ (since it commutes with $L$), but also as a $\sigma$-weakly continuous completely positive contraction of $\M$, so that it maps $\widetilde B$ into $\widetilde \B$ and $\mathcal C$ into itself. One has, for $b \in \widetilde \B$,
\begin{equation*}\begin{split}
\widetilde \omega ((I+\e L)^{-1}(b))&=\big<G(\omega)\,,\,(I+\e L)^{-1}b\big>_\F \\
&=\big<(I+\e L)^{-1}G(\omega)\,,\,b\big>_\F \\
&=\tau(h_\e b)
\end{split}\end{equation*}
with $h_\e=(I+L)(I+\e L)^{-1}G(\omega)$ well defined in $L^2(A,\tau)$, since $(I+L)(I+\e L)^{-1}$ is bounded.  One has $\tau(h_\e b)\geq 0$ whenever $b\geq 0$, and $|\tau(h_\e b)|\leq ||\widetilde \omega||_{{\mathcal C}^*}\,||b||_\M$ for any $b\in \widetilde \B$, so that $h_\e\in L^1(A,\tau)_+$ and that $\widetilde \omega \circ (I+\e L)^{-1}$ extends as a normal positive linear form on $\M$.
\par\noindent
The functional $\widetilde \omega \circ (I+\e L)^{-1}$ has finite energy with respect to the Dirichlet form $\E_\e$, and the corresponding potential is
\begin{equation*}\begin{split}
G_\e\big(\widetilde \omega \circ (I+\e L)^{-1}\big)&= (I+L_\e)^{-1}h_\e \\
&= (I+L_\e)^{-1}(I+L)(I+\e L)^{-1}G(\omega) \\
&=\frac{1}{1+\e}G(\omega) + \frac{\e}{1+\e}(1+(1+\e)L)^{-1}G(\omega)
\end{split}\end{equation*}
so that this potential is bounded, with
\begin{equation}\label{Gepsilon}
\big\Vert G_\e\big(\widetilde \omega \circ (I+\e L)^{-1}\big)\big\Vert_\M \leq ||G(\omega)||_\M\, , \; \forall\, \e>0\,.
\end{equation}

\medskip\noindent
As $A$ is separable, there will exist $h_0\in L^2(A,\tau)\cap L^1(A,\tau) \cap \M_+$ which acts as a nonsingular operator on $L^2(A,\tau)$. Let $\omega_0\in\M_{*+}$ be  the corresponding normal positive linear functional on $\M$ defined by $\omega_0(x)=\tau(h_0x)$ for $x\in \M$. Since $\omega_0$ is, by construction, faithful and has finite energy with respect to $\E_\e$, the corresponding potential $G_\e(\omega_0)=(I+L_\e)^{-1}h_0$ is thus bounded, with
\begin{equation}\label{Gepsilon0}
||G_\e(\omega_0)||_\M \leq ||h_0||_\M\,,\;\forall\,\e>0\,.
\end{equation}
Applying now Proposition \ref{casparticulier}  to the Dirichlet form $\E_\e$ and to the faithful, normal, positive linear functional $\widetilde \omega \circ (I+\e L)^{-1}+\e\omega_0\in\M_*$, for all $b\in\B$ we get
\begin{equation}\label{Deny3}\begin{split}
\widetilde \omega \big((I+\e L)^{-1}(b^*b)\big)+\e\omega_0(b^*b) &\leq \big\Vert G_\e( \widetilde \omega \circ (I+\e L)^{-1})+\e G_\e(\omega_0)\big\Vert_\M\, ||b||_{\F_\e}^2\\
&\leq \big(||G(\omega)||_\M+\e||h_0||_\M\big)\,||b||_{\F_\e}^2\,.
\end{split}\end{equation}
As $\e\to 0$, $||b||_{F_\e}^2$ tends to $||b||_\F$\,(cf. (\ref{F})). The convergence in the left hand side is a bit more delicate, since $\omega$ does not necessarily extends as a linear form on $\M$. Nevertheless, for $b\in \B$, we have
\begin{equation*}\begin{split} \lim_{\e \downarrow 0}
\widetilde \omega \big((I+\e L)^{-1}(b^*b)\big)&= \lim_{\e \downarrow 0} \big<G(\omega)\,,\,(I+\e L)^{-1}(b^*b)\big>_\F \\
&= \big<G(\omega)\,,\,b^*b)=\omega(b^*b)
\end{split}\end{equation*}
since $(I+\e L)^{-1}\xi\to \xi$ in $\F$ as $\e \downarrow 0$, for any $\xi\in \F$. Letting $\e\downarrow 0$ in (\ref{Deny3}), we get
$$\omega(b^*b)\leq ||G(\omega)||_\M\,||b||_\F^2\qquad\forall\,b\in \B$$ and the theorem is proved.
\par \hfill $\square$
\begin{rem}
The embedding provided by the above result allows to study the Dirichlet form $\E$ in the space $L^2 (A,\omega)$ of a finite-energy functional having bounded potential. For normal functionals $\omega (a)=\tau (ha)$ this is possible whenever $h\in L^2_+ (A,\tau)\cap \M$ because in that case $G(\omega)=(I+L)^{-1} h\in L^2_+ (A,\tau)\cap \M$. In the case of the Dirichlet integral of a Riemannian manifold $(M,g)$, the associated self-adjoint operator is unitarily equivalent to the Laplace-Beltrami operator of a metric $g'$ which is a conformal change of the original metric $g$. On the noncommutative two torus this point of view has been adopted to study conformal spectral invariants in the setting of Noncommutative Geometry (see \cite{CoTr}).
\end{rem}

The next observation is more important:

\begin{rem}
According to Lemma \ref{potmult}, for $b\in\widetilde \B$, the operator $b^*\frac{1}{G(\o) +\delta}b$ lies in the fine algebra $\C$. Passing to the increasing limit as $\delta\to 0$, one gets $b^*\frac{1}{G(\o) }b$ as a nonnegative operator affiliated to the enveloping von Neumann algebra $\C^{**}$ (cf. \cite{Haa2}).
\par\noindent
Consequently, for all $\o\in\C^*_+$, the quantity $\o (b^*\frac{1}{G(\o) }b )$ is well defined in the extended half line $[0,+\infty]$. In particular, if $\o\in A^*_+$ is a finite-energy functional, it extends as $\widetilde\o$ in $\C^*_+$ and the quantity $\widetilde\o (b^*\frac{1}{G(\o) }b )$ is well defined in the extended half line $[0,+\infty]$. The following Deny's inequality provides a universal bound for this quantity.
\end{rem}
\begin{thm}({\bf Deny's inequality})
For any finite-energy functional $\o\in A^*_+$ the following inequality holds true
\begin{equation}\label{denyeq}
\widetilde\o \Bigl(b^*\frac{1}{G(\o) }b \Bigr)\le \|b\|_\F^2\qquad b\in\widetilde\B\, .
\end{equation}
If the potential is bounded the the inequality is saturated by the choice $b=G(\o)$.
\end{thm}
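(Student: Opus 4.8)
The inequality to be proved is the saturated form of the embedding Theorem~\ref{Deny}, and the plan is to retrace the proof of that theorem while keeping track of the sharper estimate $\omega(b^*G(\omega)^{-1}b)\le\|b\|_\F^2$ instead of the cruder $\omega(b^*b)\le\|G(\omega)\|_\M\|b\|_\F^2$. The first move is to reduce to a bounded quantity: by the remark preceding the statement, $\widetilde\omega\bigl(b^*\tfrac{1}{G(\omega)}b\bigr)=\sup_{\delta>0}\widetilde\omega\bigl(b^*(G(\omega)+\delta)^{-1}b\bigr)$, so it suffices to prove the inequality with $(G(\omega)+\delta)^{-1}$ in place of $\tfrac{1}{G(\omega)}$ for each fixed $\delta>0$. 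For such a $\delta$, applying [\cite{CS1} Lemma~7.2] to $t\mapsto(t+\delta)^{-1}-\delta^{-1}$ (as in the proof of Lemma~\ref{potmult}) shows that $(G(\omega)+\delta)^{-1}$ differs from an element of the extended Dirichlet algebra $\widetilde\B$ by a scalar multiple of $1_\M$; since $\widetilde\B$ is a $*$-algebra, $b^*(G(\omega)+\delta)^{-1}b\in\widetilde\B$ for $b\in\widetilde\B$, and hence $\widetilde\omega\bigl(b^*(G(\omega)+\delta)^{-1}b\bigr)=\bigl\langle G(\omega),\,b^*(G(\omega)+\delta)^{-1}b\bigr\rangle_\F$ by Proposition~\ref{omegatilde}. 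So the whole statement is reduced, by monotone convergence as $\delta\downarrow 0$, to the single family of estimates $\widetilde\omega\bigl(b^*(G(\omega)+\delta)^{-1}b\bigr)\le\|b\|_\F^2$, $\delta>0$, $b\in\widetilde\B$.

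Next I would isolate and strengthen the special case treated in Proposition~\ref{casparticulier}. The computation at the very end of that proof already gives $\omega(x^*G(\omega)^{-1}x)\le\|x\|_\F^2$ whenever $\E$ is bounded and $\omega$ is a faithful normal functional with bounded potential. The boundedness of the potential can be removed here by truncating the density: writing $\omega(\cdot)=\tau(h\,\cdot)$ with $h\in L^1_+(A,\tau)$ non-singular, the functionals $\omega_n(\cdot)=\tau\bigl((h\wedge n)\cdot\bigr)$ increase to $\omega$, remain faithful and normal, have finite energy (Proposition~\ref{compare}) and bounded potential $G(\omega_n)=(I+L)^{-1}(h\wedge n)\in\widetilde\B$. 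Applying the special case to each $\omega_n$, using $G(\omega_n)\nearrow G(\omega)$ (hence $G(\omega_n)^{-1}\searrow G(\omega)^{-1}$) to write $\omega_n\bigl(b^*G(\omega)^{-1}b\bigr)\le\omega_n\bigl(b^*G(\omega_n)^{-1}b\bigr)\le\|b\|_\F^2$, and letting $n\to\infty$ by normality of the $\omega_n$ and monotone convergence, yields $\omega(x^*G(\omega)^{-1}x)\le\|x\|_\F^2$ for every faithful normal finite-energy functional $\omega$ on a bounded Dirichlet form.

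Finally I would run the $\e$-regularization of the proof of Theorem~\ref{Deny}: replace $\E$ by the bounded forms $\E_\e$ attached to $L_\e=L(I+\e L)^{-1}$ and apply the previous step (after a further density truncation, done at fixed $\e$ exactly as in the second paragraph) to $\E_\e$ and to the faithful normal functional $\widetilde\omega\circ(I+\e L)^{-1}+\e\omega_0$, whose $\E_\e$-potential is $\tfrac{1}{1+\e}G(\omega)+\tfrac{\e}{1+\e}\bigl(1+(1+\e)L\bigr)^{-1}G(\omega)+\e G_\e(\omega_0)$. One then discards the auxiliary summand $\e\omega_0$, invokes the domination properties of potentials of Lemma~\ref{potres} together with the uniform bounds $\|G_\e(\omega_0)\|_\M\le\|h_0\|_\M$ and $\|b\|_{\F_\e}\to\|b\|_\F$, and the facts that $\widetilde\omega\circ(I+\e L)^{-1}\to\widetilde\omega$ on $\C$ and $(I+\e L)^{-1}\xi\to\xi$ in $\F$, in order to let $\e\downarrow 0$ and recover $\widetilde\omega\bigl(b^*(G(\omega)+\delta)^{-1}b\bigr)\le\|b\|_\F^2$; taking the supremum over $\delta>0$ then closes the argument. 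The saturation statement is a direct computation: when $G(\omega)\in\widetilde\B$, one takes $b=G(\omega)$, so that $b^*\tfrac{1}{G(\omega)}b=G(\omega)$ by functional calculus on the positive self-adjoint operator $G(\omega)$, whence $\widetilde\omega\bigl(b^*\tfrac{1}{G(\omega)}b\bigr)=\widetilde\omega(G(\omega))=\langle G(\omega),G(\omega)\rangle_\F=\|b\|_\F^2$.

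The step I expect to be the main obstacle is this last limit passage. Because $G(\omega)$ need not be bounded, the $\E_\e$-potential $G_\e\bigl(\widetilde\omega\circ(I+\e L)^{-1}\bigr)$ is itself an unbounded positive element, and one must compare its inverse with $(G(\omega)+\delta)^{-1}$ in the operator order, uniformly in $\e$, so as to squeeze $\widetilde\omega\bigl(b^*(G(\omega)+\delta)^{-1}b\bigr)$ from below by a quantity converging to it as $\e\downarrow 0$. This is precisely where the domination $e^{-t(1+L)}G(\omega)\le G(\omega)$ and the resolvent estimate $\tfrac{1}{1+\e L}G(\omega)\le\tfrac{1}{1-\e}G(\omega)$ of Lemma~\ref{potres} are needed, and some care is required to keep every monotonicity (in $\e$, in the truncation parameter, and in $\delta$) pointing in the direction that makes the estimate usable.
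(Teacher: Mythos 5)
Your proposal is correct and follows essentially the same route as the paper's proof: it rests on the endpoint estimate of Proposition \ref{casparticulier}, removes the boundedness assumption on the potential by truncating the density ($h\wedge M$) combined with $G(\omega')\le G(\omega)$ and the operator anti-monotonicity of inversion, and reduces the general case to the bounded forms $\E_\e$ via the bound $G_\e(\omega_\e)\le\frac{1}{1-\e^2}G(\omega)$ coming from Lemma \ref{potres}. The only divergence is bookkeeping: the paper disposes of the faithfulness hypothesis in an intermediate particular case by adding $\e\omega_0$ there, whereas you keep faithfulness throughout and insert $\e\omega_0$ only at the final regularization step.
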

\begin{proof}
The proof goes through the discussion of several particular cases.
\par\noindent
First particular case: {\it the Dirichlet form $\E$ is bounded, the finite-energy functional $\o\in
A^*_+$ is bounded and its potential $G(\o)\in\P$ is bounded too.} In this case the inequality \ref{denyeq} is just (4.13) or (4.14)
 at the end of the proof of Proposition 4.2.
\par\noindent
Second particular case: {\it the Dirichlet form $\E$ is bounded, the potential $G(\o)\in\P$ of the finite-energy functional $\o\in
A^*_+$is bounded (but $\o$ is not necessarily bounded).} Choose a nonsingular $h_0\in L^1_+ (A,\tau)\cap \M\subset L^2 (A,\tau)$ and consider the functional $\o_0 (\cdot):=\tau (h_0\cdot)$. Then $\o_0$ is faithful, it has finite energy (since $h_0$ lies in $L^2 (A,\tau)$) and it has bounded potential $G(\o_0)=(I+L)^{-1}h_0$ (see Example 3.2). The first particular case applies to $\o +\e\o_0$ so that
$$
(\o +\e\o_0)\Bigl(b^* \frac{1}{G(\o)+\e G(\o_0) +\delta}b\Bigr)\le \|b\|_\F^2\qquad \e\, ,\,  \delta >0\, ,b\in\widetilde\B\, .
$$
Passing to the limit first as $\e\to 0$ and then as $\delta \to 0$ provides the result in this case.
\par\noindent
Third particular case: {\it the Dirichlet form $\E$ is bounded (but neither the finite-energy functional $\o\in A^*_+$ is assumed to be faithful nor its potential $G(\o)\in\P$ is assumed to be bounded).} As $\E$ is bounded, the generator $L$ is a bounded operator on $L^2 (A,\tau)$ so that $\o (\cdot)=\tau (h\cdot)$ where $h\in L^1_+ (A,\tau)\cap L^2 (A, \tau)$ and $h=(I+L)G(\o)$ for $G(\o)\in\P\subset L^2 (A,\tau)$. For any fixed $M>0$, consider $h_M := h\wedge M\in L^1_+ (A,\tau)\cap\M$ and the corresponding finite-energy functional $\o_M (\cdot):=\tau (h_M\cdot)$. One has $G(\o_M)=(I+L)^{-1}h_M \le (I+L)^{-1}h =G(\o)$. According to the second particular case
$$
\o_M \Bigl(b^* \frac{1}{G(\o) + \delta}b\Bigr)\le \o_M \Bigl(b^* \frac{1}{G(\o_M) + \delta}b\Bigr)\le \|b\|_\F^2\qquad \delta >0\, ,b\in\widetilde\B\, .
$$
Passing to the limit first $M\to +\infty$ and then $\delta\searrow 0$ one gets the result in case.
\par\noindent
General case: {\it $\E$ is any Dirichlet form and $\o\in A^*_+$ is any finite-energy functional.} For any $\e>0$, define the functional $\displaystyle \omega_\e=\omega \circ \frac{1}{1+\e L}$ and the bounded Dirichlet form $\mathcal E_\e$ with generator $\displaystyle \frac{L}{1+\e L}$. By Lemma (\ref{potres}), $\omega_\e$ has finite energy with respect to $\mathcal E$, and a fortiori with respect to $\E_\e$.

\smallskip
Let us identify for $b\in \widetilde {\mathcal B}$,
$$
\displaystyle \omega\big(\frac{1}{1+\e L}b)= \left\{\begin{matrix} \displaystyle
=\left<G(\omega),\frac{1}{1+\e L}b\right>_{\mathcal F}=\left<\frac{1}{1+\e L}G(\omega),b\right>_{\mathcal F} \\ \displaystyle
=\left<G_\e(\omega_\e),b\right>_{\mathcal F_\e}=\left<\frac{1}{1+\e L}\frac{1+(1+\e)L}{1+L}G_\e(\omega_e),b\right>_{\mathcal F}
\end{matrix}\right.
$$
so that we get, applying Lemma (\ref{potres}),
\begin{equation*}\begin{split}
G_\e(\omega_\e)&=\frac{1+L}{1+(1+\e)L}G(\omega) \\
&=\frac{1}{1+\e}G(\omega)+\frac{\e}{1+\e}\;\frac{1}{1+(1+\e)L}G(\omega) \\
&\leq{ \frac{1}{1+\e}G(\omega)+\frac{\e}{1+\e}\frac{1}{1-\e}G(\omega)}=\frac{1}{1-\e^2}G(\omega)\,.
\end{split}\end{equation*}
Now the previous particular case allows to write, for any $\delta>0$\,:
\begin{equation*}\begin{split}
(1-\e^2)\omega_\e\big(b^* \,\frac{1}{G(\omega)+\delta}\;b\big)\leq \omega_\e(b^* \frac{1}{G_\e(\omega_\e)}b)\leq ||b||_{\mathcal F_\e}^2\,.
\end{split}\end{equation*}
$$
\omega(b^* \frac{1}{G(\omega)+\delta}\,b)\leq ||b||_{\mathcal F}^2\,.
$$
Passing to the limit first as $\e\to 0$ and then as $\delta\to 0$ provides the result.
\end{proof}

\medskip
As a corollary of the generalized Deny's embedding theorem, we get the following bound which will be used below in Proposition \ref{boundedpotential} and Proposition \ref{critmolt}.
\begin{cor}\label{corDeny} Let us consider a bounded potential $G\in \P\cap \M=\P\cap \widetilde \B$. Then one has
\begin{equation}
\big<G,b^*b\big>_\F \leq ||G||_\M\,||b||_\F^2\,,\;\forall b\in \widetilde \B\,.
\end{equation}
\end{cor}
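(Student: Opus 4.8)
The plan is to deduce the inequality from Proposition~\ref{casparticulier} by a two‑step approximation. A direct appeal to Theorem~\ref{Deny} is \emph{not} available here: the positive functional $b\mapsto\langle G,b\rangle_\F$ on $\B$ is $\F$‑continuous but need not be bounded in the C$^*$‑norm of $A$, so a bounded potential $G$ need not be the potential of a finite‑energy functional on $A$ in the sense of the definition. Instead one approximates $G$ \emph{from below} by potentials of genuine normal finite‑energy functionals and passes to the limit.

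\emph{The case of a bounded form.} Suppose first that $\E$ is bounded, so that $L$ is bounded, $\F=L^2(A,\tau)$ and $\widetilde\B=L^2(A,\tau)\cap\M$. Then $h:=(I+L)G$ lies in $\M$, and it lies in $L^2_+(A,\tau)$ because $(h,\eta)_2=\langle G,\eta\rangle_\F\ge 0$ for every $\eta\in\F_+=L^2_+(A,\tau)$, a self‑polar cone. By semifiniteness of $\tau$ and separability of $A$ pick projections $e_n\in\M$ with $\tau(e_n)<+\infty$ and $e_n\uparrow 1_\M$, and set $h_n:=h^{1/2}e_nh^{1/2}$, so that $0\le h_n\le h$, $h_n\in L^1_+(A,\tau)\cap\M$ and $h_n\to h$ in $L^2(A,\tau)$. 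The normal positive functionals $\omega_n:=\tau(h_n\,\cdot\,)$ are then finite‑energy, with bounded potential $G(\omega_n)=(I+L)^{-1}h_n\le G$ (hence $\|G(\omega_n)\|_\M\le\|G\|_\M$), and $G(\omega_n)\to G$ in $\F$ since $\|(I+L)^{-1}\xi\|_\F\le\|\xi\|_2$. Fixing a faithful normal finite‑energy functional $\omega_0$ with bounded potential (e.g. $\omega_0=\tau(h_0\,\cdot\,)$ with $h_0\in L^1_+\cap L^2_+\cap\M$ nonsingular), Proposition~\ref{casparticulier} applied to the faithful functional $\omega_n+\e\omega_0$ gives, for $b\in\widetilde\B$,
\[
\omega_n(b^*b)\ \le\ \bigl(\|G\|_\M+\e\,\|G(\omega_0)\|_\M\bigr)\,\|b\|_\F^2 .
\]
Letting $\e\downarrow 0$, using $\omega_n(b^*b)=\langle G(\omega_n),b^*b\rangle_\F$, and then letting $n\to\infty$ with $G(\omega_n)\to G$ in $\F$, one obtains $\langle G,b^*b\rangle_\F\le\|G\|_\M\|b\|_\F^2$.

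\emph{The general case.} Now run the regularization used in the proof of Theorem~\ref{Deny}: let $\E_\e$ be the bounded Dirichlet form with generator $L_\e=L(I+\e L)^{-1}$ and Dirichlet space $\F_\e=L^2(A,\tau)$, recalling that $\|b\|_\F=\lim_\eto\|b\|_{\F_\e}$ for $b\in\F$. Since $G\in\P$ one has $(I+\e L)^{-1}G\in\P$ by Lemma~\ref{potres}, hence $h_\e:=(I+L)(I+\e L)^{-1}G\in L^2_+(A,\tau)\cap\M$ (again $(h_\e,\eta)_2=\langle(I+\e L)^{-1}G,\eta\rangle_\F\ge 0$ for $\eta\in\F_+$, which is $L^2$‑dense in $L^2_+(A,\tau)$). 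Consequently
\[
\tilde G_\e:=(I+L_\e)^{-1}h_\e=\tfrac{1}{1+\e}\,G+\tfrac{\e}{1+\e}\,(1+(1+\e)L)^{-1}G
\]
is a potential for $\E_\e$ with $\|\tilde G_\e\|_\M\le\|G\|_\M$. Applying the bounded‑form case to $(\E_\e,\tilde G_\e)$ yields $\langle\tilde G_\e,b^*b\rangle_{\F_\e}\le\|G\|_\M\|b\|_{\F_\e}^2$ for $b\in\widetilde\B$. Since $\langle\tilde G_\e,b^*b\rangle_{\F_\e}=\tau(h_\e b^*b)=\langle(I+\e L)^{-1}G,b^*b\rangle_\F$, which converges to $\langle G,b^*b\rangle_\F$ as $\e\downarrow 0$ (the resolvent tending strongly to the identity on $\F$), while $\|b\|_{\F_\e}\to\|b\|_\F$, the inequality $\langle G,b^*b\rangle_\F\le\|G\|_\M\|b\|_\F^2$ follows.

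The main obstacle, and the only genuinely non‑routine point, is precisely the unavailability of a one‑line reduction to the embedding theorem: a bounded potential need not come from an $A$‑bounded functional, so one must manufacture the dominated $L^1$‑approximants $h_n\le h$ (resp. the bounded‑form potentials $\tilde G_\e$ with $\|\tilde G_\e\|_\M\le\|G\|_\M$ and $\langle\tilde G_\e,\,\cdot\,\rangle_{\F_\e}\to\langle G,\,\cdot\,\rangle_\F$) whose potentials are controlled by $\|G\|_\M$ and converge to $G$ in the graph norm. The remaining verifications — that $\omega_n$ is finite‑energy with $G(\omega_n)=(I+L)^{-1}h_n$, that $G(\omega_n)\to G$ in $\F$, the identification $\langle G(\omega_n),x\rangle_\F=\tau(h_nx)$ and $\langle\tilde G_\e,b^*b\rangle_{\F_\e}=\langle(I+\e L)^{-1}G,b^*b\rangle_\F$ for $x,b^*b\in\widetilde\B$ — are elementary manipulations with the resolvent.
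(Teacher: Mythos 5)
Your proof is correct and follows essentially the same route as the paper's: both approximate the bounded potential $G$ by potentials of genuine normal finite-energy functionals whose $\M$-norms are controlled by $\|G\|_\M$, apply the Deny embedding (you via Proposition~\ref{casparticulier} after an explicit reduction to the bounded forms $\E_\e$, the paper by invoking Theorem~\ref{Deny} on the truncated functionals $\tau(p_\delta h_\e\,\cdot)$ with $h_\e=(I+L)(I+\e L)^{-1}G$), and then pass to the limit in $\F$. The differences are only organizational: the paper truncates $h_\e$ by its spectral projections onto $[\delta,+\infty)$ rather than by $h^{1/2}e_nh^{1/2}$ with finite-trace projections, and reuses Theorem~\ref{Deny} as a black box instead of re-running its $\E_\e$-regularization and faithful perturbation by hand.
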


\smallskip
\begin{proof} When $G=G(\omega)$, with $\omega\in A^*_+$ having finite energy, this is exactly Theorem \ref{Deny}. Now, fix $\e>0$ and consider $G_\e=(I+\e L)^{-1}G$, $h_\e=(I+L)G_\e$. By proposition \ref{omegaepsilon} we have $h_\e\in L^2(A,\tau)_+$\,.

For $\delta>0$, let $p_\delta$ be the spectral projection of $h_\e$ corresponding to the interval $[\delta,+\infty[$. Then, $p_\delta h_\e\in L^1(A,\tau)_+$ and the corresponding linear form $b\to \tau(p_\delta h_\e\,b)$ has a potential $G_{\e,\delta}$ equal to
$$G_{\e,\delta}=(I+L)^{-1}(p_\delta h_\e)\leq (I+L)^{-1}h_\e =G_\e\,.$$
Theorem \ref{Deny} applied to this linear form provides
\begin{equation}
\big<G_{\e,\delta},b^*b\big>_\F \leq ||G||_\M\,||b||_\F^2\,,\qquad\forall b\in \widetilde \B
\end{equation}
since $||G_{\e,\delta}||_\M \leq ||G_\e||_\M\leq ||G||_\M$. The convergence in $\F$, $\lim_{\delta\to 0}G_{\e,\delta}=G_\e$,  is obvious  and we already noticed that $G_\e\to G$ in $\F$ as $\e\to 0$. Hence the result.

\end{proof}

 \section{Energy functionals or  "carr\'{e} du champ" of Dirichlet spaces.}

 \medskip

A Dirichlet forms $(\E ,\F )$ on the space $L^2 (A,\tau)$ of a faithful, semifinite, lower semicontinuous, positive trace $\tau$ on a C$^*$-algebra $A$, gives rise to a family of positive functionals $\{\Gamma [a]\in A^*_+ :a\in\F\}$, called {\it carr\'{e} du champ}, from which the quadratic form can be recovered as
\[
\E [a]=\langle\Gamma [a] ,1_{A^{**}}\rangle\, .
\]
In the noncommutative setting they were introduced in [CS1] to analyze the structure of Dirichlet forms on possibly noncommutative C$^*$-algebras. In the commutative case, where $A=C_0 (X)$, they were defined by Y. Le Jan \cite{LJ} as {\it energy measures}. This appellation being justified by the fact that in applications the positive measure $\Gamma [a]$ may represents the energy distributions over $X$ of the finite-energy configuration $a\in\F$.
\par\noindent
Since in case of the Dirichlet integral on a Riemannian manifold $M$ with measure $m$ one has $\Gamma[a]=|\nabla a|^2\,\cdot m$, they are often called {\it "carr\'{e} du champ"} (even if in general the measure $\Gamma[a]$ is not absolutely continuous with respect to the reference measure of the space $X$).
\vskip0.2truecm\noindent
In this section we show that the carr\'{e} du champ $\Gamma [G]$ of bounded potentials $G\in \P\cap\mathcal{M}$ form a natural class of finite-energy functionals, intimately associated to a Dirichlet space.

%


\medskip\subsection{Energy functionals of a Dirichlet space.}

\begin{defn} (Carr\'e du champ \cite{CS1}).
The {\it carr\'e du champ} $\Gamma[a]\in A^*_+$ of $a\in\B$ is the functional on $A$ defined by
\begin{equation}\label{energyfunctional1}
\big<\Gamma[a],b\big>:=\undemi\bigl(\E(a,ab\star)+\E(ab\star,a)-\E(b\star,a\star a)\bigr)\qquad b\in\B\, .
\end{equation}
It can be shown (see \cite{CS1}) that $\Gamma[a]$ is a bounded positive functional whose norm is $\E[a]$.
\end{defn}

In order to extend the definition to all elements $a\in\F$ of the Dirichlet space and to give a short proof of the main result of this section,
we briefly recall the main properties of the differential calculus associated to a regular Dirichlet form (see \cite{CS1}, \cite{C2}),
in terms of which an alternative and more manageable form of $\Gamma[a]$ can be given.
\par\noindent
Any regular Dirichlet form $(\E ,\F)$ on $L^2 (A,\tau)$ can be described as
\[
\E[a]=\|\partial a\|^2_\H\qquad a\in \F
\]
by a map $\partial :\F\to\H$ which is closed on $L^2 (A,\tau)$, takes its values in a Hilbert $A$-$A$-bimodule $\H$ and which is a derivation on the Dirichlet algebra $\B\subseteq\F$, in the sense that satisfies the Liebniz rule
\[
\partial (ab)=(\partial a)\cdot b + a\cdot (\partial b)\qquad a,b\in\B
\]
(the dots denote the left and right actions of elements of $\B$ on vectors in $\H$). Moreover, on the bimodule there exists a symmetry $\J :\H\to\H$, i.e. an antiunitary involution which intertwines the left and right actions of $A$
\[
\J (a\xi b)=b^* (\J\xi)a^*\qquad a,b\in A\, ,\quad \xi\in\H\, ,
\]
such that
\[
\partial (a^*)=\J (\partial a)\qquad a\in A\, .
\]
Summarizing, one describes the self-adjoint, nonnegative operator $L$ on $L^2 (A,\tau)$ whose quadratic form is the Dirichlet form $(\E,\F)$ as the {\it divergence of a derivation}: $L=\partial^*\circ\partial$ or, in other words, one can refers to the derivation as the {\it differential square root of the generator $L$}. The derivation representing a regular Dirichlet form is essentially unique (see \cite{CS1} Theorem 8.3 for details).

\begin{ex}{\bf Derivation associated to negative definite functions on group C$^*$-algebras.}
In Example 2.5 we considered the Dirichlet form $\E_\ell$ on the reduced group C$^*$-algebra C$^*_{red} (G)$ of a locally compact group $G$, associated to a continuous negative definite functions $\ell :G\to [0,+\infty)$. To describe the derivation it gives rise, recall that there exists a 1-cocyle $(\pi ,\K ,c)$, where $\pi :G\to\K$ is an orthogonal representation of $G$ in some real Hilbert space $\K$ and $c:G\to\K$ is a continuous function satisfying
\[
c(st)=c(s)+\pi (s)c(t)\qquad s,t\in G\, ,
\]
such that $\ell (s)=\|c(s)\|_\K^2$ for all $s\in G$. Denote by $\K_\mathbb{C}$ the complexification of the real Hilbert space $\K$ and by $\K_\mathbb{C}\ni\xi\mapsto \overline{\xi}\in\K_\mathbb{C}$ its canonical conjugation. The tensor product of complex Hilbert spaces $\K_\mathbb{C}\otimes L^2 (G)$ is a C$^*_{red} (G)$-bimodule under the commuting actions $\pi_l :=\pi\otimes\lambda$ and $\pi_r :=id\otimes \rho$ constructed by the left and right regular representations $\lambda\, ,\rho$ of  C$^*_{red} (G)$ in $L^2 (G)$. This bimodule structure turns out to be symmetric with respect to the anti-linear involution given by
\[
\J (\xi\otimes a):= \overline{\xi}\otimes J(a)\qquad \xi\otimes a\in \K_\mathbb{C}\otimes L^2(G)\, ,
\]
where $J(a)(s)=\overline{a(s^{-1})}$, $s\in\Gamma$, is just the involution associated to the standard cone of positive definite functions in $L^2 (G)$.
As customary, the same symbol $\pi$ will denote both the unitary representation of $\Gamma$ and the induced representation of $C\star_{red}(\Gamma)$.
The map $\partial :D(\partial )\to \K_\mathbb{C}\otimes L^2 (G)$ defined by
\[
\qquad D(\partial):=C_c (G)\, ,\qquad \partial (a):=c\otimes f\, ,\qquad a\in C_c (G)\, ,
\]
is the a closable derivation such that
\[
\E [a]=\| \partial a\|^2_{\K_\mathbb{C}\otimes L^2 (G)}\qquad a\in D(\partial)\subseteq \F_\ell\, .
\]
See \cite{CS1}, \cite{C2} for the details.
\end{ex}

\begin{ex}{\bf Derivation on noncommutative tori.}
The derivation associated to the Dirichlet form we introduced in Section 2 Example 2.6 and given by
\[
\E\Bigl[\sum_{n,m\in\mathbb{Z}}\alpha_{n,m}U^nV^m\Bigr] =
\sum_{n,m\in\mathbb{Z}}(n^2+m^2)|\alpha_{n,m}|^2
\]
on the noncommutative torus $A_\theta$ is the direct sum
\[
\partial(a)=\partial_1(a)\oplus \partial_2(a)
\]
of the following derivations $\partial_1$ and $\partial_2$ defined by
\[
\partial_1(U^nV^m)=inU^nV^m \; , \qquad
\partial_1(U^nV^m)=imU^nV^m\qquad n,m\in
\mathbb{Z}\, .
\]
The $A_\theta$--bimodule $\H$ associated with $\E$ is a sub-bimodule of the direct sum $L^2(A,\tau)\oplus L^2(A,\tau)$ of two copies of the standard $A_\theta$-bimodule.
\end{ex}

\vskip0.2truecm\noindent
The following lemma contains consequences of the crucial observation that a Dirichlet form which is regular with respect to the C$^*$-algebra $A$ is also automatically regular with respect to the fine C$^*$algebra $\mathcal{C}$.
\begin{lem}\label{finecalculus}
Let $(\E ,\F)$ be a Dirichlet form on $L^2 (A,\tau)$ which is regular with respect to the C$^*$-algebra $A$.
\par\noindent
Then the trace $\tau$ on $A$ naturally extends to a trace on the fine C$^*$-algebra $\mathcal{C}$ so that the G.N.S. representation of $(\mathcal{C},\tau)$ is an extension of the G.N.S. representation of $(A,\tau)$ and, in particular, $L^2 (\mathcal{C},\tau) = L^2 (A,\tau) =L^2 (\M,\tau)$.
\par\noindent
Moreover, since $\C\cap\F\supseteq \widetilde{\B}\cap\F = \widetilde{\B}$, the Dirichlet form $(\E,\F)$ is also regular with respect to the C$^*$-algebra $\mathcal{C}$.
\par\noindent
As a consequence, the differential calculus $(\widetilde{\partial},\widetilde{\B},\widetilde{\H},\widetilde{\J})$, associated to $(\E,\F)$ on $(\C,\tau)$ is an extension of the corresponding one $(\partial , \B ,\H ,\J)$ on $(A,\tau)$. In particular, once these calculi have been identified, the Leibniz rule holds true on the extended Dirichlet algebra $\widetilde{\B}$
\[
\partial (ab)=(\partial a)\cdot b + a\cdot (\partial b)\qquad a,b\in\widetilde{\B}\, .
\]
\end{lem}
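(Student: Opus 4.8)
The plan is to prove the three assertions in order, each reducing to a standard structural fact together with the already-established properties of $\widetilde{\B}$ and $\mathcal{C}$. First I would extend the trace. Since $A\subseteq\widetilde{\B}\subseteq\mathcal{C}\subseteq\M$ and $\tau$ is already given as a normal, semifinite, faithful trace on $\M$, I would simply restrict the normal trace of $\M$ to $\mathcal{C}$: faithfulness and the tracial identity are inherited at once; semifiniteness follows because the ideal of definition of $\tau$ on $A$ is norm-dense in $A$, hence already contained in $\mathcal{C}$; norm lower semicontinuity on $\mathcal{C}_+$ follows from $\sigma$-weak lower semicontinuity on $\M_+$, the norm topology being finer. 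For the identification of the G.N.S.\ spaces, the $\tau$-square-integrable elements of $A$ are dense, by normality of $\tau$ and $\sigma$-weak density of $A$ in $\M$, among the square-integrable elements of $\M$, hence a fortiori among those of the intermediate algebra $\mathcal{C}$; therefore $L^2(A,\tau)=L^2(\mathcal{C},\tau)=L^2(\M,\tau)$, and on this common space $\pi_\tau$ acts by left multiplication, so the G.N.S.\ representation of $(\mathcal{C},\tau)$ restricts to that of $(A,\tau)$.

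Next I would prove regularity with respect to $\mathcal{C}$. The first point is $\mathcal{C}\cap\F=\widetilde{\B}$: the inclusion $\widetilde{\B}=\M\cap\F\subseteq\mathcal{C}\cap\F$ holds because $\widetilde{\B}\subseteq\mathcal{C}$ by the definition of the fine algebra, and $\mathcal{C}\cap\F\subseteq\M\cap\F=\widetilde{\B}$ because $\mathcal{C}\subseteq\M$. Now $\widetilde{\B}$ is an involutive subalgebra of $\mathcal{C}$ by Definition 2.4, it is norm-dense in $\mathcal{C}$ by the very definition of $\mathcal{C}$, and it contains the Dirichlet algebra $\B=A\cap\F$, which is a form core for $(\E,\F)$; hence $\widetilde{\B}$ is a form core too. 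Finally $(\E,\F)$ is still a completely Dirichlet form on $L^2(\mathcal{C},\tau)=L^2(A,\tau)$: both the form and the Hilbert space are literally unchanged, and the matrix ampliations $\mathbb{M}_n(\mathcal{C})$ are $\sigma$-weakly dense in $\mathbb{M}_n(\M)$ with the same amplified trace $\tau_n$, so the Markovian estimates of Definition 2.1 transfer verbatim. Thus $(\E,\F)$ is a C$^*$-Dirichlet form on $(\mathcal{C},\tau)$.

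For the differential calculus I would apply the existence theorem of \cite{CS1} to the regular Dirichlet form $(\E,\F)$ on $(\mathcal{C},\tau)$, obtaining an essentially unique closed derivation $\widetilde{\partial}:\F\to\widetilde{\H}$ into a symmetric Hilbert $\mathcal{C}$-bimodule $(\widetilde{\H},\widetilde{\J})$, which is a derivation on $\widetilde{\B}$ with $\E[a]=\|\widetilde{\partial}a\|_{\widetilde{\H}}^2$. Restricting the $\mathcal{C}$-bimodule structure of $\widetilde{\H}$ to $A$, the closed sub-bimodule generated by $\widetilde{\partial}(\B)$, together with $\widetilde{\partial}|_{\B}$, is a differential calculus for $(\E,\F)$ regarded on $(A,\tau)$; by the essential uniqueness of the calculus (\cite{CS1} Theorem 8.3) it is isomorphic, through a bimodule isometry intertwining the symmetries, to $(\partial,\B,\H,\J)$. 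Identifying $\H$ with this sub-bimodule of $\widetilde{\H}$ and $\partial$ with the restriction of $\widetilde{\partial}$, the Leibniz rule $\partial(ab)=(\partial a)\cdot b+a\cdot(\partial b)$ for $a,b\in\widetilde{\B}$ is then exactly the Leibniz rule of $\widetilde{\partial}$ on the Dirichlet algebra $\widetilde{\B}$ of $(\mathcal{C},\tau)$, which holds by construction. As an alternative one could derive the Leibniz rule on $\widetilde{\B}$ from the one on $\B$ by approximating $a,b\in\widetilde{\B}$ in graph norm and $\M$-boundedly by sequences in $\B$ as in Lemma \ref{BdansBtilde}, using continuity of the bimodule actions and of multiplication $\widetilde{\B}\times\widetilde{\B}\to\widetilde{\B}$ under such approximations.

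The main obstacle is the identification in the third step: making precise the sense in which $(\widetilde{\partial},\widetilde{\H})$ extends $(\partial,\H)$, which forces one to invoke the essential-uniqueness theorem for the differential calculus of a regular Dirichlet form and to verify that the restriction of $\widetilde{\partial}$ to the smaller Dirichlet algebra $\B$ really produces a calculus of the type covered by that theorem. The first two steps are routine verifications with standard-form and normal-trace facts.
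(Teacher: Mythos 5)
Your proof is correct and follows essentially the same route as the paper's (which is far terser): restrict the normal extension of the trace on $\M$ to $\C$, observe that the Dirichlet algebra $\C\cap\F=\widetilde{\B}$ of $(\E,\F)$ relative to $(\C,\tau)$ is norm-dense in $\C$ and a form core, and invoke the existence and essential uniqueness of the differential calculus of \cite{CS1}. The one point the paper makes that you omit is that $\C$ need not be separable; the paper notes that $\C$ nevertheless acts on a separable Hilbert space and hence admits a faithful state, which is what justifies applying the framework of \cite{CS1} to $(\C,\tau)$.
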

\begin{proof}
Notice that, even if the fine C$^*$-algebra $\C$ need not to be separable, it acts, by definition, on a separable Hilbert space so that it admits a faithful state and the framework of \cite{CS1} applies.\par\noindent
The first statement concerning the trace  comes from the fact that, by definition, $A\subseteq \C\subseteq \M$ so that the normal extension of the trace $\tau$ to the von Neumann algebra $\M$ reduce to a trace on the subalgebra $\C$. The second one follows because, by definition, the Dirichlet algebra $\C\cap\F$ of $(\E ,\F)$ with respect to $(\C,\tau)$ contain the extended Dirichlet algebra $\widetilde{\B}$ and this one is, again by definition, dense in $\C$.
\end{proof}
As announced before, using the derivation associated to a Dirichlet space, one can readily give a definition of the energy functional $\Gamma[a]$ for all elements $a\in\F$ by
\begin{equation}\label{energyfunctional2}
\big<\Gamma[a], b\big>_{\C^*,\C}=\big<\partial a,(\partial a)\cdot b\big>_\H\qquad b\in \C \,.
\end{equation}
Using the Leibniz rule one can check that the above formula reduce to (\ref{energyfunctional1}) whenever $a,b\in\B$.

The following result shows that the family of finite-energy functionals include some natural functional deeply connected to the structure of the Dirichlet space.

\begin{prop}\label{boundedpotential} If $G$ is a bounded potential, $G\in \P\cap \M=\P\cap \widetilde \B$, its carr\'{e} du champ $\Gamma[G]\in\C^*_+$ is a finite-energy functional.
\end{prop}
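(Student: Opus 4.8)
The plan is to combine the defining formula (\ref{energyfunctional2}) of the carr\'e du champ with the Leibniz rule on the extended Dirichlet algebra (Lemma \ref{finecalculus}) and with the Deny-type bound of Corollary \ref{corDeny}, the decisive point being that, since $G$ is a potential, $b\mapsto\langle G,b\rangle_\F$ is a positive form on $\widetilde{\B}_+$ and, more precisely, $(c,d)\mapsto\langle G,c^*d\rangle_\F$ is a positive-semidefinite hermitian sesquilinear form on $\widetilde{\B}$.

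First I would record an identity. Since $G\in\widetilde{\B}$, for a self-adjoint $b\in\widetilde{\B}$ one has $Gb\in\widetilde{\B}$ and, by the Leibniz rule of Lemma \ref{finecalculus}, $(\partial G)\cdot b=\partial(Gb)-G\cdot(\partial b)$; inserting this into (\ref{energyfunctional2}) and using ($G$ being self-adjoint) the left-module adjunction $\langle\partial G,G\cdot\partial b\rangle_\H=\langle G\cdot\partial G,\partial b\rangle_\H$ gives
\[
\langle\Gamma[G],b\rangle=\E(G,Gb)-\langle G\cdot\partial G,\partial b\rangle_\H .
\]
The left-hand side is real (as $\Gamma[G]\in\C^*_+$ and $b=b^*$), and the module actions are $\|\cdot\|_\M$-contractive, so estimating $|\langle G\cdot\partial G,\partial b\rangle_\H|\le\|G\|_\M\sqrt{\E[G]}\,\sqrt{\E[b]}$ I obtain, for every self-adjoint $b\in\widetilde{\B}$,
\[
\langle\Gamma[G],b\rangle\ \le\ \mathrm{Re}\,\E(G,Gb)+\|G\|_\M\sqrt{\E[G]}\;\|b\|_\F .
\]

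For the remaining term I would use $\E(G,Gb)=\langle G,Gb\rangle_\F-(G,Gb)_2$ and note that for $b\in\widetilde{\B}_+$ one has $(G,Gb)_2=\tau(GbG)\ge 0$, so it suffices to bound $\langle G,Gb\rangle_\F$. Applying Cauchy--Schwarz for the positive-semidefinite form $\langle G,c^*d\rangle_\F$ with $c=G$, $d=b$, and then Corollary \ref{corDeny} twice (with the arguments $G\in\widetilde{\B}$ and $b\in\widetilde{\B}$),
\[
\big|\langle G,Gb\rangle_\F\big|^2\ \le\ \langle G,G^*G\rangle_\F\,\langle G,b^*b\rangle_\F\ \le\ \big(\|G\|_\M\|G\|_\F^2\big)\big(\|G\|_\M\|b\|_\F^2\big),
\]
hence $|\langle G,Gb\rangle_\F|\le\|G\|_\M\|G\|_\F\,\|b\|_\F$. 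Combining the displays,
\[
\langle\Gamma[G],b\rangle\ \le\ \|G\|_\M\big(\|G\|_\F+\sqrt{\E[G]}\big)\,\|b\|_\F\qquad b\in\widetilde{\B}_+ .
\]
Since, by Lemma \ref{finecalculus}, $\widetilde{\B}$ is the Dirichlet algebra of $(\E,\F)$ relative to the fine C$^*$-algebra $\C$, this is precisely the statement that $\Gamma[G]\in\C^*_+$ is a finite-energy functional; in particular its restriction to $A$ is a finite-energy functional in the sense of Definition 3.1.

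The step I expect to be the real obstacle is getting an estimate that does not close into a loop: the naive bound $|\E(G,Gb)|\le\sqrt{\E[G]\,\E[Gb]}$ forces one to control $\E[Gb]=\|(\partial G)\cdot b+G\cdot\partial b\|_\H^2$, which reintroduces $\langle\Gamma[G],b^2\rangle$ and leads nowhere --- a reflection of the fact that $\Gamma[G]$ may be singular and carries no density relative to the trace. The way around it is to replace Cauchy--Schwarz for $\E$ by Cauchy--Schwarz for the form $\langle G,c^*d\rangle_\F$, which is available exactly because $G$ is a potential, and to feed it the Deny embedding bound of Corollary \ref{corDeny}; no regularisation or limiting procedure is then needed. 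The one routine verification to be careful about is that $(c,d)\mapsto\langle G,c^*d\rangle_\F$ is genuinely hermitian, which uses the reality of $\E$ together with the trace property of $\tau$.
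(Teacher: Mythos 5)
Your argument is correct and is essentially the proof in the paper: the same Leibniz-rule decomposition $\langle\Gamma[G],b\rangle=\E(G,Gb)-\langle G\cdot\partial G,\partial b\rangle_\H$, the same Cauchy--Schwarz inequality for the positive form $b\mapsto\langle G,b\rangle_\F$ determined by the potential, and the same double application of Corollary \ref{corDeny}. The only (welcome) difference is that you make explicit the step $(G,Gb)_2=\tau(GbG)\ge 0$ for $b\in\widetilde\B_+$, which the paper uses tacitly when passing from $\E(G,Gb)$ to $\langle G,Gb\rangle_\F$.
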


\begin{proof}
Let us consider on the extended Dirichlet algebra, the functional $\o_G :\widetilde\B\to\mathbb{C}$ defined by the potential $G\in\P\cap \widetilde \B$:
\[
\o_G :\widetilde\B\to\mathbb{C}\qquad \o_G (b):=\big<G,b\big>_\F\, .
\]
Since the Dirichlet form is completely positive, the functional $\o_G$ is completely positive with respect to the cone $\P\subset\widetilde\B$. Therefore a Cauchy-Schwartz inequality holds true
\begin{equation}
|\o_G (b^*c)|^2\le \o_G (b^*b)\cdot\o_G (c^*c)\qquad b,c\in\widetilde\B\, .
\end{equation}
Hence we have
\begin{equation}
|\big< G,Gb\big>_\F|^2\le \big< G,G^2\big>_\F\cdot \big< G,b^*b\big>_\F\qquad b\in\widetilde\B
\end{equation}
and by Corollary \ref{corDeny} we have also
\begin{equation}
|\big< G,Gb\big>_\F|\le\|G\|_\M\cdot \|G\|_\F\cdot \|b\|_\F\qquad b\in\widetilde\B\, .
\end{equation}
Then we compute for $b\in\widetilde\B_+$
\begin{equation*}
\begin{split}
\Gamma[G](b)&=\big<\partial(G),\partial(G)b\big>_\H\\
&=\big<\partial(G),\partial(Gb)\big>_\H-\big<\partial(G),G\partial(b)\big>_\H\\
&=\E (G,Gb)-\big<G\partial(G),\partial(b)\big>_\H \\
&\le \langle G, Gb\rangle_\F +\|G\|_\M\sqrt{\E[G]}\cdot\sqrt{\E[b]}\qquad b\in\widetilde\B\\
&\le \langle G, Gb\rangle_\F +\|G\|_\M \|G\|_\F\cdot\|b\|_\F\\
&\le \|G\|_\M\cdot \|G\|_\F\cdot \|b\|_\F +\|G\|_\M \|G\|_\F\cdot\|b\|_\F\\
&= 2\|G\|_\M\cdot \|G\|_\F\cdot \|b\|_\F\\
\end{split}
\end{equation*}
which provides the result.

\end{proof}


\section{Multipliers of Dirichlet spaces}

We define in this section {\it multipliers of Dirichlet spaces} and, as a final application of the previous work, we prove their existence and a related approximation property.

\begin{defn}(Multipliers of a Dirichlet space)
An element $b\in\mathcal{M}$ is called a {\it multiplier} of the Dirichlet space $(\E ,\F)$ if
\[
b\xi\in\F \quad\text{ and }\quad\xi b\in\F\qquad \forall \,\, \xi\in \F\,.
\]
A direct application of the Closed-Graph Theorem implies that multipliers are bounded maps on the Dirichlet space $\F$ and form an involutive sub-algebra, denoted by $\M (\E,\F)$, of the algebra $\mathbb{B}(\F)$ of all bounded operators on $\F$.
\par\noindent
Notice that if the Dirichlet space contains the unit $1_\M\in\F$, then the multipliers algebra is a subalgebra of the extended Dirichlet algebra: $\M (\E,\F)\subseteq \widetilde{\B}$.
\end{defn}
We prove below that multipliers exist.

\medskip
\begin{prop}\label{critmolt}
Let $g\in\P\cap\M$ be a bounded potential and suppose that its carr\'{e} du champ $\Gamma [g]\in\C^*_+$ has a bounded potential $G(\Gamma[g])\in\P\cap\M$. Then $g$ is a multiplier of the Dirichlet space.
\end{prop}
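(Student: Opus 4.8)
The strategy is to bound the operators $\xi\mapsto g\xi$ and $\xi\mapsto\xi g$ on $\F$ by the graph norm, first on the extended Dirichlet algebra $\widetilde{\B}$ and then, by density, on all of $\F$. Fix $\xi\in\widetilde{\B}$. Since $\widetilde{\B}$ is an algebra, $g\xi\in\widetilde{\B}\subset\F$, and the Leibniz rule on $\widetilde{\B}$ (valid by Lemma~\ref{finecalculus}) together with the contractivity of the left action of $\C\supseteq\widetilde{\B}$ on $\H$ gives
\begin{equation*}
\sqrt{\E[g\xi]}=\norm{\partial(g\xi)}_\H\le\norm{(\partial g)\cdot\xi}_\H+\norm{g\cdot\partial\xi}_\H\le\norm{(\partial g)\cdot\xi}_\H+\norm{g}_\M\,\sqrt{\E[\xi]}\, .
\end{equation*}
The only delicate term is $\norm{(\partial g)\cdot\xi}_\H$, and this is exactly where the carr\'e du champ enters. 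Using the defining formula~\eqref{energyfunctional2} for $\Gamma[g]\in\C^*_+$, the module identity $(\partial g)\cdot(\xi\xi^*)=\bigl((\partial g)\cdot\xi\bigr)\cdot\xi^*$, and the fact that the right action $\eta\mapsto\eta\cdot a$ is adjoint to $\eta\mapsto\eta\cdot a^*$, one obtains
\begin{equation*}
\norm{(\partial g)\cdot\xi}_\H^2=\big<\partial g\, ,\,(\partial g)\cdot(\xi\xi^*)\big>_\H=\big<\Gamma[g]\, ,\,\xi\xi^*\big>_{\C^*,\C}\, .
\end{equation*}

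Next I invoke the hypothesis. By Proposition~\ref{boundedpotential} the functional $\Gamma[g]$ has finite energy, and by assumption its potential $G(\Gamma[g])$ lies in $\P\cap\M$. Since $\xi\xi^*\in\widetilde{\B}$, Proposition~\ref{omegatilde} yields $\big<\Gamma[g],\xi\xi^*\big>=\big<G(\Gamma[g]),\xi\xi^*\big>_\F$, and Corollary~\ref{corDeny}, applied to the bounded potential $G(\Gamma[g])$ with $b=\xi^*$, gives
\begin{equation*}
\norm{(\partial g)\cdot\xi}_\H^2=\big<G(\Gamma[g])\, ,\,\xi\xi^*\big>_\F\le\norm{G(\Gamma[g])}_\M\,\norm{\xi^*}_\F^2=\norm{G(\Gamma[g])}_\M\,\norm{\xi}_\F^2\, ,
\end{equation*}
the last step by reality of $\E$ (so $\norm{\xi^*}_\F=\norm{\xi}_\F$). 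Combining the two displays with the trivial bound $\norm{g\xi}_2\le\norm{g}_\M\norm{\xi}_2$ produces a constant $C$, depending only on $\norm{g}_\M$ and $\norm{G(\Gamma[g])}_\M$, such that $\norm{g\xi}_\F\le C\norm{\xi}_\F$ for all $\xi\in\widetilde{\B}$.

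For right multiplication note that potentials are positive by Proposition~\ref{positivity}, hence $g=g^*$; applying the symmetry $\J$ to $\partial(\xi g)=(\partial\xi)\cdot g+\xi\cdot(\partial g)$ gives $\norm{\xi\cdot(\partial g)}_\H=\norm{(\partial g)\cdot\xi^*}_\H$, so the estimate above (with $\xi$ replaced by $\xi^*$) again yields $\norm{\xi g}_\F\le C\norm{\xi}_\F$ on $\widetilde{\B}$. Finally, $\widetilde{\B}$ is dense in $\F$ and $\F$ embeds continuously into $L^2(A,\tau)$, on which multiplication by $g\in\M$ is bounded; hence if $\xi_n\in\widetilde{\B}$ converges to $\xi\in\F$ in $\F$, the sequence $g\xi_n$ is $\F$-Cauchy and its $\F$-limit must coincide with its $L^2$-limit $g\xi$, so $g\xi\in\F$, and likewise $\xi g\in\F$. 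Therefore $g\in\M(\E,\F)$.

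I expect the conceptual crux to be the identity $\norm{(\partial g)\cdot\xi}_\H^2=\big<\Gamma[g],\xi\xi^*\big>$ and the recognition that it is precisely here that the hypothesis on $G(\Gamma[g])$ is consumed: once $\Gamma[g]$ is known to be a finite-energy functional with bounded potential, Deny's embedding (through Corollary~\ref{corDeny}) converts the bimodule bound into a graph-norm bound. There is no deep technical obstacle beyond this, but one must be careful that the whole differential calculus---Leibniz rule, contractivity of the left and right actions, the symmetry $\J$---is available on $\widetilde{\B}$ and $\C$ and not merely on $\B$ and $A$; this is exactly the content of Lemma~\ref{finecalculus} and should be cited explicitly.
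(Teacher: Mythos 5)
Your proof is correct and follows essentially the same route as the paper's: the key identity $\norm{(\partial g)\cdot b}_\H^2=\big<\Gamma[g],bb^*\big>_{\C^*,\C}$, the bound via Corollary~\ref{corDeny} applied to the bounded potential $G(\Gamma[g])$, the Leibniz rule, and a density argument to pass from the (extended) Dirichlet algebra to all of $\F$. The only cosmetic differences are that you work on $\widetilde\B$ rather than $\B$ and that you spell out the right-multiplication case via the symmetry $\J$, which the paper leaves as "an analogous computation."
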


\begin{proof} Applying the generalized Deny embedding Theorem \ref{Deny} and Proposition \ref{boundedpotential}, we get, for $b\in\B$:
\begin{equation}\label{ineqmolt}\begin{split}
||(\partial g)b||_\H^2 &=\big<\Gamma [b],bb^*\big>_{\mathcal C^*,\mathcal C}\\
& \leq ||G(\Gamma [b])||_\M\,||b^*||_\F^2=||G(\Gamma [g]||_\M\,||b||_\F^2\,.
\end{split}\end{equation}
Hence
\[
||\partial (gb)||_\H=||\partial(g)b+g\partial(b)||_\H
\leq \big(||G(\Gamma[g])||_\M^{1/2}+ ||g||_\M\big)\,||b||_\F
\]
and then
\[
||gb||_\F^2 = ||\partial (gb)||_\H^2 + ||gb||_{L^2 (A,\tau)}^2
\leq \bigl[\bigl(||G(\Gamma[g])||_\M^{1/2}+ ||g||_\M\bigr)^2 + ||g||_\M^2\bigl]\,||b||_\F^2\, .
\]
Since the Dirichlet algebra $\B$ is a form core, for a fixed $b\in\F$ the exists a Cauchy net $\{b_i\in\B :i\in I\}$ converging to it in the norm of $\F$. The above bound implies that also $\{gb_i\in\B :i\in I\}\subset\F$ is a Cauchy net in $\F$, hence converging to an element $c\in\F$. Since $\F$ is continuously embedded in $L^2 (A,\tau)$, we have that $c=gb$. An analogous computation shows that $bg\in\F$ for all $b\in\F$ so hat $g$ is a multiplier of the Dirichlet space.
\end{proof}

Next result shows that the resolvent $(I+L)^{-1}$ are positivity preserving maps from the Hilbert algebra $L^2(A,\tau)\cap \M$ into the multipliers algebra $\M (\E,\F)$.

\begin{prop} Let $h\in L^2(A,\tau)\cap \M$. Then $g=(I+L)^{-1}h\in\M$ is a potential and a multiplier of the Dirichlet space $\F$.
\end{prop}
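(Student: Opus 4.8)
The plan is to deduce the statement from Proposition \ref{critmolt}, the real work being to exhibit the potential of the carr\'e du champ $\Gamma[g]$ explicitly and to check that it is bounded. First I would dispose of the easy points: since $(I+L)^{-1}$ maps $L^2(A,\tau)$ into $\F$ and acts as a norm contraction on $\M$, the element $g=(I+L)^{-1}h$ lies in $\F\cap\M=\widetilde\B$, and since $(I+L)^{-1}$ commutes with $J$ one has $g^*=g$ whenever $h=h^*$. When $h\ge 0$, the computation already used in Example 3.2 gives $\langle g,\eta\rangle_\F=\langle h,\eta\rangle_2=\tau(h\eta)\ge 0$ for every $\eta\in\F_+$, so $g\in\P\cap\M$ is a bounded potential; a general $h\in L^2(A,\tau)\cap\M$ splits into four such positive pieces, and since multipliers form a linear space it suffices, for the multiplier assertion, to treat the case $h\ge 0$, which I henceforth assume. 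By Proposition \ref{boundedpotential} the carr\'e du champ $\Gamma[g]\in\C^*_+$ is then a finite-energy functional, so it has a potential $G(\Gamma[g])\in\P$, and by Proposition \ref{critmolt} it is enough to prove that this potential is bounded.

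To identify $G(\Gamma[g])$ I would use that $g=g^*$, so that $\partial g=\J(\partial g)$ is $\J$-invariant; the intertwining properties of the symmetry $\J$ then give, for $b\in\widetilde\B$, both $\Gamma[g](b)=\langle\partial g,(\partial g)b\rangle_\H$ (the definition) and $\Gamma[g](b)=\langle\partial g,\,b\,\partial g\rangle_\H$. Averaging these two expressions and expanding each by the Leibniz rule on $\widetilde\B$ (Lemma \ref{finecalculus}) makes the one-sided multiplications cancel and leaves
\[
2\,\Gamma[g](b)=\E(g,\,gb+bg)-\E(g^2,b)\qquad b\in\widetilde\B .
\]
Now $g\in\D(L)$ with $Lg=h-g$, so $\E(g,gb+bg)=\langle h-g,\,gb+bg\rangle_2=\langle hg+gh-2g^2,\,b\rangle_2$, while $\E(g^2,b)=\langle g^2,b\rangle_\F-\langle g^2,b\rangle_2$ because $g^2\in\widetilde\B\subseteq\F$. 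Using the elementary identity $\langle\psi,b\rangle_2=\langle(I+L)^{-1}\psi,b\rangle_\F$, valid for $\psi\in L^2(A,\tau)$ and $b\in\F$, the right-hand side turns into $\big\langle (I+L)^{-1}(hg+gh-g^2)-g^2,\,b\big\rangle_\F$ for every $b\in\widetilde\B$, and since $\widetilde\B$ is dense in $\F$ this forces
\[
G(\Gamma[g])=\undemi\Big[(I+L)^{-1}\big(hg+gh-g^2\big)-g^2\Big].
\]
Both $hg+gh-g^2$ and $g^2$ lie in $\M$ and $(I+L)^{-1}$ contracts $\M$, hence $G(\Gamma[g])\in\F\cap\M$; being the potential of a finite-energy functional it also lies in $\P$, so $G(\Gamma[g])\in\P\cap\M$. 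Proposition \ref{critmolt} now applies and shows that $g$ is a multiplier of the Dirichlet space.

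The main obstacle is precisely the identification of $G(\Gamma[g])$: the $\|\cdot\|_\F$-estimate of Proposition \ref{boundedpotential} does not by itself yield a bounded potential. The device that produces the explicit formula is the symmetrization of $\Gamma[g](b)$ afforded by $g=g^*$ and the bimodule symmetry $\J$, which replaces the one-sided multiplication in the definition of the carr\'e du champ by the symmetric combination $gb+bg$; once this is done, the Leibniz rule together with $g\in\D(L)$ collapses everything into $L^2(A,\tau)$-pairings against the bounded elements $hg+gh-g^2$ and $g^2$. The remaining points — the self-adjointness of $hg+gh-2g^2$, used when it is paired with $b$, and the bookkeeping of noncommutative $L^p$-products in the trace manipulations — are routine.
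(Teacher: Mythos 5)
Your proof is correct and follows essentially the same route as the paper's: reduce to $h\ge 0$, observe that $g$ is a bounded potential, use the symmetry $\J$ together with $g=g^*$ to symmetrize the carr\'e du champ, apply the Leibniz rule on $\widetilde\B$ to exhibit an explicit bounded element of $\P\cap\M$ representing $\Gamma[g]$, and invoke Proposition \ref{critmolt}. The only divergence is in the explicit formula: you obtain $G(\Gamma[g])=\tfrac12\bigl[(I+L)^{-1}(hg+gh-g^2)-g^2\bigr]$ while the paper's computation yields $\tfrac12\bigl[(I+L)^{-1}(hg+gh)-g^2\bigr]$ (it silently replaces $\E(g,gb+bg)$ by $\tau(h(gb+bg))$, dropping the $-\tau(g(gb+bg))$ correction); your version is the accurate one, and in either case the potential is manifestly bounded, so the conclusion is unaffected.
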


\begin{proof} Without loss of generality, we may assume $h\in L^2 (A,\tau)_+\cap \M$. Since $\big<g,b\big>_\F=\tau((I+L)g\cdot b)=\tau(hb)$ for all $b\in \widetilde \B$, we have that $g$ is a bounded potential (see Example 3.2).
\par\noindent
Denoting by $\J$ the anti-unitary involution of $L^2 (A,\tau)$ determined by the self-polar cone $L^2 (A,\tau)$, since $g=g^*$, for all $b\in \widetilde \B$ we have
\[
\big<\partial g,(\partial g)b\big>_\H  = \big<\J ((\partial g)b),\J (\partial g)\big>_\H = \big<b^*(\partial g),\partial g\big>_\H  = \big<\partial g,b(\partial g)\big>_\H
\]
and then
\begin{equation*}\begin{split}
2\big<\Gamma[g],b\big>_{\mathcal C^*,\mathcal C}& =2\big<\partial g,(\partial g)b\big>_\H \\
&=2\big<\partial g,b(\partial g)\big>_\H \\
&=\big<\partial g,(\partial g)b+b(\partial g)\big>_\H \\
&=\big<\partial g,\partial( gb)+\partial(b g)-g(\partial b)-(\partial b)g\big>_\H \\
&=\tau\big(h(gb+b g)\big)-\big<g(\partial g)+(\partial g)g,\partial b\big>_\H \\
&=\tau\big((hg+gh)b \big)-\big<\partial g^2\,,\,\partial b\big>_\F\\
&=\big<(I+L)^{-1}(hg+gh)-g^2,b\big>_\F
\end{split}\end{equation*}
which provides that the positive linear functional $\Gamma[g]$ has a bounded potential $(I+L)^{-1}(hg+gh)-g^2\in\M$. Apply Proposition \ref{critmolt} to conclude.
\end{proof}

\begin{cor} Let $g$ be a bounded potential. 1. Then, for any $\e>0$, $(I+\e L)^{-1}g$ is a multiplier of the Dirichlet space $\F$. 2. Multipliers are dense in $\F$. 3. The algebra of multipliers is dense in fine C$^*$-algebra $\C$.

\end{cor}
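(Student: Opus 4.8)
The three assertions follow from one observation that extends the preceding Proposition slightly: \emph{for every $b\in L^2(A,\tau)\cap\M$ and every $\e>0$, the element $(I+\e L)^{-1}b$ is a potential and a multiplier of the Dirichlet space}. Indeed, since $b\in L^2(A,\tau)$ one has $(I+\e L)^{-1}b\in D(L)$, so that
\[
h_\e:=(I+L)(I+\e L)^{-1}b=\frac1\e\,b-\frac{1-\e}{\e}\,(I+\e L)^{-1}b
\]
is a well defined element of $L^2(A,\tau)$; moreover $b\in\M$ and $(I+\e L)^{-1}b\in\M$, the resolvent being a completely positive norm contraction of $\M$, so that $h_\e\in L^2(A,\tau)\cap\M$. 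Since $(I+L)^{-1}h_\e=(I+\e L)^{-1}b$, the preceding Proposition applied to $h_\e$ gives the claim. Part~1 is then immediate, taking $b=g\in\P\cap\M\subseteq\widetilde\B\subseteq L^2(A,\tau)\cap\M$.

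For Part~2 I apply this observation with $b$ running over the extended Dirichlet algebra $\widetilde\B$: each $(I+\e L)^{-1}b$ is then a multiplier lying in $\widetilde\B\subseteq\F$, and by the resolvent lemma of Section~2 it converges to $b$ in the graph norm of $\F$ as $\e\downarrow0$. Since $\widetilde\B$ contains the form core $\B$, it is dense in $\F$, and the multiplier algebra is therefore dense in $\F$.

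For Part~3 the same observation shows that $(I+\e L)^{-1}$ carries $\widetilde\B$ into the multiplier algebra $\M(\E,\F)$; being in addition a norm contraction of $\M$ that preserves $\widetilde\B$, it carries the whole fine C$^*$-algebra $\C=\overline{\widetilde\B}^{\,\|\cdot\|_\M}$ into the uniform closure of $\M(\E,\F)$. The remaining step is to pass $\e\downarrow0$ and deduce that $\C$ itself lies in that closure, and this is where the argument is genuinely delicate: on $L^2(A,\tau)$ and on $\F$ the resolvents converge strongly to the identity, but on $\M$ they are only $\sigma$-weakly continuous at $\e=0$, so uniform convergence $(I+\e L)^{-1}c\to c$ cannot be expected for every $c\in\C$. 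What is needed is a uniformly dense subalgebra of $\C$ consisting of multipliers and on which the resolvents do converge in uniform norm — for instance elements $c=(I+L)^{-1}b$ with $b\in\widetilde\B$, for which $Lc=b-(I+L)^{-1}b\in\M$ and hence $\|(I+\e L)^{-1}c-c\|_\M=\e\,\|(I+\e L)^{-1}Lc\|_\M\le\e\,\|Lc\|_\M\to0$. Thus the main obstacle in Part~3 is to produce such an algebra and to show it is uniformly dense in $\C$, which is precisely where the regularity of $(\E,\F)$ with respect to the fine algebra $\C$ (Lemma~\ref{finecalculus}) must be invoked; Parts~1 and~2, by contrast, are routine consequences of the observation above together with the strong convergence of the resolvents on $\F$.
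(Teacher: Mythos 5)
Your treatment of parts 1 and 2 is correct and is essentially the paper's argument with the missing details supplied: the paper's proof is the single line ``apply the previous proposition and Lemma 2.3'', and your identity $(I+\e L)^{-1}b=(I+L)^{-1}h_\e$ with
$h_\e=\frac{1}{\e}\,b-\frac{1-\e}{\e}\,(I+\e L)^{-1}b\in L^2(A,\tau)\cap\M$
is exactly the reduction to the preceding Proposition that this line presupposes, while the strong convergence $(I+\e L)^{-1}b\to b$ in $\F$ from Lemma 2.3, applied to $b$ in the extended Dirichlet algebra $\widetilde\B$ (which contains the form core $\B$), gives density of the multipliers in $\F$.

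Part 3, however, is not proved in your proposal. You correctly observe that the resolvents do not converge to the identity in the uniform norm of $\M$, you exhibit a subclass $c=(I+L)^{-1}b$, $b\in\widetilde\B$, on which they do converge uniformly (and which in any case already consists of multipliers by the preceding Proposition, so no approximation is needed there), and then you stop at the remark that one must show such elements form a uniformly dense subalgebra of $\C$. That last step is the entire content of part 3: since $\C$ is by definition the $\Vert\cdot\Vert_\M$-closure of $\widetilde\B$, the assertion to be proved is precisely that $(I+L)^{-1}\bigl(L^2(A,\tau)\cap\M\bigr)$, or some other algebra of multipliers, is uniformly dense in $\widetilde\B$, and neither your appeal to Lemma \ref{finecalculus} nor anything else in the proposal establishes this. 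To be fair, the paper's own proof is equally silent on the point: Lemma 2.3 only provides strong convergence of the resolvents on $\F$ and $\sigma$-weak continuity on $\M$, which suffices for parts 1 and 2 but not for approximation in the uniform norm of $\C$. So your diagnosis of where the difficulty lies is accurate and is a fair criticism of the text, but as a proof of part 3 the proposal has a genuine gap.
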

\begin{proof} Apply the previous corollary and Lemma 2.3.
\end{proof}

\newpage
\normalsize
\begin{center} \bf REFERENCES\end{center}

\normalsize
\begin{enumerate}

\bibitem[AHK]{AHK} S. Albeverio, R. Hoegh-Krohn, \newblock{Dirichlet
Forms and Markovian semigroups on C$^*$--algebras},
\newblock{\it Comm. Math. Phys.} {\bf 56} {\rm (1977)}, 173-187.

\bibitem[Ara]{Ara} H. Araki, \newblock{Some properties of modular conjugation operator of von Neumann algebras and a non-commutative Radon-Nikodym theorem with a chain rule},
\newblock{\it Pacific J. Math.} {\bf 50} {\rm (1974)}, 309-354.

\bibitem[BeDe1]{BeDe1} A. Beurling and J. Deny, \newblock{Espaces de Dirichlet
I: le cas \'el\'ementaire},
\newblock{\it Acta Math.} {\bf 99} {\rm (1958)}, 203-224.

\bibitem[BeDe2]{BeDe2} A. Beurling and J. Deny, \newblock{Dirichlet spaces},
\newblock{\it Proc. Nat. Acad. Sci.} {\bf 45} {\rm (1959)}, 208-215.

\bibitem[Bi]{Bi} P. Biane, \newblock{Logarithmic Sobolev Inequalities, Matrix Models and Free Entropy},
\newblock{\it Acta Math. Sinica, English Series.} {\bf 19} {\rm (2003)}, 497-506.

\bibitem[Boz]{Boz} M. Bozejko, \newblock{Positive definite functions on the
free group and the noncommutative Riesz product},
\newblock{\it Bollettino U.M.I.} {\bf 5-A} {\rm (1986)}, 13-21.

\bibitem[Bre]{Bre}M. Brelot, \newblock{La theorie moderne du potentiel},
\newblock{\it Ann. Inst. Fourier Grenoble} {\bf 4} {\rm (1952)}, 113-140.

\bibitem[Ca]{Ca} H. Cartan, \newblock{Sur les fondements de la théorie du potentiel},
\newblock{\it Bull. Soc. Math. France} {\bf 69} {\rm (1941)}, 71--96.

\bibitem[CCJJV]{CCJJV} P.-A. Cherix, M. Cowling, P. Jolissaint, P. Julg, A. Valette,
\newblock{``Groups with the Haagerup property. Gromov's a-T-menability''},
\newblock{Progress in Mathematics, 197, Birkhäuser Verlag, Basel, 2001}

\bibitem[C1]{C1} F. Cipriani, \newblock{Dirichlet forms and Markovian semigroups on standard forms of von Neumann algebras},
\newblock{\it J. Funct. Anal.} {\bf 147} {\rm (1997)}, no. 1, 259--300.

\bibitem[C2]{C2} F. Cipriani, \newblock{``Dirichlet forms on Noncommutative spaces''},
\newblock{Springer ed. L.N.M. 1954, 2007}.

\bibitem[C3]{C3} F. Cipriani, \newblock{The variational approach to the
Dirichlet problem in C$^*$--algebras},
\newblock{\it Banach Center Publications} {\bf 43} {\rm (1998)}, 259-300.

\bibitem[CFK]{CFK} F. Cipriani, U. Franz, A. Kula, \newblock{Symmetry properties of quantum MArkov semigroups on Compact Quantum Groups},
\newblock{\it In preparation}.

\bibitem[CS1]{CS1} F. Cipriani, J.-L. Sauvageot, \newblock{Derivations as square
roots of Dirichlet forms},
\newblock{\it J. Funct. Anal.} {\bf 201} {\rm (2003)}, no. 1, 78--120.

\bibitem[CS2]{CS2} F. Cipriani, J.-L. Sauvageot,  \newblock{Strong solutions to the Dirichlet problem for differential forms: a quantum dynamical semigroup approach},
\newblock{\it Contemp. Math, Amer. Math. Soc., Providence, RI} {\bf 335} {\rm (2003)}, 109-117.

\bibitem[CS3]{CS3} F. Cipriani, J.-L. Sauvageot, \newblock{Noncommutative potential theory and
the sign of the curvature operator in Riemannian geometry},
\newblock{\it Geom. Funct. Anal.} {\bf 13} {\rm (2003)}, no. 3, 521--545.

\bibitem[CGIS1]{CGIS1} F. Cipriani, D. Guido, T. Isola, J.-L. Sauvageot,  \newblock{``Differential 1-forms, their Integrals and Potential Theory on the Sierpinski Gasket ''},
\newblock{arXiv:1105.1995, 2011}.

\bibitem[CGIS2]{CGIS2} F. Cipriani, D. Guido, T. Isola, J.-L. Sauvageot,  \newblock{``Spectral triples for the Sierpinski Gasket''},
\newblock{arXiv:1112.6401, 2011}.

\bibitem[Co]{Co} A. Connes, \newblock{``Noncommutative Geometry''},
\newblock{Academic Press, 1994}.

\bibitem[CoTr]{CoTr} A. Connes, P. Tretkoff, \newblock{The Gauss-Bonnet Theorem
for the noncommutative two torus},
\newblock{Noncommutative geometry, arithmetic, and related topics, Johns Hopkins Univ. Press, Baltimore, MD,, 2011}.

\bibitem[Da]{Da} Y. Dabrowski, \newblock{A note about proving non-Ã under a finite
non-microstates free Fisher information assumption},
\newblock{\it Math. Z.} {\bf 258} {\rm (2010)}, 3662-3674.

\bibitem[DL]{DL} E.B. Davies, J.M. Lindsay, \newblock{Non--commutative
symmetric Markov semigroups},
\newblock{\it Math. Z.} {\bf 210} {\rm (1992)}, 379-411.

\bibitem[DR1]{DR1} E.B. Davies, O.S. Rothaus, \newblock{Markov semigroups on C$^*$--bundles},
\newblock{\it J. Funct. Anal.} {\bf 85} {\rm (1989)}, 264-286.

\bibitem[DR2]{DR2} E.B. Davies, O.S. Rothaus, \newblock{A BLW inequality for vector bundles and applications to spectral bounds},
\newblock{\it J. Funct. Anal.} {\bf 86} {\rm (1989)}, 390-410.

\bibitem[deH]{deH} P. de la Harpe, \newblock{``Topics in Geometric Group Theory''},
\newblock{Chicago Lectures in Mathematics, The University of Chicago Press, 2000}.

\bibitem[Den]{Den} J. Deny, \newblock{M\'ethodes hilbertien en théorie du potentiel},
\newblock{\it Potential Theory (C.I.M.E., I Ciclo, Stresa)}, \newblock{Ed. Cremonese Roma, 1970}. {\bf 85}, 121-201.

\bibitem[Dix]{Dix} J. Dixmier, \newblock{``Les C$^*$--alg\`ebres et leurs
repr\'esentations''}, \newblock{Gauthier--Villars, Paris, 1969}.

\bibitem[Do]{Do} J.L. Doob, \newblock{``Classical potential theory and its probabilistic counterpart''},
\newblock{Springer-Verlag, New York, 1984}.

\bibitem[F1]{F1} M. Fukushima, \newblock{Regular representations of Dirichlet spaces},
\newblock{\it Trans. Amer. Math. Soc.} {\bf 155} {\rm (1971)}, 455-473.

\bibitem[F2]{F2} M. Fukushima, \newblock{Dirichlet spaces and strong Markov processes},
\newblock{\it Trans. Amer. Math. Soc.} {\bf 162} {\rm (1971)}, 185-224.

\bibitem[FOT]{FOT} M. Fukushima, Y. Oshima, M. Takeda,
\newblock{``Dirichlet Forms and Symmetric Markov Processes''},
\newblock{de Gruyter Studies in Mathematics, 1994}.

\bibitem[G1]{G1} L. Gross, \newblock{Existence and uniqueness of physical
ground states},
\newblock{\it J. Funct. Anal.} {\bf 10} {\rm (1972)}, 59-109.

\bibitem[G2]{G2} L. Gross, \newblock{Hypercontractivity and logarithmic
Sobolev inequalities for the Clifford--Dirichlet form},
\newblock{\it Duke Math. J.} {\bf 42} {\rm (1975)}, 383-396.

\bibitem[Haa1]{Haa1} U. Haagerup, \newblock{An example of a nonnuclear C$^*$-algebra, which has the metric approximation property},
\newblock{\it Invent. Math.} {\bf 50} {\rm (1978)}, no. 3, 279-293.

\bibitem[Haa2]{Haa2} U. Haagerup, \newblock{Operator-valued weights in von Neumann algebras. I},
\newblock{\it J. Funct. Anal.} {\bf 32} {\rm (1979)}, no. 2, 175-206.

\bibitem[LJ]{LJ} Y. Le Jan, \newblock{Mesures associ\'es a une forme de Dirichlet. Applications.},
\newblock{\it Bull. Soc. Math. France} {\bf 106} {\rm (1978)}, 61-112.

\bibitem[Moko]{Moko} G. Mokobodzki, \newblock{Fermabilit\'e des formes de Dirichlet et in\'egalit\'e de type Poincar\'e},
\newblock{\it Pot. Anal.} {\bf 4} {\rm (1995)}, 409-413.


\bibitem[Pe1]{Pe1} J. Peterson, \newblock{$L^2$-rigidity in von Neumann algebras},
\newblock{\it Invent. Math.} {\bf 175} {\rm (2009)}, 417-433.

\bibitem[Pe2]{Pe2} J. Peterson, \newblock{A 1-cohomology characterization of property (T) in von Neumann algebras},
\newblock{\it Pacific J. Math.} {\bf 243} {\rm (2009)}, no. 1, 181-199.

\bibitem[S1]{S1} J.-L. Sauvageot, \newblock{Tangent bimodule and locality
for dissipative operators on C$^*$--algebras, Quantum Probability
and Applications IV}, \newblock{\it Lecture Notes in Math.} {\bf 1396} {\rm (1989)}, 322-338.

\bibitem[S2]{S2} J.-L. Sauvageot, \newblock{Quantum Dirichlet forms,
differential calculus and semigroups,  Quantum Probability and Applications V},
\newblock{\it Lecture Notes in Math.} {\bf 1442} {\rm (1990)}, 334-346.

\bibitem[S3]{S3} J.-L. Sauvageot, \newblock{Semi--groupe de la chaleur
transverse sur la C$^*$--alg\`ebre d'un feulleitage riemannien},
\newblock{\it C.R. Acad. Sci. Paris S\'er. I Math.} {\bf 310} {\rm (1990)}, 531-536.

\bibitem[S4]{S4} J.-L. Sauvageot, \newblock{Semi--groupe de la chaleur transverse sur la C$^*$--alg\`ebre d'un feulleitage riemannien},
\newblock{\it J. Funct. Anal.} {\bf 142} {\rm (1996)}, 511-538.

\bibitem[V1]{V1} D.V. Voiculescu, \newblock{Lectures on Free Probability theory.}, \newblock{\it Lecture Notes in Math.} {\bf 1738} {\rm (2000)}, 279-349.

\bibitem[V2]{V2} D.V. Voiculescu, \newblock{The analogues of entropy and of Fisher's information measure in free probability theory},
\newblock{\it Invent. Math.} {\bf 132} {\rm (1998)}, 189-227.

\bibitem[V3]{V3} D.V. Voiculescu, \newblock{Almost Normal Operators mod Hilbert–Schmidt and the K-theory of the Algebras $E\Lambda (\Omega)$},
\newblock{\it arXiv:1112.4930}.

\end{enumerate}
\end{document}